
\documentclass{article}

\usepackage{hyperref}
\usepackage{amsfonts,amsmath,amssymb,amscd,amsthm}
\usepackage{latexsym}
\usepackage{undertilde}
\usepackage[all]{xy}
\usepackage{graphicx}
\usepackage{epsfig}

\input{epsf}


\newcommand{\Cat}{{\rm Cat}}

\newcommand{\Vect}{{\rm Vect}}
\newcommand{\Rep}{{\rm Rep}}
\newcommand{\SES}{{\rm SES}}
\newcommand{\FinVect}{{\rm FinVect}}

\newcommand{\Span}{{\rm Span}}



\renewcommand{\hom}{{\rm hom}}
\newcommand{\To}{\Rightarrow}
\renewcommand{\to}{\rightarrow}
\newcommand{\tensor}{\otimes}

\newcommand{\maps}{\colon}


\newcommand{\iso}{\cong}

\newcommand{\tr}{{\rm tr}}

\newcommand{\SL}{\mathrm{SL}}
\newcommand{\g}{{\mathfrak g}}

\newcommand{\R}{{\mathbb R}}
\newcommand{\C}{{\mathbb C}}
\newcommand{\F}{{\mathbb F}}
\newcommand{\N}{{\mathbb N}}
\newcommand{\Q}{{\mathbb Q}}
\newcommand{\Z}{{\mathbb Z}}
\newcommand{\U}{{\rm U}}

\newcommand{\Aut}{{\rm Aut}}

\newcommand{\ip}[2]{\langle #1,#2 \rangle}
\newcommand{\Over}{/\!/}
\newcommand{\w}{\! \colon \!}

\newcommand{\Ob}{{\rm Ob}}
\newcommand{\Mor}{{\rm Mor}}
\newcommand{\comment}[1]{}
\renewcommand{\u}[1]{\underline{#1}}

\newtheorem{thm}{Theorem}
\newtheorem{definition}[thm]{Definition}
\newtheorem{lemma}[thm]{Lemma}

\newtheorem{proposition}[thm]{Proposition}
\newtheorem{example}[thm]{Example}

\newtheorem*{theorem*}{Theorem}
\newtheorem*{definition*}{Definition}
\newtheorem*{corollary*}{Corollary}
\newtheorem*{proposition*}{Proposition}
\newtheorem*{example*}{Example}

\newdir{ >}{{}*!/-7pt/@{>}}
\newdir{> >}{*!/0pt/@{>}*!/-5pt/@{>}}

\begin{document}
\title{Higher Dimensional Algebra VII: Groupoidification}
\author{John C.\ Baez, Alexander E.\ Hoffnung, and Christopher D.\ Walker\\
       Department of Mathematics, University of California\\
       Riverside, CA 92521 USA}
\date{August 29, 2009}
\maketitle

\begin{abstract}
\noindent
Groupoidification is a form of categorification in which vector spaces
are replaced by groupoids and linear operators are replaced by spans
of groupoids.  We introduce this idea with a detailed exposition of
`degroupoidification': a systematic process that turns groupoids and
spans into vector spaces and linear operators.  Then we present three
applications of groupoidification.  The first is to Feynman diagrams.
The Hilbert space for the quantum harmonic oscillator arises naturally
from degroupoidifying the groupoid of finite sets and bijections.
This allows for a purely combinatorial interpretation of creation and
annihilation operators, their commutation relations, field operators,
their normal-ordered powers, and finally Feynman diagrams.  The second
application is to Hecke algebras.  We explain how to groupoidify the
Hecke algebra associated to a Dynkin diagram whenever the deformation
parameter $q$ is a prime power.  We illustrate this with the simplest
nontrivial example, coming from the $A_2$ Dynkin diagram.  In this
example we show that the solution of the Yang--Baxter equation built
into the $A_2$ Hecke algebra arises naturally from the axioms of
projective geometry applied to the projective plane over the finite
field $\mathbb{F}_q$.  The third application is to Hall algebras.  We
explain how the standard construction of the Hall algebra from the
category of $\mathbb{F}_q$ representations of a simply-laced quiver
can be seen as an example of degroupoidification.  This in turn
provides a new way to categorify---or more precisely, groupoidify---the 
positive part of the quantum group associated to the quiver.
\end{abstract}

\section{Introduction}\label{intro}

`Groupoidification' is an attempt to expose the combinatorial
underpinnings of linear algebra---the hard bones of set theory
underlying the flexibility of the continuum.  One of the main lessons
of modern algebra is to avoid choosing bases for vector spaces until
you need them.  As Hermann Weyl wrote, ``The introduction of a
coordinate system to geometry is an act of violence".  But vector
spaces often come equipped with a natural basis---and when this
happens, there is no harm in taking advantage of it.  The most obvious
example is when our vector space has been defined to consist of formal
linear combinations of the elements of some set.  Then this set is our
basis.  But surprisingly often, the elements of this set are {\it
isomorphism classes of objects in some groupoid}.  This is when
groupoidification can be useful.  It lets us work directly with the
groupoid, using tools analogous to those of linear algebra, without
bringing in the real numbers (or any other ground field).

For example, let $E$ be the groupoid of finite sets and bijections.
An isomorphism class of finite sets is just a natural number, so the
set of isomorphism classes of objects in $E$ can be identified with
$\N$.  Indeed, this is why natural numbers were invented in the first
place: to count finite sets.  The real vector space with $\N$ as basis
is usually identified with the polynomial algebra $\R[z]$, since that
has basis $z^0, z^1, z^2, \dots.$ Alternatively, we can work with {\it
infinite} formal linear combinations of natural numbers, which form
the algebra of formal power series, $\R[[z]]$.  So, the vector space
of formal power series is a kind of stand-in for the groupoid of
finite sets.

Indeed, formal power series have long been used as `generating
functions' in combinatorics \cite{Wilf}.  Given any combinatorial
structure we can put on finite sets, its generating function is the
formal power series whose $n$th coefficient says how many ways we can
put this structure on an $n$-element set.  Andr\'e Joyal formalized
the idea of `a structure we can put on finite sets' in terms of {\it
esp\`eces de structures}, or `structure types' \cite{BLL,Joyal,
Joyal2}.  Later his work was generalized to `stuff types'
\cite{BaezDolan:2001}, which are a key example of groupoidification.

Heuristically, a stuff type is a way of equipping finite sets with a
specific type of extra stuff---for example a 2-coloring, or a linear
ordering, or an additional finite set.  Stuff types have generating
functions, which are formal power series.  Combinatorially interesting
operations on stuff types correspond to interesting operations on their
generating functions: addition, multiplication, differentiation, and
so on.  Joyal's great idea amounts to this: {\it work directly with
stuff types as much as possible, and put off taking their generating
functions.}  As we shall see, this is an example of groupoidification.

To see how this works, we should be more precise.  A {\bf stuff type}
is a groupoid over the groupoid of finite sets: that is, a groupoid
$\Psi$ equipped with a functor $v \maps \Psi \to E$.  The reason for
the funny name is that we can think of $\Psi$ as a groupoid of finite
sets `equipped with extra stuff'.  The functor $v$ is then the
`forgetful functor' that forgets this extra stuff and gives the
underlying set.

The generating function of a stuff type $v \maps \Psi \to E$ is the
formal power series
\begin{equation}
\label{eq:generating_function}
       \utilde{\Psi}(z) = \sum_{n = 0}^\infty |v^{-1}(n)| \, z^n .
\end{equation}
Here $v^{-1}(n)$ is the `full inverse image' of any $n$-element
set, say $n \in E$.  We define this term later, but the idea is
straightforward: $v^{-1}(n)$ is the groupoid of $n$-element sets
equipped with the given type of stuff.  The $n$th coefficient of the
generating function measures the size of this groupoid.  

But how?  Here we need the concept of {\it groupoid cardinality}.
It seems this concept first appeared in algebraic geometry
\cite{Behrends:1993, Kim:1995}.  We rediscovered it by
pondering the meaning of division \cite{BaezDolan:2001}.  Addition of
natural numbers comes from disjoint union of finite sets, since
\[        |S + T| = |S| + |T|  .\]
Multiplication comes from cartesian product:
\[        |S \times T| = |S| \times |T|  .\]
But what about division?

If a group $G$ acts on a set $S$, we can `divide' the set by the group
and form the quotient $S/G$.  If $S$ and $G$ are finite and $G$ acts
freely on $S$, $S/G$ really deserves the name `quotient', since then
\[          |S/G| = |S|/|G|.   \]
Indeed, this fact captures some of our naive intuitions about division.
For example, why is $6/2 = 3$?  We can take a 6-element
set $S$ with a free action of the group $G = \Z/2$ and 
construct the set of orbits $S/G$:

\medskip
\centerline{\epsfysize=1.0in\epsfbox{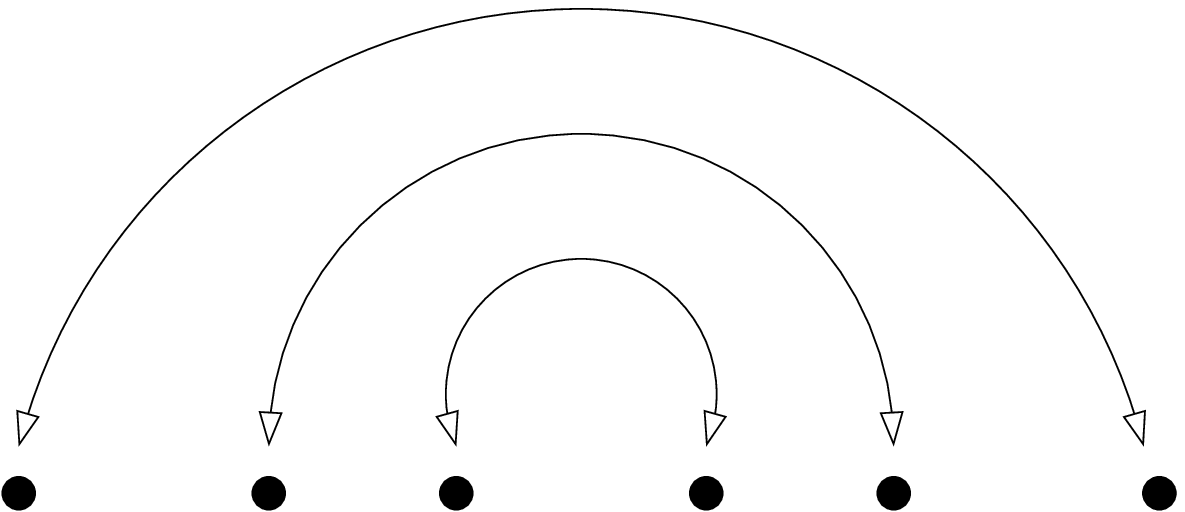}}
\medskip

\noindent
Since we are `folding the 6-element set in half', we get $|S/G| = 3$.

The trouble starts when the action of $G$ on $S$ fails to
be free.  Let's try the same trick starting with a 5-element set:

\medskip
\centerline{\epsfysize=1.0in\epsfbox{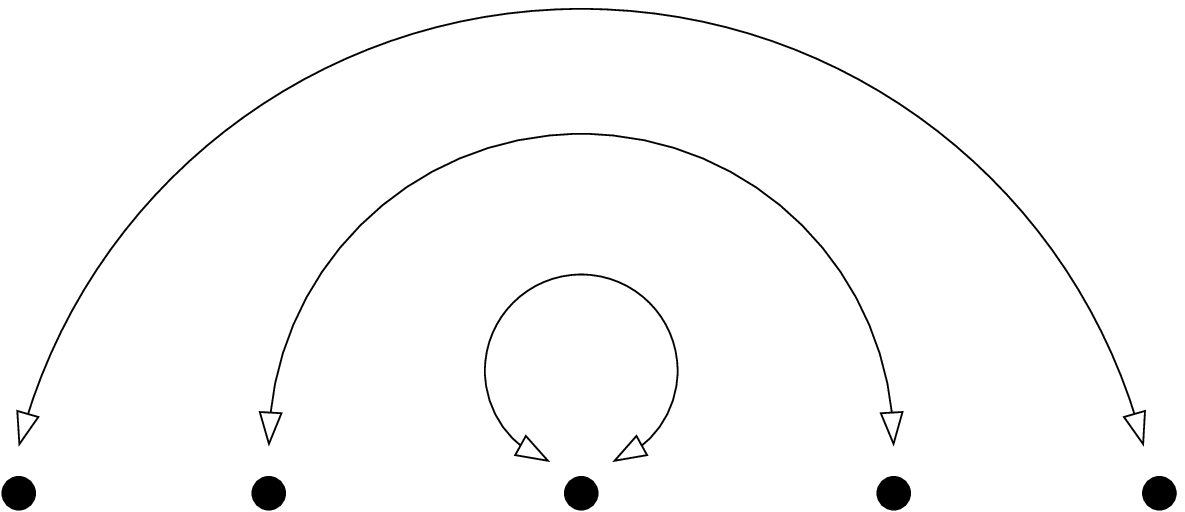}}
\medskip

\noindent
We don't obtain a set with $2\frac{1}{2}$ elements!  The reason is
that the point in the middle gets mapped to itself.  To get the
desired cardinality $2 \frac{1}{2}$, we would need a way to count this
point as `folded in half'.

To do this, we should first replace the ordinary quotient $S/G$ by the
`action groupoid' or \textbf{weak quotient} $S/\!/G$.  This is the
groupoid where objects are elements of $S$, and a morphism from $s \in
S$ to $s' \in S$ is an element $g \in G$ with $gs = s'$.  Composition
of morphisms works in the obvious way.  Next, we should define the
`cardinality' of a groupoid as follows.  For each isomorphism class of
objects, pick a representative $x$ and compute the reciprocal of the
number of automorphisms of this object; then sum the result over
isomorphism classes.  In other words, define the \textbf{cardinality} of
a groupoid $X$ to be
\begin{equation}
\label{eq:groupoid_cardinality}
      |X| = \sum_{\rm isomorphism\; classes \; of \; objects\; [x]}
\frac{1}{|\Aut(x)|} \;. 
\end{equation}
With these definitions, our problematic example gives a groupoid
$S/\!/G$ with cardinality $2\frac{1}{2}$, since the point in the
middle of the picture gets counted as `half a point'.  In fact,
\[       |S/\!/G| = |S|/|G|    \]
whenever $G$ is a finite group acting on a finite set $S$.  

The concept of groupoid cardinality gives an elegant definition of the
generating function of a stuff type---Equation
\ref{eq:generating_function}---which matches the usual `exponential
generating function' from combinatorics.  For the details of how this
works, see Example \ref{generating_function}.

Even better, we can vastly generalize the notion of generating
function, by replacing $E$ with an arbitrary groupoid.  For any
groupoid $X$ we get a vector space: namely $\R^{\underline{X}}$, the
space of functions $\psi \maps \underline{X} \to \R$, where
$\underline{X}$ is the set of isomorphism classes of objects in $X$.
Any sufficiently nice groupoid over $X$ gives a vector in this
vector space.

The question then arises: what about linear operators?  Here it is 
good to take a lesson from Heisenberg's matrix mechanics.  In his
early work on quantum mechanics, Heisenberg did not know about matrices.
He reinvented them based on this idea: a matrix $S$ can describe
a quantum process by letting the matrix entry $S_i^j \in \C$ stand
for the `amplitude' for a system to undergo a transition from its 
$i$th state to its $j$th state.  

The meaning of complex amplitudes was somewhat mysterious---and indeed
it remains so, much as we have become accustomed to it.  However, the
mystery evaporates if we have a matrix whose entries are natural
numbers.  Then the matrix entry $S_i^j \in \N$ simply counts the {\it
number of ways} for the system to undergo a transition from its $i$th
state to its $j$th state.

Indeed, let $X$ be a set whose elements are possible `initial states'
for some system, and let $Y$ be a set whose elements are possible
`final states'.  Suppose $S$ is a set equipped with maps to $X$ and
$Y$:
\[\xymatrix{
 & S\ar[dl]_q\ar[dr]^p & \\
 Y & & X \\
}\]
Mathematically, we call this setup a span of sets.  Physically,
we can think of $S$ as a set of possible `events'.  Points in $S$ sitting 
over $i \in X$ and $j \in Y$ form a subset
\[        S_i^j = \{ s \colon \; q(s) = j, \; p(s) = i \}  . \]
We can think of this as the {\it set of ways} for the system to undergo a 
transition from its $i$th state to its $j$th state.  
Indeed, we can picture $S$ more vividly as a matrix of sets: \\ \\

\[
\begin{picture}(230,125)(0,25)
 \includegraphics[scale=0.5]{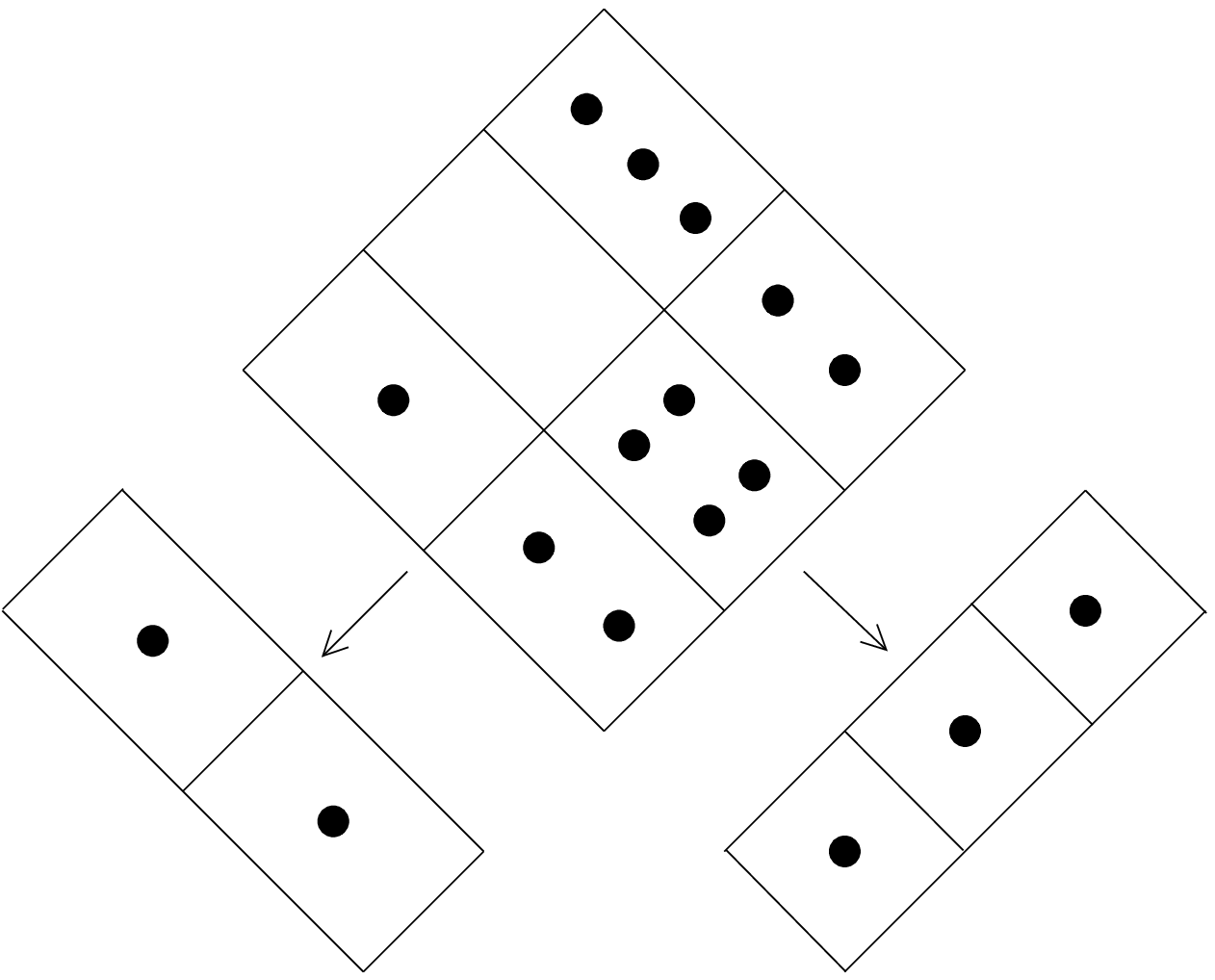}
 \put(-138,88){$q$}
 \put(-52,88){$p$}
 \put(-8,98){$X$}
 \put(-180,98){$Y$}
 \put(-80,170){$S$}
\end{picture}
\]
\noindent
If all the sets $S_i^j$ are finite, we get a matrix of natural
numbers $|S_i^j|$.

Of course, matrices of natural numbers only allow us to do a limited
portion of linear algebra.  We can go further if we consider, not spans
of sets, but {\it spans of groupoids}.  We can picture one of
these roughly as follows:

\[
\begin{picture}(230,145)(0,30)
 \includegraphics[scale=0.5]{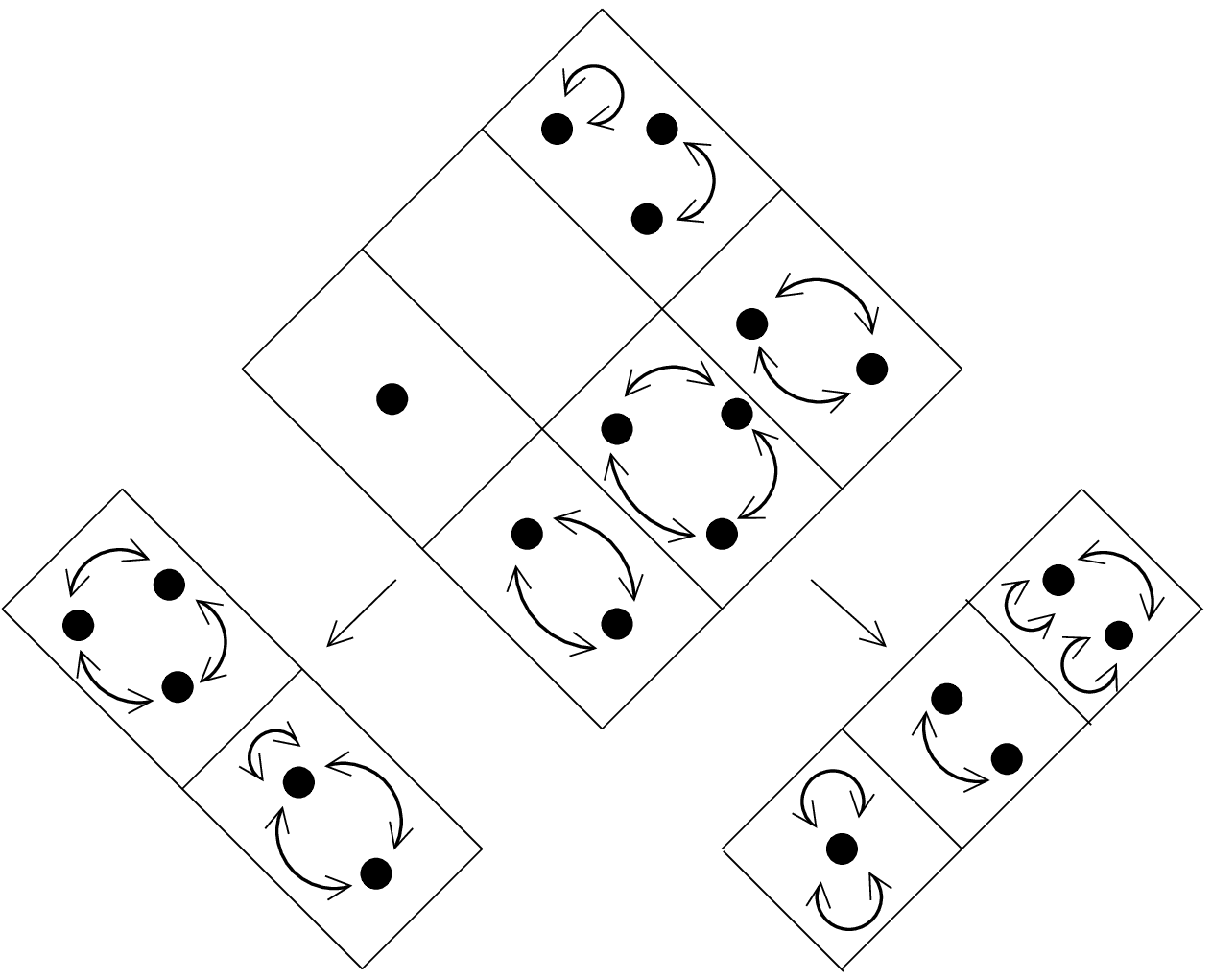}
 \put(-138,88){$q$}
 \put(-52,88){$p$}
 \put(-8,98){$X$}
 \put(-180,98){$Y$}
 \put(-80,170){$S$}
\end{picture}
\]
\noindent
If a span of groupoids is sufficiently nice---our technical term
will be `tame'---we can convert it into a linear operator from
$\R^{\underline X}$ to $\R^{\underline Y}$.  Viewed as a matrix, this
operator will have nonnegative real matrix entries.  So, we have not
succeeded in `groupoidifying' full-fledged quantum mechanics, where
the matrices can be complex.  Still, we have made some progress.

As a sign of this, it turns out that any groupoid $X$ gives not just a
vector space $\R^{\underline X}$, but a real Hilbert space $L^2(X)$.
If $X = E$, the complexification of this Hilbert space is the Hilbert
space of the quantum harmonic oscillator.  The quantum harmonic
oscillator is the simplest system where we can see the usual tools of
quantum field theory at work: for example, Feynman diagrams.  It turns
out that large portions of the theory of Feynman diagrams can be done
with spans of groupoids replacing operators \cite{BaezDolan:2001}.
The combinatorics of these diagrams then becomes vivid, stripped bare
of the trappings of analysis.  We sketch how this works in Section
\ref{feynman}.  A more detailed treatment can be found in the work of
Jeffrey Morton \cite{Morton:2006}.

To get complex numbers into the game, Morton generalizes groupoids to
`groupoids over $\U(1)$': that is, groupoids $X$ equipped with
functors $v \maps X \to \U(1)$, where $\U(1)$ is the
groupoid with unit complex numbers as objects and only identity 
morphisms.  The cardinality of a groupoid over $\U(1)$ can be complex.

Other generalizations of groupoid cardinality are also interesting.
For example, Leinster has generalized it to categories
\cite{Leinster:2008}.  The cardinality of a category can be negative!
More recently, Weinstein has generalized the concept of cardinality 
to Lie groupoids \cite{Weinstein:2008}.  Getting a useful generalization 
of groupoids for which the cardinality is naturally complex, without 
putting in the complex numbers `by hand', remains an elusive goal.  
However, the work of Fiore and Leinster suggests that it is possible
\cite{FioreLeinster:2005}.

In the last few years James Dolan, Todd Trimble and the authors have
applied groupoidification to better understand the process of
`$q$-deformation' \cite{Baez}.  Many important algebraic structures
can be systematically deformed in a way that depends on a parameter
$q$, with $q = 1$ being the `default' case of no deformation at all.
A beautiful story has begun to emerge in which $q$-deformation arises
naturally from replacing the groupoid of pointed finite sets by the
groupoid of finite-dimensional vector spaces over the field with $q$
elements, $\F_q$, where $q$ is a prime power.  We hope to write up
this material and develop it further in the years to come.  This paper
is just a first installment.

For example, any Dynkin diagram with $n$ dots gives rise to a finite
group of linear transformations of $\R^n$ which is generated by
reflections, one for each dot of the Dynkin diagram, which satisfy
some relations, one for each edge.  These groups are called `Coxeter
groups' \cite{Humphreys}.  The simplest example is the symmetry group 
of the regular $n$-simplex, which arises from a Dynkin diagram with
$n$ dots in a row, like this:
\[\xymatrix{ \bullet\ar@{-}[r] & \bullet \ar@{-}[r] & \bullet}\]
This group is generated by $n$ reflections $s_d$, one for each dot
$d$.  These generators obey the Yang--Baxter equation:
\[  s_d s_{d'} s_d = s_{d'} s_d s_{d'} \]
when the dots $d$ and $d'$ are connected by an edge, and they commute
otherwise.  Indeed the symmetry group of the regular $n$-simplex is
just the symmetric group $S_{n+1}$, which acts as permutations of the
vertices, and the generator $s_d$ is the transposition that switches
the $d$th and $(d+1)$st vertices.

Coxeter groups are a rich and fascinating subject, and part of their
charm comes from the fact that the group algebra of any Coxeter group
admits a $q$-deformation, called a `Hecke algebra', which has many of
the properties of the original group (as expressed through its group
algebra).  The Hecke algebra again has one generator for each dot of
the Dynkin diagram, now called $\sigma_d$.  These generators obey
the same relation for each edge that we had in the original Coxeter
group.  The only difference is that while the Coxeter group generators
are reflections, and thus satisfy $s_d^2 = 1$, the Hecke algebra
generators obey a $q$-deformed version of this equation:
\[ \sigma_d^2 = (q-1)\sigma_d + q  . \]

Where do Hecke algebras come from?  They arise in a number of ways,
but one enlightening description involves the theory of `buildings',
where each Dynkin diagram corresponds to a type of geometry
\cite{Brown,Garrett}.  For example, the Dynkin diagram shown above
(with $n$ dots) corresponds to the geometry of projective $n$-space.
Projective $n$-space makes sense over any field, and when $q$ is a
prime power, the Hecke algebra arises in the study of $n$-dimensional 
projective space over the field $\F_q$.  We shall explain this in
detail in the case of the projective plane.  But the $n$-simplex can be 
thought of as a `$q \to 1$ limit' of an $n$-dimensional projective
space over $\F_q$.  The idea is that the vertices, edges, triangles,
and so on of the $n$-simplex play the role of points, lines, planes,
and so on in a degenerate sort of projective space.  In this limiting case,
the Hecke algebra reduces to the group algebra of the symmetric group.  
As it turns out, this fact can be understood more clearly when we 
\textit{groupoidify} the Hecke algebra.  We shall sketch the idea in 
this paper, and give more details in the next paper of this series.  
In the meantime, see \cite{Hoffnung}.

Any Dynkin diagram also gives a geometry over the field $\mathbb{C}$,
and the symmetries of this geometry form a simple Lie group.  The
symmetry transformations close to the identity are described by the
Lie algebra $\g$ of this group---or equally well, by the universal
enveloping algebra $U\g$, which is a Hopf algebra.  This universal
enveloping algebra admits a $q$-deformation, a Hopf algebra $U_q \g$ 
known as a `quantum group'.  There has been a lot of work attempting 
to categorify quantum groups, from the early ideas of Crane and Frenkel 
\cite{CF}, to the work of Khovanov, Lauda and Rouqier \cite{Kh2,KL,Rou2}, 
and beyond.  

Here we sketch how to groupoidify, not the whole quantum group, but only 
its `positive part' $U_q^+ \g$.  When $q = 1$, this positive part is just
the universal enveloping algebra of a chosen maximal nilpotent
subalgebra of $\g$.  The advantage of restricting attention to the
positive part is that $U_q^+ \g$ has a basis in which the formula for
the product involves only \textit{nonnegative} real numbers---and any
such number is the cardinality of some groupoid.

The strategy for groupoidifying $U_q^+ \g$ is implicit in Ringel's
work on Hall algebras \cite{Ringel}.  Suppose we have a `simply-laced'
Dynkin diagram, meaning one where two generators of the Coxeter group
obey the Yang-Baxter equation whenever the corresponding dots are
connected by an edge.  If we pick a direction for each edge of this
Dynkin diagram, we obtain a directed graph.  This in turn freely
generates a category, say $Q$.  The objects in this category are the
dots of the Dynkin diagram, while the morphisms are paths built from
directed edges.

For any prime power $q$, there is a category $\Rep(Q)$ whose
objects are `representations' of $Q$: that is, functors
\[   R \maps Q \to \FinVect_q, \]
where $\FinVect_q$ is the category of finite-dimensional vector spaces
over $\F_q$.  The morphisms in $\Rep(Q)$ are natural transformations.
Thanks to the work of Ringel, one can see that the underlying
groupoid of $\Rep(Q)$---which has only natural \textsl{isomorphisms} 
as morphisms---groupoidifies the vector space $U_q^+ \g$.  Even better, 
we can groupoidify the product in $U_q^+ \g$.  The same sort of 
construction with the category of pointed finite sets replacing 
$\FinVect_q$ lets us handle the $q = 1$ case \cite{Szczesny}.  
So yet again, $q$-deformation is related to the passage from
pointed finite sets to finite-dimensional vector spaces over finite 
fields.

The plan of the paper is as follows.  In Section
\ref{degroupoidification}, we present some basic facts about
`degroupoidification'.  We describe a process that associates to any
groupoid $X$ the vector space $\R^{\u{X}}$ consisting of real-valued
functions on the set of isomorphism classes of objects of $X$, and
associates to any `tame' span of groupoids a linear operator.  In
Section \ref{homology}, we describe a slightly different process,
which associates to $X$ the vector space $\R[\u{X}]$ consisting of
formal linear combinations of isomorphism classes of objects of $X$.
Then we turn to some applications.  Section \ref{feynman} describes
how to groupoidify the theory of Feynman diagrams, Section \ref{hecke}
describes how to groupoidify the theory of Hecke algebras, and Section
\ref{hall} describes how to groupoidify Hall algebras.  In Section
\ref{processes} we prove that degroupoidifying a tame span gives a
well-defined linear operator.  We also give an explicit criterion for
when a span of groupoids is tame, and an explicit formula for the
operator coming from a tame span.  Section \ref{properties} proves
many other results stated earlier in the paper.  Appendix
\ref{appendix} provides some basic definitions and useful lemmas
regarding groupoids and spans of groupoids.  The goal is to make it
easy for readers to try their own hand at groupoidification.

\section{Degroupoidification}\label{degroupoidification}

In this section we describe a systematic process for turning groupoids
into vector spaces and tame spans into linear operators.  This
process, `degroupoidification', is in fact a kind of functor.
`Groupoidification' is the attempt to {\it undo} this functor.  To
`groupoidify' a piece of linear algebra means to take some structure
built from vector spaces and linear operators and try to find
interesting groupoids and spans that degroupoidify to give this
structure.  So, to understand groupoidification, we need to master
degroupoidification.

We begin by describing how to turn a groupoid into a vector space.  In
what follows, all our groupoids will be \textbf{essentially small}.  This
means that they have a {\it set} of isomorphism classes of objects,
not a proper class.  We also assume our groupoids are \textbf{locally
finite}: given any pair of objects, the set of morphisms from one object 
to the other is finite.

\begin{definition}
Given a groupoid $X$, let $\u{X}$ be the set of isomorphism
classes of objects of $X$.
\end{definition}
\begin{definition}
\label{vectorspace}
Given a groupoid $X$, let the {\bf degroupoidification} of $X$ be
the vector space
\[ \R^{\u{X}} = \lbrace \Psi \maps \u{X} \to \R \rbrace. \]
\end{definition}

A nice example is the groupoid of finite sets and bijections:
\begin{example}
\label{power_series}
\textup{
Let $E$ be the groupoid of finite sets and bijections. Then
$\u{E}\iso \N$, so
\[ \R^{\u{E}} \iso \lbrace \psi \maps \N \to \R \rbrace 
\cong \R[[z]] , \]
where the formal power series associated to a function $\psi
\maps \N \to \R$ is given by:
\[  \sum_{n\in\N}\psi(n)z^n. \]
}
\end{example}

A sufficiently nice groupoid over a groupoid $X$ will give a vector
in $\R^{\u{X}}$.  To construct this, we use the concept of
groupoid cardinality:

\begin{definition}
The {\bf cardinality} of a groupoid $X$ is
\[ |X| = \sum_{[x]\in \u{X}} \frac{1}{|\Aut(x)|} \]
where $|\Aut(x)|$ is the cardinality of the automorphism group of an object
$x$ in $X$.  If this sum diverges, we say $|X| = \infty$.
\end{definition}

The cardinality of a groupoid $X$ is a well-defined nonnegative rational
number whenever $\u{X}$ and all the automorphism groups of
objects in $X$ are finite.  More generally, we say:

\begin{definition}
A groupoid $X$ is {\bf tame} if it is essentially small,
locally finite, and $|X| < \infty$.
\end{definition}
\noindent
We show in Lemma \ref{EQUIVGRPD} that given equivalent groupoids
$X$ and $Y$, $|X| = |Y|$.  We give a useful alternative formula
for groupoid cardinality in Lemma \ref{ALTCARD}.

The reason we use $\R$ rather than $\Q$ as our ground field is that
there are interesting groupoids whose cardinalities are irrational numbers.
The following example is fundamental:

\begin{example}
\textup{
The groupoid of finite sets $E$ has cardinality
\[ |E| ~=~ \sum_{n \in \N} \frac{1}{|S_n|} ~=~ 
\sum_{n \in \N} \frac{1}{n!} ~=~ e. \]
}
\end{example}

With the concept of groupoid cardinality in hand, we now describe
how to obtain a vector in $\R^{\u{X}}$ from a sufficiently nice 
groupoid over $X$.  

\begin{definition}
Given a groupoid $X$, a {\bf groupoid over $X$} is a groupoid $\Psi$ 
equipped with a functor $v \maps \Psi \to X$.
\end{definition}
\begin{definition}
Given a groupoid over $X$, say $v \maps \Psi \to X$, and an object $x \in X$,
we define the {\bf full inverse image} of $x$,
denoted $v^{-1}(x)$, to be the groupoid where:
\begin{itemize}
\item
an object is an object $a \in \Psi$ such that $v(a) \cong x$;
\item
a morphism $f \maps a \to a'$ is any morphism in $\Psi$ from $a$ to
$a'$.
\end{itemize}
\end{definition}
\begin{definition}
A groupoid over $X$, say $v \maps \Psi \to X$, is {\bf tame} if the 
groupoid $v^{-1}(x)$ is tame for all $x \in X$. 
\end{definition}

\noindent 
We sometimes loosely say that $\Psi$ is a tame groupoid over $X$.
When we do this, we are referring to a functor $v \maps \Psi \to X$
that is tame in the above sense.  We do not mean that $\Psi$ is tame
as a groupoid.

\begin{definition}
\label{degroupoidification_of_vectors}
Given a tame groupoid over $X$, say $v \maps \Psi\to X$, 
there is a vector $\utilde{\Psi} \in \R^{\u{X}}$ 
defined by:
\[ \utilde{\Psi}([x]) = |v^{-1}(x)|. \]
\end{definition}

\noindent
As discussed in Section \ref{intro}, the theory of generating functions
gives many examples of this construction.  Here is one:

\begin{example}
\label{generating_function}
\textup{
Let $\Psi$ be the groupoid of 2-colored finite sets.  An object of
$\Psi$ is a `2-colored finite set': that is a finite set $S$ equipped
with a function $c: S \to 2$, where $2 = \{0,1\}$.  A morphism of
$\Psi$ is a function between 2-colored finite sets preserving the
2-coloring: that is, a commutative diagram of this sort:
\[
\xymatrix{
& S \ar[dr]_{c} \ar[rr]^{f} && S' \ar[dl]^{c'}  \\
 & & \{0,1\} & & 
}
\]       
There is a forgetful functor $v \maps \Psi \to E$ sending any
2-colored finite set $c \maps S \to 2$ to its underlying set $S$.  It is a
fun exercise to check that for any $n$-element set, say $n$ for short,
the groupoid $v^{-1}(n)$ is equivalent to the weak quotient
$2^n/\!/S_n$, where $2^n$ is the set of functions $c: n \to 2$ and the
permutation group $S_n$ acts on this set in the obvious way.  It follows
that
\[  \utilde{\Psi}(n) = |v^{-1}(n)| = |2^n /\!/ S_n | = 2^n/n! \]
so the corresponding power series is
\[ \utilde{\Psi} = \sum_{n \in \N} \frac{2^n}{n!}z^n = 
e^{2z} \in \R[[z]]. \]
We call this the {\bf generating function} of $v \maps \Psi \to E$,
and indeed it is the usual generating function for 2-colored sets.
Note that the $n!$ in the denominator, often regarded as a convention,
arises naturally from the use of groupoid cardinality.  }
\end{example}

Both addition and scalar multiplication of vectors have groupoidified
analogues.  We can add two groupoids $\Phi$, $\Psi$ over $X$ by taking
their coproduct, i.e., the disjoint union of $\Phi$ and $\Psi$ with
the obvious map to $X$:
\[
\xymatrix{
\Phi + \Psi \ar[d] \\
X
}
\]
We then have:
\begin{proposition*}
Given tame groupoids $\Phi$ and $\Psi$ over $X$,
\[\utilde{\Phi + \Psi} = \utilde{\Phi} + \utilde{\Psi}.\]
\end{proposition*}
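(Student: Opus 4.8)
The plan is to prove this by unwinding the relevant definitions and reducing the claim to a statement about groupoid cardinalities. The vector $\utilde{\Phi+\Psi}$ is determined by its values at each isomorphism class $[x] \in \u{X}$, namely $\utilde{\Phi+\Psi}([x]) = |(v_{\Phi+\Psi})^{-1}(x)|$, where $v_{\Phi+\Psi} \maps \Phi+\Psi \to X$ is the functor induced on the coproduct by $v_\Phi$ and $v_\Psi$. Since two vectors in $\R^{\u X}$ are equal iff they agree on every $[x]$, it suffices to show that for each object $x \in X$,
\[ |(v_{\Phi+\Psi})^{-1}(x)| = |v_\Phi^{-1}(x)| + |v_\Psi^{-1}(x)|. \]

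First I would identify the full inverse image $(v_{\Phi+\Psi})^{-1}(x)$ explicitly. Because $\Phi+\Psi$ is the disjoint union of $\Phi$ and $\Psi$, every object lies in exactly one of the two summands, and there are no morphisms between objects of different summands; the map to $X$ restricts to $v_\Phi$ on the first summand and $v_\Psi$ on the second. Hence an object $a$ of $\Phi+\Psi$ with $v_{\Phi+\Psi}(a) \cong x$ is either an object of $\Phi$ with $v_\Phi(a) \cong x$ or an object of $\Psi$ with $v_\Psi(a) \cong x$, and morphisms stay within a summand. This gives a natural isomorphism (indeed an equality) of groupoids
\[ (v_{\Phi+\Psi})^{-1}(x) \;\iso\; v_\Phi^{-1}(x) + v_\Psi^{-1}(x), \]
the coproduct of the two full inverse images.

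The remaining step is to verify that groupoid cardinality is additive under coproducts: $|A + B| = |A| + |B|$ for tame groupoids $A$ and $B$. This follows directly from the definition of cardinality in Equation \ref{eq:groupoid_cardinality}, since $\u{A+B} \iso \u{A} \sqcup \u{B}$ as sets, and the automorphism group of an object in $A+B$ is computed entirely within its own summand, so the defining sum splits as $\sum_{[a]\in\u A} 1/|\Aut(a)| + \sum_{[b]\in\u B} 1/|\Aut(b)| = |A| + |B|$. Combining this with the identification above yields the claimed equation at each $[x]$, and hence the equality of vectors. I would also note that tameness of $\Phi$ and $\Psi$ over $X$ guarantees each $v_\Phi^{-1}(x)$ and $v_\Psi^{-1}(x)$ has finite cardinality, so the sums are genuine real numbers and $\Phi+\Psi$ is again tame over $X$.

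I do not expect a serious obstacle here: the argument is entirely a matter of tracking definitions, and the only mild care required is the bookkeeping that the coproduct of groupoids introduces no new morphisms across summands, so that both the full inverse image and the automorphism groups decompose cleanly. The additivity of cardinality over coproducts is the conceptual crux, but it is immediate from the defining formula.
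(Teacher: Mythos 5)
Your proposal is correct and matches the paper's own argument (Lemma \ref{addvectors}): the paper likewise observes that the full inverse image of any $x \in X$ in the coproduct is the coproduct of the full inverse images in the summands, and that groupoid cardinality is additive under coproduct. You have merely spelled out in more detail the two facts the paper states in one line, so there is nothing to correct.
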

\begin{proof}
This will appear later as part of Lemma \ref{addvectors},
which also considers infinite sums.
\end{proof}

We can also multiply a groupoid over $X$ by a `scalar'---that is, a
fixed groupoid.  Given a groupoid over $X$, say $v \maps \Phi\to X$,
and a groupoid $\Lambda$, the cartesian product $\Lambda\times \Psi$
becomes a groupoid over $X$ as follows:
\[\xymatrix{
\Lambda\times \Psi \ar[d]^{v\pi_2}\\ X\\ }\] 
where $\pi_2 \maps \Lambda\times \Psi\to \Psi$ is projection onto the
second factor.  We then have:

\begin{proposition*}
Given a groupoid $\Lambda$ and a groupoid $\Psi$ over $X$, the groupoid
$\Lambda \times \Psi$ over $X$ satisfies
\[\utilde{\Lambda \times \Psi} = |\Lambda|\utilde{\Psi}.\]
\end{proposition*}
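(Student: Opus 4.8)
The plan is to prove the identity of vectors by checking it pointwise on $\u{X}$, which reduces the claim to two ingredients: an explicit description of the full inverse image of the composite functor $v\pi_2$, and the multiplicativity of groupoid cardinality under cartesian product. First I would fix an object $x \in X$ and unwind Definition \ref{degroupoidification_of_vectors}, which gives $\utilde{\Lambda \times \Psi}([x]) = |(v\pi_2)^{-1}(x)|$ while $\utilde{\Psi}([x]) = |v^{-1}(x)|$. Thus it suffices to show $|(v\pi_2)^{-1}(x)| = |\Lambda|\,|v^{-1}(x)|$ for every $x$, and then the two functions on $\u{X}$ agree.

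Next I would identify the groupoid $(v\pi_2)^{-1}(x)$ on the nose. An object is a pair $(l,a) \in \Lambda \times \Psi$ with $v\pi_2(l,a) = v(a) \iso x$, that is, $l$ an arbitrary object of $\Lambda$ together with $a$ an object of $v^{-1}(x)$; a morphism $(l,a) \to (l',a')$ is an arbitrary pair $(g,f)$ with $g \maps l \to l'$ in $\Lambda$ and $f \maps a \to a'$ in $\Psi$. Comparing this with the definition of the full inverse image, these are precisely the objects and morphisms of the product groupoid $\Lambda \times v^{-1}(x)$, so in fact $(v\pi_2)^{-1}(x) = \Lambda \times v^{-1}(x)$. (Only an equality is needed here, but Lemma \ref{EQUIVGRPD} would cover the identification even if one had only an equivalence.)

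It then remains to show that cardinality is multiplicative, $|\Lambda \times Y| = |\Lambda|\,|Y|$, and to apply this with $Y = v^{-1}(x)$, which is tame because $\Psi$ is a tame groupoid over $X$. For this I would use that isomorphism classes of objects in a product groupoid are exactly pairs of isomorphism classes, and that $\Aut(l,a) = \Aut(l) \times \Aut(a)$, so that $|\Aut(l,a)| = |\Aut(l)|\,|\Aut(a)|$; substituting into the defining sum for cardinality factors the double sum as the product of the two single sums. The main obstacle is exactly this last factorization: rearranging a possibly infinite double sum of nonnegative terms into a product of sums. This is harmless once every sum converges, which holds provided $\Lambda$ is tame (so $|\Lambda| < \infty$); in that case $\Lambda \times v^{-1}(x)$ is tame and $\Lambda \times \Psi$ is a tame groupoid over $X$, so that $\utilde{\Lambda \times \Psi}$ is defined in the first place. (If $|\Lambda| = \infty$ one reads the identity with the convention $\infty \cdot 0 = 0$, matching the case $\utilde{\Psi}([x]) = 0$.) Verifying these finiteness conditions, rather than the algebra of the sums, is the only genuine work.
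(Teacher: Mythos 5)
Your proposal is correct and follows essentially the same route as the paper: the paper's Lemma \ref{essinverseproduct} likewise identifies the full inverse image with the product groupoid $\Lambda \times v^{-1}(x)$ and then factors the groupoid cardinality using $\Aut(\lambda,a) \cong \Aut(\lambda)\times\Aut(a)$, exactly as you do (just phrased via the functor $c\times v$ into $1\times X$ rather than $v\pi_2$ into $X$). Your extra attention to the tameness and convergence caveats is a minor refinement the paper leaves implicit, not a different argument.
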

\begin{proof}
This is proved as Proposition \ref{scalarmult1}.
\end{proof}

We have seen how degroupoidification turns a groupoid $X$ into a vector space
$\R^{\u{X}}$.  Degroupoidification also turns any sufficiently 
nice span of groupoids into a linear operator.

\begin{definition}
Given groupoids $X$ and $Y$, a {\bf span} from $X$ to $Y$ is a diagram
\[\xymatrix{
 & S\ar[dl]_q\ar[dr]^p & \\
 Y & & X \\
}\]
where $S$ is groupoid and $p\maps S\to X$ and $q \maps S\to Y$ are functors.
\end{definition}

To turn a span of groupoids into a linear operator, we need a
construction called the `weak pullback'.  This construction will let
us apply a span from $X$ to $Y$ to a groupoid over $X$ to obtain a
groupoid over $Y$.  Then, since a tame groupoid over $X$ gives a
vector in $\R^{\u{X}}$, while a tame groupoid over $Y$ gives a
vector in $\R^{\u{Y}}$, a sufficiently nice span from $X$ to
$Y$ will give a map from $\R^{\u{X}}$ to $\R^{\u{Y}}$.
Moreover, this map will be linear.

As a warmup for understanding weak pullbacks for groupoids, we recall
ordinary pullbacks for sets, also called `fibered products'.
The data for constructing such a pullback is a pair of sets equipped
with functions to the same set:
\[
\xymatrix{
& T \ar[dr]_{q} & & S \ar[dl]^{p} & \\
 & & X & & 
}
\]  
The pullback is the set
\[ P = \lbrace (s,t) \in S \times T \, | \; p(s) = q(t) \rbrace  \]
together with the obvious projections $\pi_S \maps P \to S$ and 
$\pi_T \maps P \to T$.  The pullback makes this diamond commute:
\[
\xymatrix{
& & P \ar[dl]_{\pi_T} \ar[dr]^{\pi_S} & &\\
& T \ar[dr]_{q} & & S \ar[dl]^{p} & \\
 & & X & & 
}
\]  
and indeed it is the `universal solution' to the problem of finding
such a commutative diamond \cite{Mac Lane}.

To generalize the pullback to groupoids, we need to weaken one
condition.  The data for constructing a weak pullback is a pair of
groupoids equipped with functors to the same groupoid:
\[
\xymatrix{
& T \ar[dr]_{q} & & S \ar[dl]^{p} & \\
 & & X & & 
}
\]  
But now we replace the {\it equation} in the definition of pullback
by a {\it specified isomorphism}.  So, we define the weak pullback 
$P$ to be the groupoid where an object is a triple $(s,t,\alpha)$ 
consisting of an object $s \in S$, an object $t \in T$, and an 
isomorphism $\alpha \maps p(s) \to q(t)$ in $X$.  A morphism
in $P$ from $(s,t,\alpha)$ to $(s',t',\alpha')$ consists of a morphism
$f \maps s \to s'$ in $S$ and a morphism $g \maps t \to t'$ in $T$
such that the following square commutes:
\[
\xymatrix{
p(s) \ar[d]_{p(f)} \ar[r]^{\alpha} & q(t) \ar[d]^{q(g)} \\
p(s') \ar[r]_{\alpha'} & q(t')
}
\]
Note that any set can be regarded as a {\bf discrete} groupoid:
one with only identity morphisms.  For discrete groupoids, the weak
pullback reduces to the ordinary pullback for sets.
Using the weak pullback, we can apply a span from $X$ to $Y$ to a groupoid
over $X$ and get a groupoid over $Y$.  Given a span of groupoids:
\[
\xymatrix{
& S\ar[dl]_{q} \ar[dr]^{p} & \\
Y & & X
}
\]
and a groupoid over $X$:
\[
\xymatrix{
   & \Psi \ar[dl]_{v} \\
   X &  
}
\]
we can take the weak pullback, which we call $S\Psi$:
\[
\xymatrix{
& & S\Psi \ar[dl]_{\pi_S}\ar[dr]^{\pi_{\Psi}} & \\
& S \ar[dl]_{q} \ar[dr]^{p} & & \Psi \ar[dl]_{v} \\
Y & & X &  
}
\]
and think of $S\Psi$ as a groupoid over $Y$:
\[
\xymatrix{
& S\Psi \ar[dl]_{q \pi_S}  \\
Y  
}
\]
This process will determine a linear operator from $\R^{\u X}$ 
to $\R^{\u Y}$ if the span $S$ is sufficiently nice:
\begin{definition}
A span
\[
\xymatrix{
& S\ar[dl]_{q} \ar[dr]^{p} & \\
Y & & X
}
\]
is {\bf tame} if $v \maps \Psi \to X$ being tame implies that 
$q\pi_{S} \maps S\Psi \to Y$ is tame.
\end{definition}

\begin{theorem*}
Given a tame span:
\[
\xymatrix{
& S\ar[dl]_{q} \ar[dr]^{p} & \\
Y & & X
}
\]
there exists a unique linear operator
\[ \utilde{S} \maps \R^{\u{X}} \to \R^{\u{Y}} \]
such that
\[ \utilde{S}\utilde{\Psi} = \utilde{S\Psi} \]
whenever $\Psi$ is a tame groupoid over $X$.
\end{theorem*}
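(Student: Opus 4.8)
The plan is to build $\utilde{S}$ from an explicit matrix and to extract uniqueness from the observation that tame groupoids over $X$ already produce a spanning set of $\R^{\u{X}}$. I would first show that \emph{every} vector of $\R^{\u{X}}$ is a difference of two vectors of the form $\utilde{\Psi}$ with $\Psi$ tame over $X$. Given $\phi \maps \u{X} \to \R$, write $\phi = \phi_+ - \phi_-$ with $\phi_+, \phi_- \geq 0$. For each class $[x]$ choose a groupoid $G_x$ whose cardinality equals $\phi_+([x])$---as noted in the introduction, any nonnegative real is a groupoid cardinality---map its unique component to $x$, and let $\Psi_+$ be the disjoint union of these over $\u{X}$. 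Each full inverse image $v^{-1}(x)$ is equivalent to $G_x$, hence tame, so $\Psi_+$ is tame over $X$ with $\utilde{\Psi_+} = \phi_+$, and similarly for $\phi_-$. Consequently the vectors $\utilde{\Psi}$ span $\R^{\u{X}}$, so any linear operator obeying $\utilde{S}\utilde{\Psi} = \utilde{S\Psi}$ is pinned down on a spanning set and is therefore unique. It remains to produce one such operator.

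For existence I would first record the effect of $S$ on the standard basis. Writing $e_{[x]} \in \R^{\u{X}}$ for the indicator of $[x]$ and $\langle x\rangle$ for the connected component of $X$ on the objects isomorphic to $x$ (a tame groupoid over $X$ with $\utilde{\langle x\rangle} = e_{[x]}/|\Aut(x)|$), the weak pullback $S\langle x\rangle$ is equivalent to the full inverse image of $p$ at $x$, and its vector as a groupoid over $Y$ defines a matrix entry
\[ M_{[y],[x]} \;=\; |\Aut(x)|\,\utilde{S\langle x\rangle}([y]), \]
an explicit number assembled from $|\Aut(x)|$, $|\Aut(y)|$ and the automorphism groups of those $s \in S$ with $p(s) \iso x$ and $q(s) \iso y$. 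The technical heart is then a Fubini identity for the weak pullback: for \emph{any} tame $\Psi$ over $X$,
\[ \utilde{S\Psi}([y]) \;=\; \sum_{[x] \in \u{X}} M_{[y],[x]}\,\utilde{\Psi}([x]) \]
as an equation of nonnegative extended reals. I would prove this by computing $|(q\pi_S)^{-1}(y)|$ directly: an object of this fiber is a triple whose $S$-component $s$ satisfies $p(s)\iso x$ for some class $[x]$, so I sort objects by $[x]$, account for the automorphisms introduced by the specified isomorphism in the weak pullback, and sum over $\u{X}$; the interchange of summations is legitimate because all terms are nonnegative. The additivity $\utilde{S(\Phi + \Psi)} = \utilde{S\Phi} + \utilde{S\Psi}$ (weak pullback commutes with coproducts, together with the earlier addition proposition) lets me reduce the bookkeeping to the connected pieces $\langle x\rangle$.

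Two consequences finish the argument. First, tameness of the span forces every row of $M$ to be finitely supported: if $M_{[y],[x]} > 0$ for infinitely many $[x]$, then applying the first paragraph's construction to the function equal to $1/M_{[y],[x]}$ on that support yields a tame $\Psi$ for which the Fubini sum, hence $\utilde{S\Psi}([y])$, diverges, contradicting tameness of $S\Psi$ over $Y$; this is the promised explicit tameness criterion. With finite rows, $(\utilde{S}\phi)([y]) = \sum_{[x]} M_{[y],[x]}\phi([x])$ is a finite sum for each $[y]$, so it defines a genuine linear operator $\utilde{S}\maps \R^{\u{X}} \to \R^{\u{Y}}$, and the Fubini identity says exactly that $\utilde{S}\utilde{\Psi} = \utilde{S\Psi}$ for all tame $\Psi$. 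Combined with the uniqueness already established, this proves the theorem.

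I expect the Fubini identity to be the only real obstacle: the danger is mismatched factors of $|\Aut|$ coming from the \emph{specified isomorphism} in the weak pullback, and getting these exactly right---presumably via the appendix lemmas on cardinalities of weak pullbacks and of full inverse images of composites---is what makes the fiber cardinality of $S\Psi$ factor as the stated matrix--vector product. Everything else, namely uniqueness, linearity, and the finite-row criterion, follows formally once that identity is in hand.
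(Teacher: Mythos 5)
Your proposal is correct, and it organizes the argument in a genuinely different way from the paper: in effect you establish the explicit formula and the necessity half of the tameness criterion of Theorem \ref{matrix1} first, and then derive the present theorem from them, whereas the paper proves the present theorem first and extracts Theorem \ref{matrix1} from the calculations afterwards. Concretely, the paper's proof of Theorem \ref{PROCESS1} defines $\utilde{S}$ implicitly on nonnegative vectors by $\utilde{S}\utilde{\Psi} := \utilde{S\Psi}$ and devotes all its work to showing that linear relations $\sum_i \alpha_i \utilde{\Psi_i} = 0$ are carried to $\sum_i \alpha_i \utilde{S\Psi_i} = 0$; you instead build the matrix $M_{[y][x]} = |\Aut(x)|\,\utilde{S\langle x\rangle}([y])$ up front (note each entry is automatically finite, since $\langle x\rangle$ is tame over $X$ and $S$ is a tame span), prove the Fubini identity in the extended reals, and deduce row-finiteness from tameness via your $\psi = 1/M$ counterexample, after which existence, linearity, and the compatibility $\utilde{S}\utilde{\Psi} = \utilde{S\Psi}$ are formal. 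The computational core is necessarily shared: your Fubini identity is exactly the paper's Equations (\ref{formula1})--(\ref{formula2}), proved the same way (split the fiber $(q\pi_S)^{-1}(y)$ by the class $[x]$; after passing to skeletal data an object carries an $\alpha \in \Aut(x)$, the morphisms out of it form $\Aut(s) \times \Aut(a)$, and Lemma \ref{ALTCARD} yields the weights $|\Aut(x)|/(|\Aut(s)||\Aut(a)|)$), using the same reductions via Lemmas \ref{addvectors} and \ref{linearity} and equivalence-invariance of cardinality; so you have correctly located the crux and the danger spot. The trade-off: your route delivers the explicit operator formula and the tameness criterion as byproducts of existence and needs no separate well-definedness argument, while the paper's route never needs row-finiteness in order to define the operator (tameness of $S\Psi$ already makes each output a finite vector) but must then verify preservation of relations. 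Two harmless slips, neither affecting correctness: a groupoid $G_x$ realizing an arbitrary nonnegative real cardinality generally has infinitely many components, so the structure functor to $X$ should simply be the constant functor at $x$ rather than a map of ``its unique component''; and the matrix entries are assembled from $|\Aut(x)|$ and the $|\Aut(s)|$ only---$|\Aut(y)|$ would enter only under the $\alpha \neq 0$ conventions of Section \ref{homology}.
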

\begin{proof} This is Theorem \ref{PROCESS1}. \end{proof}

Theorem \ref{matrix1} provides an explicit criterion for when a span
is tame.  This theorem also gives an explicit formula for the the
operator corresponding to a tame span $S$ from $X$ to $Y$.  If
$\u{X}$ and $\u{Y}$ are finite, then
$\R^{\u{X}}$ has a basis given by the isomorphism classes
$[x]$ in $X$, and similarly for $\R^{\u{Y}}$.  With respect to
these bases, the matrix entries of $\utilde{S}$ are given as follows:
\begin{equation}
\label{matrix_entry}
\utilde{S}_{[y][x]} = 
\sum_{[s]\in\u{p^{-1}(x)}\bigcap \u{q^{-1}(y)} }\frac{|\Aut(x)|}{|\Aut(s)|}
\end{equation}
where $|\Aut(x)|$ is the set cardinality of the automorphism group of
$x \in X$, and similarly for $|\Aut(s)|$.  Even when $\u{X}$
and $\u{Y}$ are not finite, we have the following formula for
$\utilde S$ applied to $\psi \in \R^{\u X}$:
\begin{equation}
\label{operator_formula}
(\utilde{S} \psi)([y]) = 
\sum_{[x] \in \u{X}} \;\,
\sum_{[s]\in\u{p^{-1}(x)}\bigcap \u{q^{-1}(y)}}
\frac{|\Aut(x)|}{|\Aut(s)|} \,\, \psi([x]) \, .
\end{equation}

As with vectors, there are groupoidified analogues of addition and scalar
multiplication for operators.  Given two spans from $X$ to $Y$:
\[
\xymatrix{
& S\ar[dl]_{q_S} \ar[dr]^{p_S} & & & T\ar[dl]_{q_T} \ar[dr]^{p_T} & \\
Y & & X & Y & & X
}
\]
we can add them as follows.  By the universal property of the
coproduct we obtain from the right legs of the above spans a functor
from the disjoint union $S + T$ to $X$.  Similarly, from the left legs
of the above spans, we obtain a functor from $S + T$ to $Y$.  Thus, we
obtain a span
\[
\xymatrix{
& S + T\ar[dl] \ar[dr] & \\
Y & & X
}
\]
This addition of spans is compatible with degroupoidification:
\begin{proposition*}
If $S$ and $T$ are tame spans from $X$ to $Y$, then so is $S + T$, and
\[    \utilde{S+T} = \utilde{S} + \utilde{T} .\]
\end{proposition*}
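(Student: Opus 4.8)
The plan is to reduce the whole statement to a single structural fact about weak pullbacks---that they distribute over disjoint unions of spans---and then to finish by invoking the uniqueness clause of the degroupoidification theorem (Theorem \ref{PROCESS1}).

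The first and central step is to establish, for every groupoid $v \maps \Psi \to X$, a canonical isomorphism of groupoids over $Y$,
\[ (S+T)\Psi \iso S\Psi + T\Psi . \]
I would prove this by unwinding the definition of the weak pullback. An object of $(S+T)\Psi$ is a triple $(u,a,\alpha)$ with $u$ an object of $S+T$, $a$ an object of $\Psi$, and $\alpha$ an isomorphism in $X$ from the image of $u$ to $v(a)$. Because $S+T$ is the coproduct, each $u$ lies in exactly one of $S$ or $T$, so each triple belongs to exactly one of $S\Psi$, $T\Psi$; and because $S+T$ has no morphisms running between its $S$-part and its $T$-part, neither does $(S+T)\Psi$. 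Hence the functor that sorts triples according to where $u$ lives is an isomorphism of groupoids, and it visibly commutes with the projections to $Y$.

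Granting this, tameness of the span $S+T$ will be immediate. Let $\Psi$ be any tame groupoid over $X$. Since $S$ and $T$ are tame spans, $S\Psi$ and $T\Psi$ are tame groupoids over $Y$. Over each object $y \in Y$, the full inverse image in $(S+T)\Psi$ is the disjoint union of the corresponding full inverse images in $S\Psi$ and $T\Psi$, and the cardinality of a disjoint union of groupoids is the sum of their cardinalities; so each full inverse image has finite cardinality and is tame. Thus $(S+T)\Psi$ is tame over $Y$, which is exactly the condition that $S+T$ be a tame span, and Theorem \ref{PROCESS1} then produces the operator $\utilde{S+T}$.

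It then remains to identify the operator. For any tame $\Psi$ over $X$, I would chain the isomorphism above (together with invariance of $\utilde{(-)}$ under equivalence, Lemma \ref{EQUIVGRPD}), the addition-of-vectors result (Lemma \ref{addvectors}), and the defining property of $\utilde{S}$ and $\utilde{T}$ to compute
\[ \utilde{(S+T)\Psi} = \utilde{S\Psi + T\Psi} = \utilde{S\Psi} + \utilde{T\Psi} = \utilde{S}\,\utilde{\Psi} + \utilde{T}\,\utilde{\Psi} = (\utilde{S}+\utilde{T})\,\utilde{\Psi} . \]
This exhibits $\utilde{S}+\utilde{T}$ as a linear operator satisfying the characterizing equation $(\utilde{S}+\utilde{T})\utilde{\Psi} = \utilde{(S+T)\Psi}$ for every tame $\Psi$ over $X$. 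Since Theorem \ref{PROCESS1} asserts that $\utilde{S+T}$ is the \emph{unique} linear operator with this property, I conclude $\utilde{S+T} = \utilde{S}+\utilde{T}$. The only genuine work lies in the first step; the point to watch is that the weak pullback must split as a disjoint union at the level of morphisms as well as objects, which is precisely where the coproduct structure of $S+T$ is used.
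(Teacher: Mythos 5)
Your proof is correct, but it takes a genuinely different route from the paper's. The paper disposes of this statement (Proposition \ref{add_spans}) in a single line: it invokes the explicit formula of Theorem \ref{matrix1}, since both the tameness criterion there and the matrix entries $\sum_{[s]} |\Aut(x)|/|\Aut(s)|$ are manifestly additive when the apex of the span is a disjoint union $S+T$. You instead argue structurally: weak pullback distributes over coproducts in the span slot, $(S+T)\Psi \iso S\Psi + T\Psi$ as groupoids over $Y$ (your observation that the splitting must hold at the level of morphisms, using that $S+T$ has no morphisms between its two parts, is exactly the right point to check), which gives tameness of $S+T$ directly from the definition of a tame span; then Lemma \ref{addvectors} and the uniqueness clause of Theorem \ref{PROCESS1} identify the operator. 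Your route never touches the explicit formula, and it is the mirror image of the paper's own Lemma \ref{linearity}, which proves distributivity in the other slot, $S(\sum_i \Psi_i) \simeq \sum_i S\Psi_i$; it would also extend to infinite coproducts of spans, with the same pointwise-convergence caveat as in Lemma \ref{addvectors}. What the paper's route buys is brevity once Theorem \ref{matrix1} is in hand, plus the fact that tameness of $S+T$ is verified against the intrinsic criterion rather than quantifying over all tame $\Psi$. One small citation quibble: for the step $\utilde{(S+T)\Psi} = \utilde{S\Psi + T\Psi}$ the precise reference is Lemma \ref{vectorswelldefined} (equivalent groupoids over a fixed base give equal vectors); Lemma \ref{EQUIVGRPD}, which you cite, is only the underlying cardinality fact---though since your comparison functor is an isomorphism commuting strictly with the projections to $Y$, either suffices.
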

\begin{proof} This is proved as Proposition \ref{add_spans}.
\end{proof}

We can also multiply a span by a `scalar': that is, a fixed groupoid.
Given a groupoid $\Lambda$ and a span
\[
\xymatrix{
& S\ar[dl]_q \ar[dr]^p & \\
Y & & X
}
\]
we can multiply them to obtain a span
\[
\xymatrix{
& \Lambda \times S \ar[dl]_{q\pi_2} \ar[dr]^{p\pi_2} & \\
Y & & X
}
\]
Again, we have compatibility with degroupoidification:

\begin{proposition*}
Given a tame groupoid $\Lambda$ and a tame span
\[
\xymatrix{
& S\ar[dl] \ar[dr] & \\
Y & & X
}
\]
then $\Lambda \times S$ is tame and
\[\utilde{\Lambda \times S} = |\Lambda| \, \utilde{S}.\]
\end{proposition*}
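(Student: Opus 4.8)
The plan is to reduce everything to the scalar-multiplication rule for vectors (Proposition \ref{scalarmult1}) together with the universal property of Theorem \ref{PROCESS1}, by showing that forming the weak pullback commutes with multiplying by the fixed groupoid $\Lambda$. Concretely, for any groupoid $v \maps \Psi \to X$ I would first exhibit a canonical equivalence of groupoids over $Y$ (in fact an isomorphism on the nose)
\[ (\Lambda \times S)\Psi \;\simeq\; \Lambda \times (S\Psi). \]
On objects this sends a triple $((\lambda,s),a,\alpha)$ of the weak pullback on the left---where $\alpha \maps p(s) \to v(a)$, using that the right leg of $\Lambda \times S$ is $p\pi_2$---to the object $(\lambda,(s,a,\alpha))$ on the right, and on morphisms it matches the commuting-square data in the obvious way. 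The only point needing care is compatibility with the two structure maps to $Y$: the left-hand groupoid sends $((\lambda,s),a,\alpha)$ to $q(s)$ (apply the left leg $q\pi_2$ of $\Lambda \times S$ after the weak-pullback projection), while the right-hand groupoid sends $(\lambda,(s,a,\alpha))$ to $q(s)$ (apply $q\pi_S$ after projecting away $\Lambda$); these agree, so the equivalence lives over $Y$. I expect this bookkeeping to be the main obstacle, albeit a mild one, since it is the only place where the geometry of the weak pullback genuinely enters.

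Granting the equivalence, tameness of the span $\Lambda \times S$ follows quickly. Assume $\Psi$ is a tame groupoid over $X$. Since $S$ is a tame span, $S\Psi$ is tame over $Y$, so each full inverse image $(q\pi_S)^{-1}(y)$ is a tame groupoid. The full inverse image of $y$ under $\Lambda \times (S\Psi) \to Y$ is $\Lambda \times (q\pi_S)^{-1}(y)$, which is essentially small and locally finite (products preserve both), and which has cardinality $|\Lambda|\,|(q\pi_S)^{-1}(y)| < \infty$ by multiplicativity of groupoid cardinality under products (a basic property of $|\,\cdot\,|$). Hence it is tame. By the equivalence above together with the invariance of tameness and of cardinality under equivalence (Lemma \ref{EQUIVGRPD}), the corresponding full inverse images of $(\Lambda \times S)\Psi \to Y$ are tame as well. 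Thus $q\pi_{\Lambda \times S} \maps (\Lambda \times S)\Psi \to Y$ is tame for every tame $\Psi$ over $X$, which is exactly the assertion that the span $\Lambda \times S$ is tame.

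Finally I would establish the operator identity. For any tame $\Psi$ over $X$, the defining property of $\utilde{\Lambda \times S}$ (Theorem \ref{PROCESS1}), the equivalence of Step 1 with invariance of $\utilde{\,\cdot\,}$ under equivalence (Lemma \ref{EQUIVGRPD}), the scalar rule for vectors (Proposition \ref{scalarmult1}), and the defining property of $\utilde{S}$ give, in turn,
\[ \utilde{\Lambda \times S}\,\utilde{\Psi} = \utilde{(\Lambda \times S)\Psi} = \utilde{\Lambda \times (S\Psi)} = |\Lambda|\,\utilde{S\Psi} = |\Lambda|\,\utilde{S}\,\utilde{\Psi}. \]
Hence the linear operator $|\Lambda|\,\utilde{S}$ satisfies the characterizing equation $T\utilde{\Psi} = \utilde{(\Lambda \times S)\Psi}$ for all tame $\Psi$ over $X$. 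Since Theorem \ref{PROCESS1} asserts that such an operator is unique, it must coincide with $\utilde{\Lambda \times S}$, yielding $\utilde{\Lambda \times S} = |\Lambda|\,\utilde{S}$ as claimed.
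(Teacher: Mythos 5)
Your proof is correct and is essentially the paper's own argument: the paper's one-line proof (``This follows from Lemma \ref{essinverseproduct}'') compresses exactly the steps you spell out, namely identifying the weak pullback $(\Lambda\times S)\Psi$ with $\Lambda\times(S\Psi)$ over $Y$ and then invoking multiplicativity of groupoid cardinality on full inverse images (which is the content of Lemma \ref{essinverseproduct}, and of Proposition \ref{scalarmult1} that you cite), followed by the uniqueness clause of Theorem \ref{PROCESS1}. The only nitpick is a citation: invariance of $\utilde{\;\cdot\;}$ and of tameness under equivalence of groupoids over $Y$ is Lemma \ref{vectorswelldefined} (via Lemmas \ref{EQUIVGRPD} and \ref{ESSENTIALPULLBACK}), not Lemma \ref{EQUIVGRPD} alone.
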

\begin{proof} This is proved as Proposition \ref{scalarmult2}.
\end{proof}

Next we turn to the all-important process of {\it composing} spans.
This is the groupoidified analogue of matrix multiplication.  Suppose
we have a span from $X$ to $Y$ and a span from $Y$ to $Z$:
\[
\xymatrix{
& T\ar[dl]_{q_T} \ar[dr]^{p_T} & & S \ar[dl]_{q_S} \ar[dr]^{p_S}& \\
Z & & Y & & X
}
\]  
Then we say these spans are {\bf composable}.
In this case we can form a weak pullback in the middle:
\[
\xymatrix{
& & TS \ar[dl]_{\pi_T} \ar[dr]^{\pi_S} & &\\
& T\ar[dl]_{q_T} \ar[dr]^{p_T} & & S \ar[dl]_{q_S} \ar[dr]^{p_S}& \\
Z & & Y & & X
}
\]  
which gives a span from $X$ to $Z$:
\[\xymatrix{
 & TS \ar[dl]_{q_T \pi_T} \ar[dr]^{p_S\pi_S} & \\
 Z & & X \\
}\]
called the {\bf composite} $TS$.

When all the groupoids involved are discrete, the spans $S$ and $T$
are just matrices of sets, as explained in Section \ref{intro}.  We urge
the reader to check that in this case, the process of composing spans
is really just matrix multiplication, with cartesian product of sets
taking the place of multiplication of numbers, and disjoint union of
sets taking the place of addition:
\[    (TS)^k_j = \coprod_{j \in Y} T^k_j \times S^j_i . \]
So, composing spans of groupoids is a generalization of matrix
multiplication, with weak pullback playing the role of summing
over the repeated index $j$ in the formula above.

So, it should not be surprising that degroupoidification sends
a composite of tame spans to the composite of their corresponding
operators:

\begin{proposition*} If $S$ and $T$ are composable tame spans:
\[
\xymatrix{
& T\ar[dl]_{q_T} \ar[dr]^{p_T} & & S \ar[dl]_{q_S} \ar[dr]^{p_S}& \\
Z & & Y & & X
}
\]  
then the composite span
\[\xymatrix{
 & TS \ar[dl]_{q_T \pi_T} \ar[dr]^{p_S\pi_S} & \\
 Z & & X \\
}\]
is also tame, and
\[       \utilde{TS} = \utilde{T} \utilde{S}  .\]
\end{proposition*}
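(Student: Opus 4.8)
The plan is to deduce the identity from the uniqueness clause of the degroupoidification Theorem proved above (Theorem \ref{PROCESS1}). That theorem tells us that once we know a span is tame, its operator is the \emph{unique} linear operator obeying the characteristic relation $\utilde{R}\,\utilde{\Psi} = \utilde{R\Psi}$ for all tame $\Psi$ over the source. So it suffices to establish two things: first, that the composite span $TS$ is itself tame (so that $\utilde{TS}$ is defined at all); and second, that the operator $\utilde{T}\,\utilde{S}$ obeys the defining relation of $\utilde{TS}$, namely $\utilde{T}\,\utilde{S}\,\utilde{\Psi} = \utilde{(TS)\Psi}$ for every tame groupoid $\Psi$ over $X$. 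Granting these, the uniqueness clause forces $\utilde{TS} = \utilde{T}\,\utilde{S}$.

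The engine behind both points is associativity of the weak pullback up to equivalence: for any tame $v \maps \Psi \to X$ there is a canonical equivalence of groupoids over $Z$,
\[ (TS)\Psi \;\simeq\; T(S\Psi). \]
I would prove this by unwinding both iterated weak pullbacks explicitly. An object of either groupoid is the same package of data: objects $t \in T$, $s \in S$, $\psi \in \Psi$ together with isomorphisms $\beta \maps p_T(t) \to q_S(s)$ in $Y$ and $\alpha \maps p_S(s) \to v(\psi)$ in $X$; a morphism is a triple $(g \maps t \to t',\, f \maps s \to s',\, h \maps \psi \to \psi')$ making the two evident naturality squares commute (one over $Y$ built from $\beta$, one over $X$ built from $\alpha$). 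In fact the two sides have literally the same objects and morphisms, so the comparison is an \emph{isomorphism}, not merely an equivalence; and on both sides the functor down to $Z$ sends such an object to $q_T(t)$, so it is an isomorphism of groupoids over $Z$. This is the concrete form of the general coherence of weak pullbacks that also underlies the appendix lemmas on spans.

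With this comparison in hand the two required points follow. For tameness: since $S$ is tame and $\Psi$ is tame over $X$, the groupoid $S\Psi$ is tame over $Y$; since $T$ is tame, applying it to $S\Psi$ shows $T(S\Psi)$ is tame over $Z$; and tameness of a groupoid over $Z$ transfers across the isomorphism, since isomorphic groupoids over $Z$ have identical full inverse images at each $z \in Z$ (and more generally, for an equivalence over $Z$, equivalent fibers, hence equal cardinalities by Lemma \ref{EQUIVGRPD}). Thus $(TS)\Psi$ is tame over $Z$ for every tame $\Psi$, which is exactly the tameness of the span $TS$. For the operator identity I compute, for any tame $\Psi$ over $X$,
\[ \utilde{T}\,\utilde{S}\,\utilde{\Psi} \;=\; \utilde{T}\,\utilde{S\Psi} \;=\; \utilde{T(S\Psi)} \;=\; \utilde{(TS)\Psi}, \]
where the first two equalities are the defining relation of the Theorem applied to $\utilde{S}$ and then to $\utilde{T}$ (each over the appropriate base), and the last uses that the isomorphism $(TS)\Psi \iso T(S\Psi)$ over $Z$ yields equal cardinalities of the full inverse images, hence the same vector in $\R^{\u{Z}}$. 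Hence $\utilde{T}\,\utilde{S}$ satisfies the defining relation of $\utilde{TS}$, and uniqueness completes the argument.

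The only genuine work is the coherence bookkeeping in the associativity comparison: matching objects, morphisms, and the two commuting squares on the two sides, and confirming compatibility with the projections to $Z$ (and to $X$). The transfer of tameness across this comparison, while easy, must be stated with care, since tameness of a span is defined through the tameness of the full inverse images of the resulting groupoid over $Z$; it is invoking Lemma \ref{EQUIVGRPD} fiberwise over each $z \in Z$ that makes the transfer legitimate.
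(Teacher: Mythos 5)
Your proposal is correct and takes essentially the same route as the paper: its Lemma \ref{composition} likewise reduces everything to the associativity equivalence $(TS)\Psi \simeq T(S\Psi)$ (Proposition \ref{associativity}, proved there too by an explicit re-associating isomorphism of the nested-tuple groupoids, strictly compatible with the projections) combined with equivalence-invariance of degroupoidification (Theorem \ref{linops_equivspans1}) and the defining relation and uniqueness clause of Theorem \ref{PROCESS1}. Your handling of the tameness of $TS$ is spelled out a bit more explicitly than in the paper, but the substance is identical.
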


\begin{proof} This is proved as Lemma \ref{composition}.
\end{proof}

Besides addition and scalar multiplication, there is an extra
operation for groupoids over a groupoid $X$, which is the reason
groupoidification is connected to quantum mechanics.  Namely, we can
take their inner product:

\begin{definition}
Given groupoids $\Phi$ and $\Psi$ over $X$, we define
the {\bf inner product} $\ip{\Phi}{\Psi}$ to be this weak pullback:
\[
\xymatrix{
& \ip{\Phi}{\Psi} \ar[dl] \ar[dr] & \\
\Phi \ar[dr] & & \Psi \ar[dl] \\
& X &
}
\]
\end{definition}
\begin{definition}
\label{L2}
A groupoid $\Psi$ over $X$ is called {\bf square-integrable} if
$\ip{\Psi}{\Psi}$ is tame.  We define $L^2(X)$ to be the subspace of
$\R^{\u{X}}$ consisting of finite real linear combinations of vectors
$\utilde{\Psi}$ where $\Psi$ is square-integrable.
\end{definition}

Note that $L^2(X)$ is all of $\R^{\u{X}}$ when $\u{X}$
is finite.  The inner product of groupoids over $X$ makes $L^2(X)$
into a real Hilbert space:
\begin{theorem*}
Given a groupoid $X$, there is a unique inner product $\ip{\cdot}{\cdot}$ 
on the vector space $L^2(X)$ such that
\[ \ip{\utilde{\Phi}}{\utilde{\Psi}} = |\ip{\Phi}{\Psi}| \]
whenever $\Phi$ and $\Psi$ are square-integrable groupoids over $X$.
With this inner product $L^2(X)$ is a real Hilbert space.
\end{theorem*}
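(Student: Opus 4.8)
The plan is to reduce the entire statement to a single cardinality identity and then read off every required property. The key lemma I would prove first is that, for square-integrable groupoids $\Phi,\Psi$ over $X$,
\[ |\ip{\Phi}{\Psi}| = \sum_{[x]\in\u{X}} |\Aut(x)|\,\utilde{\Phi}([x])\,\utilde{\Psi}([x]). \]
To establish this I would compute the cardinality of the weak pullback $\ip{\Phi}{\Psi}$ by grouping its objects $(a,b,\alpha)$ according to the isomorphism class $[x]\in\u{X}$ of the common image $v(a)\iso v(b)$. Restricting to a fixed class, the resulting subgroupoid is equivalent to a weak pullback of the full inverse images over the connected component of $x$, which is equivalent to $\mathbf{B}\Aut(x)$; carrying out the bookkeeping of automorphisms — the isomorphism $\alpha$ ranges over an $\Aut(x)$-torsor and so contributes exactly a factor of $|\Aut(x)|$ — gives the summand $|\Aut(x)|\,\utilde{\Phi}([x])\,\utilde{\Psi}([x])$, using $\utilde{\Phi}([x])=|v^{-1}(x)|$. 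Here the earlier lemmas that equivalent groupoids have equal cardinality and that give the alternative cardinality formula do the heavy lifting. I expect this identity to be the main obstacle, since it is where the precise power of $|\Aut(x)|$ must be pinned down and where the weak-pullback combinatorics is genuinely used.

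Granting the lemma, well-definedness, bilinearity, symmetry, and positive-definiteness all follow cheaply, because the right-hand side depends only on the vectors $\utilde{\Phi},\utilde{\Psi}$ and not on the groupoids presenting them. I would then define $\ip{\psi}{\phi} = \sum_{[x]\in\u{X}} |\Aut(x)|\,\psi([x])\,\phi([x])$ for $\psi,\phi\in L^2(X)$ and check absolute convergence: each square-integrable $\utilde{\Psi}$ satisfies $\sum_{[x]}|\Aut(x)|\,\utilde{\Psi}([x])^2 = |\ip{\Psi}{\Psi}| < \infty$ by the lemma, so every finite real linear combination of such vectors has finite weighted norm, and Cauchy–Schwarz then bounds the defining sum. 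This formula is visibly symmetric and bilinear and, by the lemma, agrees with $|\ip{\Phi}{\Psi}|$ on the spanning vectors; symmetry is consistent on the groupoid side as well, since swapping the legs of the weak pullback shows $\ip{\Phi}{\Psi}\iso\ip{\Psi}{\Phi}$. Positive-definiteness is immediate: $\ip{\psi}{\psi} = \sum_{[x]}|\Aut(x)|\,\psi([x])^2 \geq 0$, with equality only when $\psi=0$, because every weight $|\Aut(x)|$ is a positive integer. Uniqueness is then formal: any inner product obeying $\ip{\utilde{\Phi}}{\utilde{\Psi}} = |\ip{\Phi}{\Psi}|$ is forced on all of $L^2(X)\times L^2(X)$ by bilinearity, as every vector of $L^2(X)$ is a finite real combination of vectors $\utilde{\Psi}$.

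For the Hilbert-space claim I would exhibit the isometric embedding of $L^2(X)$ into the weighted sequence space $\ell^2(\u{X})$ with weights $|\Aut(x)|$, which is complete by the standard argument. When $\u{X}$ is finite this embedding is onto and $L^2(X)=\R^{\u{X}}$ is already a finite-dimensional, hence complete, inner-product space. When $\u{X}$ is infinite the subspace of finite linear combinations is only a dense pre-Hilbert space inside $\ell^2(\u{X})$, so I would read the final sentence as asserting that the form above is a genuine real inner product whose completion is a Hilbert space — the delicate point being completeness rather than the algebra of the inner product, which is entirely governed by the cardinality identity.
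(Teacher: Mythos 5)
Your key lemma is true, and granting it, your treatment of existence and uniqueness is correct but genuinely different from the paper's. The paper never writes down an explicit formula for the inner product: it defines $\ip{\utilde{\Phi}}{\utilde{\Psi}} = |\ip{\Phi}{\Psi}|$ on the spanning vectors, reduces existence to the well-definedness claim that $\sum_{i}\alpha_i\utilde{\Psi}_i = \sum_{j}\beta_j\utilde{\Phi}_j = 0$ implies $\sum_{i,j}\alpha_i\beta_j\,|\ip{\Psi_i}{\Phi_j}| = 0$, and disposes of this by an argument said to closely resemble the existence proof of Theorem \ref{PROCESS1}, leaving completeness to the reader. Your weighted-$\ell^2$ formula $\ip{\psi}{\phi} = \sum_{[x]}|\Aut(x)|\,\psi([x])\,\phi([x])$ makes well-definedness, bilinearity, symmetry, positive-definiteness, and uniqueness all immediate, at the cost of proving the cardinality identity; your sketch of that identity (decompose the weak pullback over $\u{X}$; over a fixed class $[x]$ the projection to $v^{-1}(x)\times w^{-1}(x)$ is a covering whose fibers are the $\Aut(x)$-torsors of isomorphisms $\alpha$, so cardinality picks up a factor $|\Aut(x)|$) is sound, resting exactly on Lemmas \ref{EQUIVGRPD} and \ref{ALTCARD} as you say.

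The gap is in your last paragraph, and it is genuine. You assert that when $\u{X}$ is infinite, $L^2(X)$ is ``only a dense pre-Hilbert space'' inside the weighted $\ell^2$ space, and you propose to weaken the final sentence of the theorem to a statement about the completion. This is false, and the theorem is provable exactly as stated: $L^2(X)$ is the \emph{entire} weighted $\ell^2$ space, hence complete. Your error is treating the spanning vectors $\utilde{\Psi}$ as if they were finitely supported. They need not be: \emph{every} nonnegative $\psi \in \R^{\u{X}}$ with $\sum_{[x]}|\Aut(x)|\,\psi([x])^2 < \infty$ equals $\utilde{\Psi}$ for a single square-integrable groupoid $\Psi$ over $X$. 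Indeed, every nonnegative real number is a countable sum of reciprocals of positive integers, so for each $[x]$ in the support of $\psi$ one can choose a locally finite groupoid $\Lambda_{[x]}$ (a disjoint union of one-object groupoids) with $|\Lambda_{[x]}| = \psi([x])$, and let $\Psi$ be the disjoint union of the $\Lambda_{[x]}$, mapped to $X$ by sending $\Lambda_{[x]}$ to $x$; this is the same fact invoked at the start of the proof of Theorem \ref{PROCESS1}, and square-integrability of this $\Psi$ follows from your own lemma. Since an arbitrary element of the weighted $\ell^2$ space is the difference of its positive and negative parts, each nonnegative with finite weighted norm, every such element lies in $L^2(X)$. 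So the ``delicate point'' you flag dissolves once your key lemma is applied to this class of groupoids; as written, your proposal declines to prove the completeness assertion, which is part of the statement.
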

\begin{proof} This is proven later as Theorem \ref{innerprod_theorem}. 
\end{proof}
We can always complexify $L^2(X)$ and obtain a complex
Hilbert space.  We work with real coefficients simply to admit that
groupoidification as described here does not make essential use of the
complex numbers.  Morton's generalization involving groupoids over
$\U(1)$ is one way to address this issue \cite{Morton:2006}.

The inner product of groupoids over $X$ has the properties one 
would expect:
\begin{proposition*}
Given a groupoid $\Lambda$ and square-integrable
groupoids $\Phi$, $\Psi$, and $\Psi'$ over $X$, we have the
following equivalences of groupoids:
\begin{enumerate}
\item
\[\ip{\Phi}{\Psi} \simeq \ip{\Psi}{\Phi}.\] 
\item
\[\ip{\Phi}{\Psi + \Psi'} \simeq \ip{\Phi}{\Psi} + \ip{\Phi}{\Psi'}.\] 
\item
\[\ip{\Phi}{\Lambda \times \Psi} \simeq \Lambda \times \ip{\Phi}{\Psi}.\]
\end{enumerate}
\end{proposition*}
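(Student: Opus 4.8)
The plan is to work directly with the explicit description of the weak pullback given above, writing the forgetful functors as $v_\Phi \maps \Phi \to X$, $v_\Psi \maps \Psi \to X$, and so on. Recall that an object of $\ip{\Phi}{\Psi}$ is a triple $(\phi, \psi, \alpha)$ with $\phi \in \Phi$, $\psi \in \Psi$, and $\alpha \maps v_\Phi(\phi) \to v_\Psi(\psi)$ an isomorphism in $X$, while a morphism to $(\phi', \psi', \alpha')$ is a pair $(f, g)$ of morphisms $f \maps \phi \to \phi'$ and $g \maps \psi \to \psi'$ with $v_\Psi(g)\, \alpha = \alpha'\, v_\Phi(f)$. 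For each of the three claims I would exhibit an explicit functor between the two groupoids, check that it is a functor, and observe that it is invertible; since an isomorphism of groupoids is in particular an equivalence, this establishes each $\simeq$.

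For part 1, I would define $F \maps \ip{\Phi}{\Psi} \to \ip{\Psi}{\Phi}$ by $(\phi, \psi, \alpha) \mapsto (\psi, \phi, \alpha^{-1})$ on objects and $(f,g) \mapsto (g,f)$ on morphisms. The only thing to verify is that $F$ sends morphisms to morphisms: the square condition $v_\Psi(g)\,\alpha = \alpha'\, v_\Phi(f)$ is equivalent, after multiplying on the left by $(\alpha')^{-1}$ and on the right by $\alpha^{-1}$, to $v_\Phi(f)\, \alpha^{-1} = (\alpha')^{-1}\, v_\Psi(g)$, which is exactly the square condition in $\ip{\Psi}{\Phi}$. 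Applying the same construction in the other direction produces an inverse, so $F$ is an isomorphism.

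For part 2, I would use that $\Psi + \Psi'$ is the disjoint union, so that no object or morphism of $\Psi + \Psi'$ mixes the two summands, and that the functor $\Psi + \Psi' \to X$ restricts to $v_\Psi$ and $v_{\Psi'}$. Consequently every object $(\phi, \theta, \alpha)$ of $\ip{\Phi}{\Psi + \Psi'}$ has $\theta$ lying in exactly one summand, and likewise for morphisms; sorting the triples according to this summand yields an isomorphism onto the disjoint union $\ip{\Phi}{\Psi} + \ip{\Phi}{\Psi'}$. For part 3, recalling that the functor on $\Lambda \times \Psi$ is $v_\Psi \pi_2$, an object of $\ip{\Phi}{\Lambda \times \Psi}$ is a quadruple $(\phi, \lambda, \psi, \alpha)$ with $\alpha \maps v_\Phi(\phi) \to v_\Psi(\psi)$, and a morphism carries an unconstrained component in $\Lambda$ together with a pair $(f,g)$ obeying the same square as before. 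The map $(\phi, \lambda, \psi, \alpha) \mapsto (\lambda, (\phi, \psi, \alpha))$, with the evident action on morphisms, is then a bijection respecting composition, i.e.\ an isomorphism onto $\Lambda \times \ip{\Phi}{\Psi}$.

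Since these three constructions are honest isomorphisms of groupoids built by reshuffling the defining data of the weak pullback, none of the steps is genuinely hard; the only point demanding care is the bookkeeping of the commuting-square conditions on morphisms, and in particular checking in part 1 that inverting the connecting isomorphism $\alpha$ really does convert one square condition into the other. Everything else is routine, and because each map is an isomorphism rather than merely an equivalence, no separate essential-surjectivity or full-faithfulness argument is needed.
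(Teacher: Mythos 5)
Your proposal is correct and takes essentially the same approach as the paper: unwinding the explicit description of the weak pullback and exhibiting the evident isomorphisms---inverting $\alpha$ for part 1, splitting the disjoint union for part 2, and reshuffling the product (using that the functor on $\Lambda \times \Psi$ is $v_\Psi \pi_2$) for part 3. The paper's own proof is just a terser statement of these same three observations, so your version merely supplies the bookkeeping details the paper leaves to the reader.
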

\begin{proof} Here equivalence of groupoids is defined in the usual
way---see Definition \ref{equivalence_of_groupoids}.  This result is
proved below as Proposition
\ref{innerprodandadjointprops}. \end{proof}

Just as we can define the adjoint of an operator between
Hilbert spaces, we can define the adjoint of a span of groupoids:

\begin{definition}
Given a span of groupoids from $X$ to $Y$:
\[
\xymatrix{
& S\ar[dl]_{q} \ar[dr]^{p} & \\
Y & & X
}
\]
its {\bf adjoint} $S^{\dagger}$ is the following span of
groupoids from $Y$ to $X$:
\[
\xymatrix{
& S\ar[dl]_{p} \ar[dr]^{q} & \\
X & & Y
}
\]
\end{definition}

We warn the reader that the adjoint of a tame span may not be tame,
due to an asymmetry in the criterion for tameness, Theorem
\ref{matrix1}.  But of course a span of finite groupoids is tame, and
so is its adjoint.  Moreover, we have:

\begin{proposition*}
Given a span
\[
\xymatrix{
& S \ar[dl]_{q} \ar[dr]^{p} & \\
Y & & X
}
\]
and a pair $v \maps \Psi \to X$, $w \maps \Phi \to Y$ of groupoids
over $X$ and $Y$, respectively, there is an equivalence of groupoids
\[ \langle \Phi,S\Psi \rangle 
\simeq \langle S^\dagger\Phi,\Psi \rangle. \]
\end{proposition*}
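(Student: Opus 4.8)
The plan is to construct an explicit comparison functor
$F \maps \ip{\Phi}{S\Psi} \to \ip{S^\dagger\Phi}{\Psi}$ that is a bijection on both objects and morphisms, hence a strict isomorphism of groupoids and in particular the asserted equivalence. Everything reduces to unwinding the nested weak pullbacks on each side and matching the data; no tameness hypothesis is needed, since the statement concerns only the underlying groupoids.

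First I would spell out the objects, writing $w \maps \Phi \to Y$ for the structure functor of $\Phi$. An object of $S\Psi$ is a triple $(s,a,\alpha)$ with $s\in S$, $a\in\Psi$, and $\alpha \maps p(s)\to v(a)$ an isomorphism in $X$, and $S\Psi$ lives over $Y$ via $q\pi_S$. Unpacking the outer weak pullback, an object of $\ip{\Phi}{S\Psi}$ is then a quintuple $(\phi,s,a,\alpha,\gamma)$ with $\phi\in\Phi$, an isomorphism $\alpha\maps p(s)\to v(a)$ in $X$, and an isomorphism $\gamma\maps w(\phi)\to q(s)$ in $Y$. On the other side, an object of $S^\dagger\Phi$ is a triple $(s,\phi,\delta)$ with $\delta$ an isomorphism in $Y$ between $q(s)$ and $w(\phi)$, living over $X$ via $p\pi_S$; so an object of $\ip{S^\dagger\Phi}{\Psi}$ is a quintuple $(s,\phi,a,\delta,\epsilon)$ with $\epsilon$ an isomorphism in $X$ between $p(s)$ and $v(a)$. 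Thus both sides carry exactly the data of an object of $S$, an object of $\Phi$, an object of $\Psi$, one isomorphism in $X$, and one isomorphism in $Y$.

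I would then define $F$ on objects by
\[
(\phi,s,a,\alpha,\gamma)\;\longmapsto\;(s,\phi,a,\gamma^{-1},\alpha),
\]
inverting the $Y$-isomorphism to account for the fact that forming $S^\dagger$ interchanges the two legs $p$ and $q$ of $S$. A morphism on the left is a triple $(f,k,\ell)$ of morphisms in $\Phi$, $S$, and $\Psi$ subject to two commuting-square conditions, one over $Y$ from the outer pullback and one over $X$ from the definition of $S\Psi$, and I would send it to $(k,f,\ell)$. The $X$-square is literally the same on both sides, so the only real check is that the left-hand $Y$-square is equivalent to the right-hand one. This is a one-line manipulation: inverting the identity $\gamma'\circ w(f)=q(k)\circ\gamma$ and rearranging gives $w(f)\circ\gamma^{-1}=(\gamma')^{-1}\circ q(k)$, which is exactly $\delta'\circ q(k)=w(f)\circ\delta$ with $\delta=\gamma^{-1}$. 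Preservation of identities and composites is then immediate, since $F$ merely permutes and inverts components.

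Finally I would note that $F$ is visibly bijective on objects and on hom-sets, with inverse sending $(s,\phi,a,\delta,\epsilon)$ to $(\phi,s,a,\epsilon,\delta^{-1})$, so $F$ is an isomorphism of groupoids, which is stronger than the claimed $\simeq$. The only delicate point is pure bookkeeping: one must fix the left/right conventions in the weak-pullback definition and in the definition of $S\Psi$ consistently, since these determine the orientations of $\alpha,\gamma,\delta,\epsilon$ and hence precisely which isomorphisms $F$ must invert. I expect this orientation tracking, rather than any conceptual difficulty, to be the main obstacle.
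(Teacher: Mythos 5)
Your proof is correct, and in fact establishes slightly more than the statement asks (a strict isomorphism of groupoids rather than merely an equivalence), but it takes a genuinely different route from the paper. The paper never unwinds the nested weak pullbacks for this proposition: it regards $v \maps \Psi \to X$ and $w \maps \Phi \to Y$ as spans with one leg over the terminal groupoid $1$, observes that $\ip{\Phi}{S\Psi}$ is then the composite of the spans $S\Psi$ and $\Phi$ while $\ip{S^\dagger\Phi}{\Psi}$ is the composite of $S^\dagger\Phi$ and $\Psi$, and deduces the equivalence from associativity of span composition, Proposition \ref{associativity} (which is itself proved by an explicit functor between iterated weak pullbacks). Your argument is, in effect, an inline re-derivation of exactly the special case of that associativity lemma in which the two outer spans have one leg over $1$. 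The paper's route buys modularity and a conceptual moral---adjointness of spans is nothing but associativity in the bicategory of spans---while your route buys self-containedness, the stronger conclusion of isomorphism, and freedom from a small hypothesis mismatch in the paper: the cited Proposition \ref{associativity} is stated for \emph{tame} spans, whereas the present proposition assumes no tameness (this is harmless, since tameness is never used in the associativity proof, but your version makes the independence from tameness explicit, as you note). Your worry about orientation conventions is real but, as you suspect, only bookkeeping: the paper itself uses slightly different conventions for the direction of the isomorphism in a weak pullback in its formal definition versus, say, the proof of Lemma \ref{linearity}, and any consistent choice yields canonically isomorphic groupoids, at the cost of inverting $\gamma$ exactly as you do when passing from $S$ to $S^\dagger$.
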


\begin{proof} 
This is proven as Proposition \ref{innerprod_adjoint}.
\end{proof}

We say what it means for spans to be `equivalent' in Definition
\ref{EQUIVALENCE}.  Equivalent tame spans give the same linear
operator: $S \simeq T$ implies $\utilde{S} = \utilde{T}$.  Spans of
groupoids obey many of the basic laws of linear algebra---up to
equivalence.  For example, we have these familiar properties of
adjoints:

\begin{proposition*}
Given spans
\[
\xymatrix{
& T \ar[dl]_{q_T} \ar[dr]^{p_T} & & & S \ar[dl]_{q_S} \ar[dr]^{p_S} & \\
Z & & Y & Y & & X
}
\]
and a groupoid $\Lambda$, we have the following equivalences of 
spans:
\begin{enumerate}
\item $(TS)^\dagger \simeq S^\dagger T^\dagger$
\item $(S+T)^\dagger \simeq S^\dagger + T^\dagger$
\item $(\Lambda S)^\dagger \simeq \Lambda S^\dagger$
\end{enumerate}
\end{proposition*}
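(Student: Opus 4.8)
The plan is to prove all three equivalences by exhibiting explicit functors between the apex groupoids that strictly intertwine the two legs, and then to invoke the definition of equivalence of spans (Definition \ref{EQUIVALENCE}); no analytic input is needed, so the argument applies to arbitrary (not necessarily tame) spans.

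Parts (2) and (3) are essentially formal. For (2), the span $S+T$ from $X$ to $Y$ has as its apex the coproduct groupoid $S+T$, with legs $[q_S,q_T]\maps S+T\to Y$ and $[p_S,p_T]\maps S+T\to X$ induced by the universal property of the coproduct; passing to the adjoint merely interchanges these two legs. On the other side, $S^\dagger + T^\dagger$ is assembled from $S^\dagger$ (legs $p_S,q_S$) and $T^\dagger$ (legs $p_T,q_T$), and again by the universal property its apex is $S+T$ with legs $[p_S,p_T]$ and $[q_S,q_T]$. Thus the two spans have literally the same apex and the same legs, and the identity functor exhibits the equivalence; the only thing to remark is that forming coproduct legs commutes with interchanging legs. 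Part (3) is the same observation for the scalar product: both $(\Lambda S)^\dagger$ and $\Lambda S^\dagger$ have apex $\Lambda\times S$ and legs $p\pi_2\maps \Lambda\times S\to X$ and $q\pi_2\maps \Lambda\times S\to Y$, so the identity functor again suffices.

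The substance is in part (1). Recall that the composite $TS$ is the weak pullback of $p_T\maps T\to Y$ and $q_S\maps S\to Y$, so an object is a triple $(t,s,\alpha)$ with $\alpha\maps p_T(t)\to q_S(s)$ an isomorphism in $Y$, and the span $(TS)^\dagger$ from $Z$ to $X$ carries legs $p_S\pi_S\maps TS\to X$ and $q_T\pi_T\maps TS\to Z$. On the other hand $S^\dagger T^\dagger$ is the composite of $T^\dagger$ (from $Z$ to $Y$) and $S^\dagger$ (from $Y$ to $X$), hence the weak pullback of $q_S\maps S\to Y$ and $p_T\maps T\to Y$, whose objects are triples $(s,t,\beta)$ with $\beta\maps q_S(s)\to p_T(t)$, and whose legs are $p_S$ and $q_T$ post-composed with the respective projections. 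I would define a functor $F$ from $TS$ to the apex of $S^\dagger T^\dagger$ by $(t,s,\alpha)\mapsto (s,t,\alpha^{-1})$ on objects and $(f,g)\mapsto(g,f)$ on morphisms, and check that $F$ is an isomorphism of groupoids (its inverse being the same swap-and-invert recipe) which intertwines both legs on the nose, since $p_S$ and $q_T$ are read off the unchanged factors.

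The one point demanding care---the main obstacle, such as it is---is the bookkeeping of directions: one must verify that the commuting-square condition defining a morphism of $TS$, namely $q_S(g)\circ\alpha=\alpha'\circ p_T(f)$, becomes after inverting $\alpha$ and $\alpha'$ exactly the commuting-square condition $p_T(f)\circ\alpha^{-1}=\alpha'^{-1}\circ q_S(g)$ defining a morphism of $S^\dagger T^\dagger$, so that $F$ is genuinely functorial. Once this is checked, $F$ is an isomorphism, hence \emph{a fortiori} an equivalence of spans, giving $(TS)^\dagger\simeq S^\dagger T^\dagger$. The symmetry of the weak pullback under $\alpha\mapsto\alpha^{-1}$ is precisely what makes adjoints of spans mimic adjoints of operators.
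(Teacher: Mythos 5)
Your proof is correct and takes essentially the same approach as the paper, which disposes of these equivalences by declaring them clear from the definitions of composition, addition, and scalar multiplication of spans (Propositions \ref{adjoint_comp} and \ref{adjoint_add}); your explicit swap-and-invert functor on the weak pullback, together with the identity functors in parts (2) and (3), is exactly the content of that implicit argument. The only discrepancy is notational: your convention for the direction of the structure isomorphism in a weak pullback is the reverse of the paper's, which is harmless since your functor inverts it anyway.
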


\begin{proof}
These will follow easily after we show addition and composition 
of spans and scalar multiplication are well defined.
\end{proof}

\noindent
In fact, degroupoidification is a functor 
\[         \utilde{\;\;\;} \,\maps \Span \to \Vect   \]
where $\Vect$ is the category of real vector spaces and linear
operators, and $\Span$ is a category with 
\begin{itemize}
\item
groupoids as objects,
\item
equivalence classes of tame spans as morphisms,
\end{itemize}
where composition comes from the method of composing spans we have
just described.  We prove this fact in Theorem \ref{functor}.  A
deeper approach, which we shall explain elsewhere, is to think of
$\Span$ as a weak 2-category (i.e., bicategory) with:
\begin{itemize}
\item
groupoids as objects,
\item
tame spans as morphisms,
\item
isomorphism classes of maps of spans as 2-morphisms
\end{itemize}
Then degroupoidification becomes a functor between weak 2-categories:
\[         \utilde{\;\;} \, \maps \Span \to \Vect   \]
where $\Vect$ is viewed as a weak 2-category with only identity
2-morphisms.  So, groupoidification is not merely a way of replacing
linear algebraic structures involving the real numbers with purely
combinatorial structures.  It is also a form of `categorification'
\cite{BaezDolan:1998}, where we take structures defined in the
category $\Vect$ and find analogues that live in the weak 2-category
$\Span$.

We could go even further and think of $\Span$ as a weak 3-category with 
\begin{itemize}
\item
groupoids as objects,
\item
tame spans as morphisms,
\item
maps of spans as 2-morphisms,
\item
maps of maps of spans as 3-morphisms.
\end{itemize}
However, we have not yet found a use for this further structure.

Lastly we would like to say a few words about tensors and traces. 
We can define the {\bf tensor product} of groupoids $X$ and $Y$ 
to be their cartesian product $X \times Y$, and the {\bf tensor 
product} of spans
\[
\xymatrix{
& S \ar[dl]_{q} \ar[dr]^{p} & & & S' \ar[dl]_{q'} \ar[dr]^{p'} & \\
Y & & X & Y' & & X'
}
\]
to be the span
\[
\xy
(0,15)*{S\times S'}="B";(-15,0)*{Y\times Y'}="A";(15,0)*{X\times X'}="C";
{\ar_{q \times q'} (-2,13);(-13,2)};
{\ar^{p \times p'} (2,13);(13,2)};
\endxy
\]
Defining the tensor product of maps of spans in a similar way, we
conjecture that $\Span$ actually becomes a \textit{symmetric monoidal}
weak 2-category\cite{McCrudden}.  If this is true, then
degroupoidification should be a `lax symmetric monoidal functor',
thanks to the natural map
\[       \R^{\u{X}} \tensor \R^{\u{Y}} \to \R^{\u{X \times Y}} \, .\]
The word `lax' refers to the fact that this map is not an isomorphism
of vector spaces unless either $X$ or $Y$ has finitely many isomorphism
classes of objects.  In the next section we present an alternative
approach to degroupoidification that avoids this problem.  The idea
is simple: instead of working with the vector space $\R^{\u{X}}$ consisting
of all functions on $\u{X}$, we work with the vector space $\R[\u{X}]$
having $\u{X}$ as its basis.  Then we have
\[       \R[\u{X}] \tensor \R[\u{Y}] \iso \R[\u{X \times Y}]  .\]
In fact both approaches to groupoidification have their own advantages,
and they are closely related, since
\[       \R^{\u{X}} \iso \R[\u{X}]^*  \, .\]

Regardless of these nuances, the important thing about the `monoidal'
aspect of degroupoidification is that it lets us mimic all the usual
manipulations for tensors with groupoids replacing vector spaces.  
Physicists call a linear map
\[          S \maps V_1 \tensor \cdots \tensor V_m \to 
                    W_1 \tensor \cdots \tensor W_n  \]
a \textbf{tensor}, and denote it by an expression
\[              S^{j_1 \cdots j_n}_{i_1 \cdots i_m}  \]
with one subscript for each `input' vector space $V_1, \dots, V_m$,
and one superscript for each `output' vector space $W_1, \dots, W_n$.
In the traditional approach to tensors, these indices label bases of
the vector spaces in question.  Then the expression $S^{j_1 \cdots
j_n}_{i_1 \cdots i_m}$ stands for an array of numbers: the components
of the tensor $S$ with respect to the chosen bases.  This lets us
describe various operations on tensors by multiplying such expressions
and summing over indices that appear repeatedly, once as a superscript
and once as a subscript.  In the more modern `string diagram'
approach, these indices are simply names of input and output wires for
a black box labelled $S$:
\begin{center}
\begin{picture}(60,115)(0,-10)
 \includegraphics[scale=0.35]{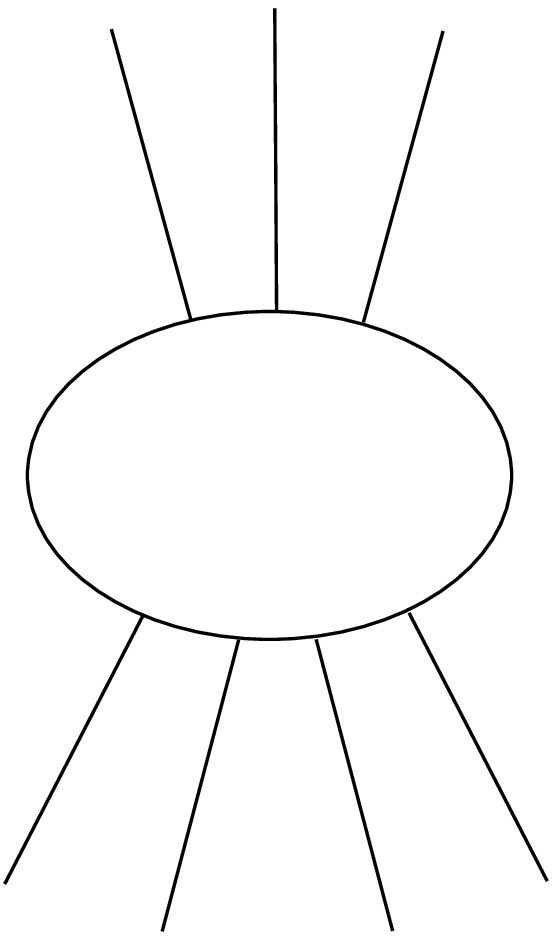}
 \put(-62,-5){$i_1$}
 \put(-43,-9){$i_2$}
 \put(-20,-9){$i_3$}
 \put(-1,-5){$i_4$}
 \put(-51,96){$j_1$}
 \put(-33,98){$j_2$}
 \put(-14,96){$j_3$}
 \put(-32,44){$S$}
\end{picture}
\end{center}
Here we are following physics conventions, where inputs are at
the bottom and outputs are at the top.  In this approach, when
an index appears once as a superscript and once as a subscript,
it means we attach an output wire of one black box to an input of another.

The most famous example is matrix multiplication:
\[   (TS)^k_i = T_j^k S_i^j. \]
Here is the corresponding string diagram:
\begin{center}
\begin{picture}(60,110)(-20,0)
\includegraphics[scale=0.3]{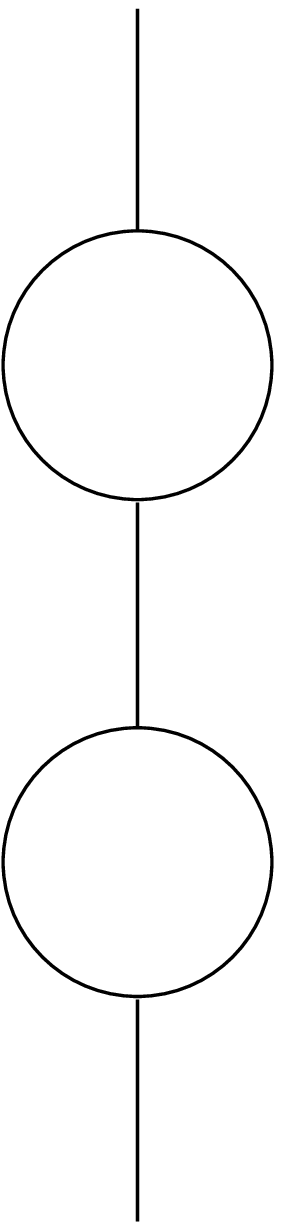}
 \put(-20,94){$k$}
 \put(-20,50){$j$}
 \put(-20,8){$i$}
 \put(-16,71){$T$}
 \put(-16,28){$S$}  
\end{picture}
\end{center}
Another famous example is the trace of a linear operator
$S \maps V \to V$, which is the sum of its diagonal entries:
\[            \tr(S) = S^i_i  \]
As a string diagram, this looks like:
\begin{center}
\begin{picture}(60,120)(12,0)
\includegraphics[scale=0.4]{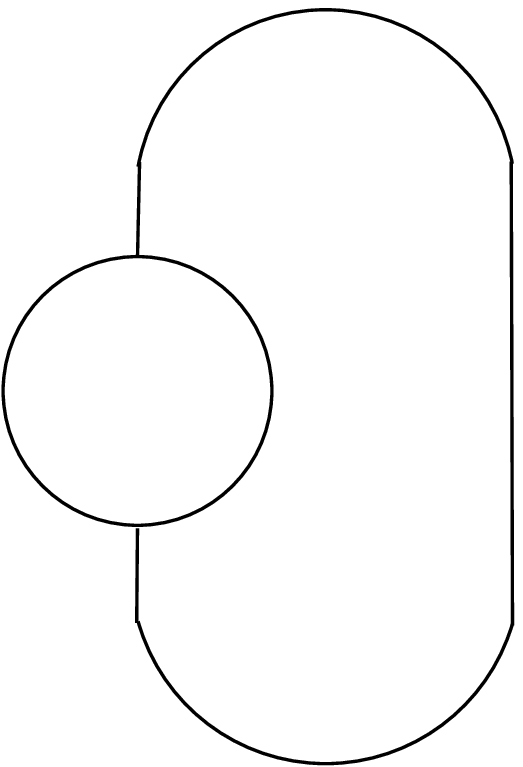} 
 \put(-51,66){$i$}
 \put(-51,16){$i$}
 \put(-49,40){$S$}
\end{picture}
\end{center}
Here the sum is only guaranteed to converge if $V$ is finite-dimensional,
and indeed the full collection of tensor operations is defined only for
finite-dimensional vector spaces.

All these ideas work just as well with spans of groupoids
\[
\xy
(0,15)*{S}="B";(-15,0)*{Y_1 \times \cdots \times Y_n}="A";(15,0)*{X_1 \times \cdots \times X_m}="C";
{\ar_{q} (-2,13);(-13,2)};
{\ar^{p} (2,13);(13,2)};
\endxy
\]
\noindent
taking the place of tensors.  The idea is that \textit{weak pullback 
takes the place of summation over repeated indices}.  Even better, there 
is no need to impose any finiteness or tameness conditions until we 
degroupoidify.  

We have already seen the simplest example: composition of spans via
weak pullback is a generalization of matrix multiplication.
For a trickier one, emphasized by Urs Schreiber \cite{Urs}, consider 
the trace of a span:
\[
\xymatrix{
& S \ar[dl]_q \ar[dr]^p & \\
X & & X  
}
\]
Here it is a bit hard to see which weak pullback to do!
We can get around this problem using an alternate formula for
the trace of a linear map $S \maps V \to V$:
\begin{equation}
\label{trace}
            \tr(S) = g_{jk} S^j_i g^{ik} 
\end{equation}
Here $g_{jk}$ is the tensor corresponding to an arbitrary inner product
$g \maps V \tensor V \to \R$.  In the finite-dimensional case, any
such inner product determines an isomorphism $V \cong V^*$, so we
can interpret the adjoint of $g$ as a linear map $\tilde{g} \maps \R \to 
V \tensor V$, and the tensor for this is customarily written as 
$g$ with superscripts: $g^{ik}$.  Equation
\ref{trace} says that the operator
\[ 
       \R \stackrel{\tilde{g}}{\longrightarrow}
        V \tensor V \stackrel{S \otimes 1}{\longrightarrow}
        V \tensor V \stackrel{g}{\longrightarrow}
       \R
\]
is multiplication by $\tr(S)$.   We can draw $g$ 
as a `cup' and $\tilde{g}$ as a `cap', giving this string diagram:
\begin{center}
\begin{picture}(60,120)(12,0)
\includegraphics[scale=0.4]{trace.eps} 
 \put(-0,66){$k$}
 \put(-0,17){$k$}
 \put(-51,66){$j$}
 \put(-51,17){$i$}
 \put(-49,40){$S$}
\end{picture}
\end{center}

Now let us see how to implement this formula for the trace at the
groupoidified level, to define the trace of a span of groupoids.
Any groupoid $X$ automatically comes equipped with a span
\[
\xy
(0,15)*{\emph{ }X\emph{ }}="B";(-15,0)*{1}="A";(15,0)*{X\times X}="C";
{\ar_{} (-2,13);(-13,2)};
{\ar^{\Delta} (2,13);(13,2)};
\endxy
\]
where $\Delta$ is the diagonal map
and the left-hand arrow is the unique functor to the \textbf{terminal 
groupoid}---that is, the groupoid $1$ with one object and one morphism.  
We can check that at least when $\u{X}$ is finite, degroupoidifying
this span gives an operator
\[
\begin{array}{cccl}
g \maps & \R^{\u{X}} \tensor \R^{\u{X}} & \to & \R  \\
        &  \phi \tensor \psi        & \mapsto & \langle \phi, \psi \rangle 
\end{array}
\]
corresponding to the already described inner product on $\R^{\u{X}}$.  
Similarly, the span
\[\xy
(0,15)*{\emph{ }X\emph{ }}="B";(-15,0)*{X\times X}="A";(15,0)*{1}="C";
{\ar_{} (-2,13);(-13,2)};
{\ar^{\Delta} (2,13);(13,2)};
\endxy\]
degroupoidifies to give the operator
\[  \tilde{g} \maps \R  \to \R^{\u{X}} \tensor \R^{\u{X}} \, .\]
So, to implement Equation \ref{trace} at the level of groupoids 
and define the trace of this span:
\[
\xymatrix{
& S \ar[dl]_q \ar[dr]^p & \\
X & & X  
}
\]
we should take the composite of these three spans:
\[\xy
(0,15)*{S \times X}="B";(-15,0)*{X\times X}="A";(15,0)*{X\times X}="C";
{\ar_{q\times 1} (-2,13);(-13,2)};
{\ar^{p\times 1} (2,13);(13,2)};
(-30,15)*{X}="D";(-45,0)*{1}="E";
{\ar_{} (-32,13);(-43,2)};
{\ar^{\Delta} (-28,13);(-17,2)};
(30,15)*{X}="F";(45,0)*{1}="G";
{\ar_{\Delta} (28,13);(17,2)};
{\ar^{} (32,13);(43,2)};
\endxy
\]
The result is a span from $1$ to $1$, whose apex is a groupoid we
define to be the {\bf trace} $\tr(S)$.  We leave it as an
exercise to check the following basic properties of the trace:

\begin{proposition*}
Given a span of groupoids
\[
\xymatrix{
& S \ar[dl]_q \ar[dr]^p & \\
X & & X  
}
\]
its trace $\tr(S)$ is equivalent to the groupoid for which:
\begin{itemize}
\item an object is a pair $(s,\alpha)$ consisting of
an object $s \in S$ and a morphism $\alpha \maps p(s) \to q(s)$;
\item a morphism from $(s,\alpha)$ to $(s',\alpha')$ is a morphism
$f \maps s \to s'$ such that
\[
\xymatrix{
p(s) \ar[d]_{p(f)} \ar[r]^{\alpha} & q(s) \ar[d]^{q(f)} \\
p(s') \ar[r]_{\alpha'} & q(s')
}
\]
commutes.
\end{itemize}
\end{proposition*}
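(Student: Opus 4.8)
The plan is to compute the iterated weak pullback that defines $\tr(S)$ explicitly, and then to exhibit an equivalence with the groupoid $T$ described in the statement (objects $(s,\alpha)$ with $\alpha \maps p(s) \to q(s)$, morphisms $f\maps s\to s'$ making the displayed square commute). Since composition of spans is associative up to equivalence, I may form the threefold composite in any convenient order and fold the middle span $S \times X$ against each of the two ``diagonal'' spans in turn. Unwinding the construction of $S\Psi$ applied twice, the first weak pullback glues the right leg $p \times 1 \maps S \times X \to X \times X$ against the copy of $\Delta$, and the second glues the resulting left leg against the other copy of $\Delta$; the only subtlety is that an isomorphism in $X \times X$ into a diagonal object $(x,x)$ is a \emph{pair} of isomorphisms, the second component of which ties together the factor $X$ that $S \times X$ merely carries along.

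Tracing through both weak pullbacks, an object of the apex groupoid $\tr(S)$ is an object $s \in S$ together with auxiliary objects $x, x', y \in X$ and isomorphisms $\beta_1 \maps x \to p(s)$, $\beta_2 \maps x \to x'$, $\gamma_1 \maps q(s) \to y$, $\gamma_2 \maps x' \to y$ (the $\beta$'s from the right-hand diagonal, the $\gamma$'s from the left-hand one). A morphism is a morphism $f \maps s \to s'$ in $S$ together with morphisms relating all the auxiliary objects, subject to the two commuting squares inherited from the two weak pullbacks. The key observation is that the auxiliary data is contractible: the four isomorphisms compose to a single isomorphism $\alpha := \gamma_1^{-1}\gamma_2\beta_2\beta_1^{-1} \maps p(s) \to q(s)$, and conversely $s$ together with $\alpha$ determines the rest up to unique compatible isomorphism. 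This motivates the comparison functor $\Phi \maps \tr(S) \to T$ sending $(s,x,x',y,\beta_i,\gamma_i)$ to $(s,\alpha)$ and acting as $f \mapsto f$ on morphisms.

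It then remains to check that $\Phi$ is an equivalence of groupoids. Essential surjectivity is immediate: given $(s,\alpha)$, take $x = x' = p(s)$, $y = q(s)$, $\beta_1 = \beta_2 = \id$, $\gamma_1 = \id$, and $\gamma_2 = \alpha$, whose composite is exactly $\alpha$. Full faithfulness is the step I expect to be the main obstacle, since it is where the bookkeeping of the weak pullbacks must be pinned down. Concretely, I must show that the two commuting squares defining a morphism of $\tr(S)$ force each auxiliary morphism to be uniquely determined by $f$ through the $\beta$'s and $\gamma$'s, and that under this determination the two squares collapse to the single condition $q(f)\,\alpha = \alpha'\,p(f)$ of the statement. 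This is a direct diagram chase through the two pullback squares, and once it is carried out $\Phi$ induces a bijection on every hom-set. Combining essential surjectivity with full faithfulness shows that $\Phi$ is an equivalence, which is precisely the claimed description of $\tr(S)$ up to equivalence of groupoids.
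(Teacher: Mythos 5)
The paper offers no proof of this proposition to compare against---it is explicitly left as an exercise for the reader---so your proposal stands or falls on its own, and it stands. Your unwinding of the iterated weak pullback is exactly right, including the directions of the isomorphisms forced by the paper's composition convention (in a composite $TS$ the isomorphism runs from $q_S(s)$ to $p_T(t)$, so the four auxiliary isomorphisms run \emph{out of} the right-hand diagonal copy of $X$ and \emph{into} the left-hand one, as you wrote); the comparison functor $(s,x,x',y,\beta_i,\gamma_i)\mapsto (s,\gamma_1^{-1}\gamma_2\beta_2\beta_1^{-1})$ is the right one, and your section exhibiting strict surjectivity on objects is correct. The one step you defer---full faithfulness---does close up as you predict: each of the two weak-pullback squares lives in $X\times X$ and hence splits into two equations in $X$, giving four equations in all; three of them solve uniquely for the three auxiliary morphisms in terms of $f$ (namely $u=\beta_1'^{-1}p(f)\beta_1$ on the right diagonal copy, $v=\beta_2'u\beta_2^{-1}$ on the carried-along factor, and $w=\gamma_1'q(f)\gamma_1^{-1}$ on the left diagonal copy), and substituting these into the fourth equation $\gamma_2'v=w\gamma_2$ yields precisely $q(f)\,\alpha=\alpha'\,p(f)$ with $\alpha=\gamma_1^{-1}\gamma_2\beta_2\beta_1^{-1}$. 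So morphisms $O\to O'$ in $\tr(S)$ biject with morphisms $\Phi(O)\to\Phi(O')$ in the target groupoid, and $\Phi$ is an equivalence. Your appeal to associativity of span composition up to equivalence (Proposition \ref{associativity}) to fix a bracketing is also legitimate, since equivalent spans have equivalent apexes and the proposition only concerns the apex up to equivalence.
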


\begin{proposition*}
Given a span of groupoids
\[
\xymatrix{
& S \ar[dl]_q \ar[dr]^p & \\
X & & X  
}
\]
where $\u{X}$ is finite, we have
\[           |\tr(S)| = \tr({\utilde{S}}) .\]
\end{proposition*}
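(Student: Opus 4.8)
The plan is to reduce both sides to the same sum over isomorphism classes of objects of $S$, using the explicit description of $\tr(S)$ from the preceding proposition on one side and the matrix-entry formula (\ref{matrix_entry}) on the other. First I would unwind the right-hand side. Since $\u{X}$ is finite, $\R^{\u{X}}$ has the basis $\{[x]\}$ and $\tr(\utilde{S})$ is the sum of the diagonal matrix entries. Setting $Y = X$ and $[y] = [x]$ in (\ref{matrix_entry}) gives
\[ \tr(\utilde{S}) = \sum_{[x] \in \u{X}} \; \sum_{[s] \in \u{p^{-1}(x)} \cap \u{q^{-1}(x)}} \frac{|\Aut(x)|}{|\Aut(s)|}. \]
The inner index set consists of the $[s] \in \u{S}$ with $p(s) \iso x$ and $q(s) \iso x$, so only $s$ with $p(s) \iso q(s)$ contribute, and each such $[s]$ is counted for the single class $[x] = [p(s)] = [q(s)]$. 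The double sum therefore collapses to
\[ \tr(\utilde{S}) = \sum_{\substack{[s] \in \u{S} \\ p(s) \iso q(s)}} \frac{|\Aut(p(s))|}{|\Aut(s)|}. \]

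Next I would compute $|\tr(S)|$ using the description of $\tr(S)$ as the groupoid $G$ whose objects are pairs $(s,\alpha)$ with $\alpha \maps p(s) \to q(s)$ and whose morphisms $f \maps (s,\alpha) \to (s',\alpha')$ are morphisms $f \maps s \to s'$ in $S$ satisfying $\alpha' \, p(f) = q(f) \, \alpha$. The key structural observation is that every morphism of $G$ projects to an isomorphism $s \to s'$ in $S$, so $G$ is the coproduct $\coprod_{[s_0] \in \u{S}} G_{[s_0]}$ of its full subgroupoids on objects whose underlying object is isomorphic to a fixed representative $s_0$; hence $|G| = \sum_{[s_0]} |G_{[s_0]}|$. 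I would then show each $G_{[s_0]}$ is equivalent to the action groupoid $\hom_X(p(s_0), q(s_0)) /\!/ \Aut(s_0)$, where $f \in \Aut(s_0)$ sends $\alpha$ to $q(f)\,\alpha\,p(f)^{-1}$: the inclusion of the full subgroupoid on objects of the form $(s_0, \alpha)$ is fully faithful by construction and essentially surjective, since any isomorphism $g \maps s \to s_0$ carries $(s,\alpha)$ to $(s_0,\, q(g)\,\alpha\,p(g)^{-1})$.

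By Lemma \ref{EQUIVGRPD} equivalent groupoids have equal cardinality, and the action-groupoid formula $|S /\!/ G| = |S|/|G|$ from the introduction then gives
\[ |G_{[s_0]}| = \frac{|\hom_X(p(s_0), q(s_0))|}{|\Aut(s_0)|}. \]
The set $\hom_X(p(s_0), q(s_0))$ is empty unless $p(s_0) \iso q(s_0)$, in which case post-composition makes it a torsor under $\Aut(q(s_0))$, so its cardinality equals $|\Aut(q(s_0))| = |\Aut(p(s_0))|$. Summing over $[s_0]$ reproduces exactly the collapsed form of $\tr(\utilde{S})$ obtained above, proving $|\tr(S)| = \tr(\utilde{S})$.

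I expect the main obstacle to be the second step: verifying rigorously that $G_{[s_0]}$ is equivalent to the stated action groupoid, and in particular that restricting attention to a single representative $s_0$ loses no information. This is where one must check essential surjectivity and full faithfulness of the inclusion with care, and where the torsor count converting $|\hom_X(p(s_0), q(s_0))|$ into $|\Aut(p(s_0))|$ does the real work of matching the factor $|\Aut(x)|$ appearing in (\ref{matrix_entry}). A minor point to settle alongside is finiteness: one should note that tameness of $S$ guarantees each inner sum converges, so both sides are genuine finite real numbers and the rearrangement of the series is legitimate.
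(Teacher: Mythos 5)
Your proof is correct, and there is in fact nothing in the paper to compare it against: the authors explicitly leave this proposition (together with the other trace propositions) as an exercise, so your argument has to stand on its own---and it does. Both reductions check out. On the operator side, collapsing the double sum from Equation \ref{matrix_entry} to a single sum of $|\Aut(p(s))|/|\Aut(s)|$ over those $[s]\in\u{S}$ with $p(s)\iso q(s)$ is legitimate: each such class contributes to exactly one diagonal entry, namely $[x]=[p(s)]$, all terms are nonnegative, and $\u{X}$ is finite, so the rearrangement causes no trouble beyond the tameness you invoke. On the groupoid side, the decomposition of the trace groupoid into full subgroupoids $G_{[s_0]}$ (no morphism can connect pairs whose underlying objects are non-isomorphic, since every morphism of a groupoid is invertible), the equivalence $G_{[s_0]}\simeq\hom_X(p(s_0),q(s_0))/\!/\Aut(s_0)$ via the fully faithful and essentially surjective inclusion, the cardinality formula for weak quotients by finite group actions, and the torsor count $|\hom_X(p(s_0),q(s_0))|=|\Aut(q(s_0))|=|\Aut(p(s_0))|$ are all correct; the standing assumption that groupoids are locally finite is what makes these sets and groups finite. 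Two points are worth making explicit in a write-up: first, the proposition implicitly assumes $S$ is tame, since otherwise $\utilde{S}$ is undefined, and your closing remark handles this; second, your argument leans on the immediately preceding proposition describing $\tr(S)$ as the groupoid of pairs $(s,\alpha)$ with $\alpha\maps p(s)\to q(s)$, which the paper also leaves as an exercise---so a fully self-contained solution would additionally derive that description from the composite of the three spans defining $\tr(S)$.
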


\begin{proposition*}
Given spans of groupoids
\[
\xymatrix{
& S \ar[dl]_{q_S} \ar[dr]^{p_S} & & & T \ar[dl]_{q_T} \ar[dr]^{p_T} & \\
X & & X & X & & X
}
\]
and a groupoid $\Lambda$, we have the following equivalences of 
groupoids:
\begin{enumerate}
\item $\tr(S + T) \simeq \tr(S) + \tr(T)$  
\item $\tr(\Lambda \times S) \simeq \Lambda \times \tr(S)$ 
\end{enumerate}
\end{proposition*}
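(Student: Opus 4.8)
The plan is to work entirely with the explicit model of the trace supplied by the first of the three trace propositions above. Write $\widehat{S}$ for the groupoid it describes: objects are pairs $(s,\alpha)$ with $s \in S$ and $\alpha \maps p(s) \to q(s)$ a morphism of $X$, and a morphism $(s,\alpha) \to (s',\alpha')$ is a morphism $f \maps s \to s'$ of $S$ making the evident square commute. That proposition gives an equivalence $\tr(S) \simeq \widehat{S}$ for every span $S \maps X \to X$. Since both the coproduct $+$ and the cartesian product $\Lambda \times (-)$ of groupoids are $2$-functorial—they send equivalences to equivalences—it suffices to produce \emph{isomorphisms} of explicit models $\widehat{S+T} \iso \widehat{S} + \widehat{T}$ and $\widehat{\Lambda \times S} \iso \Lambda \times \widehat{S}$, and then paste:
\[ \tr(S+T) \simeq \widehat{S+T} \iso \widehat{S} + \widehat{T} \simeq \tr(S) + \tr(T), \]
and likewise for the scalar case.

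For part (1), recall that $S+T$ is the disjoint union of groupoids, with legs obtained from the universal property of the coproduct: on the copy of $S$ they restrict to $q_S, p_S$, and on the copy of $T$ to $q_T, p_T$. An object $(u,\alpha)$ of $\widehat{S+T}$ therefore has $u$ lying in exactly one of the two components; if $u = s \in S$ then $\alpha \maps p_S(s) \to q_S(s)$ is exactly the data of an object of $\widehat{S}$, and symmetrically for $T$. The key structural fact is that a coproduct of groupoids has \emph{no} morphisms between distinct components, so every morphism of $\widehat{S+T}$ lives within a single component, and there the commuting-square condition is verbatim the one defining $\widehat{S}$ (resp. $\widehat{T}$). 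Hence on objects and on morphisms $\widehat{S+T}$ is literally the disjoint union of $\widehat{S}$ and $\widehat{T}$, giving the desired isomorphism.

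For part (2), the legs of $\Lambda \times S$ are $p_S \pi_2$ and $q_S \pi_2$, so they ignore the $\Lambda$-coordinate. An object $((\lambda,s),\alpha)$ of $\widehat{\Lambda \times S}$ thus carries $\alpha \maps p_S(s) \to q_S(s)$, i.e.\ it is precisely a pair $(\lambda,(s,\alpha))$ with $\lambda \in \Lambda$ and $(s,\alpha) \in \widehat{S}$. A morphism is a pair $(g,f)$ with $g \maps \lambda \to \lambda'$ in $\Lambda$ and $f \maps s \to s'$ in $S$; since the legs factor through $\pi_2$, the commuting square involves only $p_S(f)$ and $q_S(f)$, so the condition is exactly that $f$ be a morphism of $\widehat{S}$ while $g$ is unconstrained. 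This is exactly the data of a morphism in $\Lambda \times \widehat{S}$, so the two groupoids are isomorphic.

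There is no serious obstacle here: once the explicit model is in hand, both identifications are bookkeeping amounting to the observation that the trace construction sees only the legs $p,q$, and that coproducts and products interact with those legs in the transparent way above. The one point requiring a word of care—and the closest thing to a subtlety—is that the trace is a priori defined only up to equivalence (it is a composite of spans built from weak pullbacks), so the argument must be phrased through the equivalence $\tr(S) \simeq \widehat{S}$ and must invoke that $+$ and $\Lambda \times (-)$ preserve equivalences; the underlying comparisons $\widehat{S+T} \iso \widehat{S}+\widehat{T}$ and $\widehat{\Lambda \times S} \iso \Lambda \times \widehat{S}$ are then genuine isomorphisms of groupoids.
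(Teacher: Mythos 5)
Your proof is correct. There is in fact nothing in the paper to compare it against: this proposition is one of the trace properties the authors explicitly ``leave as an exercise,'' so no proof of it appears in the text. Your route is the natural one and it works. You invoke the explicit model of the trace from the proposition stated immediately beforehand---objects $(s,\alpha)$ with $\alpha \maps p(s) \to q(s)$, morphisms $f \maps s \to s'$ making the square commute---and then verify that this model takes the coproduct span $S+T$ and the scalar-multiplied span $\Lambda \times S$ to $\widehat{S}+\widehat{T}$ and $\Lambda \times \widehat{S}$ by genuine isomorphisms. Both verifications are sound: the coproduct of groupoids has no morphisms between distinct components, so $\widehat{S+T}$ splits as a disjoint union with the commuting-square condition restricting verbatim to each summand; and the legs of $\Lambda \times S$ factor through $\pi_2$, so in $\widehat{\Lambda\times S}$ the $\Lambda$-coordinate of a morphism is unconstrained, which is exactly $\Lambda \times \widehat{S}$. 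The pasting step is also handled correctly, since you note (and it is true) that $+$ and $\Lambda \times (-)$ carry equivalences to equivalences, so $\tr(S+T) \simeq \widehat{S+T} \iso \widehat{S}+\widehat{T} \simeq \tr(S)+\tr(T)$, and similarly in the scalar case. The one dependency worth flagging is that the equivalence $\tr(S) \simeq \widehat{S}$ is itself only asserted in the paper (it is likewise left as an exercise); citing a previously stated proposition is legitimate, but a fully self-contained solution of the authors' exercise would also unwind the composite of the three spans defining $\tr(S)$ and establish that explicit model.
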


\begin{proposition*}
Given spans of groupoids 
\[
\xymatrix{
& T \ar[dl]_{q_T} \ar[dr]^{p_T} & & & S \ar[dl]_{q_S} \ar[dr]^{p_S} & \\
X & & Y & Y & & X
}
\]
we have an equivalence of groupoids
\[           \tr(ST) \simeq \tr(TS) .\]
\end{proposition*}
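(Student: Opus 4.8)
The plan is to make both traces completely explicit with the help of the first trace proposition above, and then to write down an isomorphism of the resulting groupoids that merely interchanges the data attached to $S$ and to $T$. Recall that for an endospan on $Z$ with left leg $q$ and right leg $p$, the trace has objects $(r,\alpha)$ with $\alpha \maps p(r) \to q(r)$ an isomorphism in $Z$, and has as morphisms $(r,\alpha)\to(r',\alpha')$ those $f\maps r\to r'$ with $q(f)\circ\alpha=\alpha'\circ p(f)$. Thus the whole computation reduces to identifying the apex and the two legs of each of the composite endospans $TS$ (on $X$) and $ST$ (on $Y$).

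First I would unwind $TS$. It is the weak pullback of the cospan $S\xrightarrow{q_S}Y\xleftarrow{p_T}T$, so its objects are triples $(s,t,\beta)$ with $\beta\maps q_S(s)\to p_T(t)$ in $Y$, and a morphism to $(s',t',\beta')$ is a pair $(h\maps s\to s',\,g\maps t\to t')$ with $\beta'\circ q_S(h)=p_T(g)\circ\beta$. Its source (right) leg is $p_S\pi_S\maps(s,t,\beta)\mapsto p_S(s)$ and its target (left) leg is $q_T\pi_T\maps(s,t,\beta)\mapsto q_T(t)$. Feeding this into the trace description, $\tr(TS)$ has objects $(s,t,\beta,\alpha)$ where in addition $\alpha\maps p_S(s)\to q_T(t)$ is an isomorphism in $X$, and $(h,g)$ is a morphism precisely when both $\beta'\circ q_S(h)=p_T(g)\circ\beta$ in $Y$ and $q_T(g)\circ\alpha=\alpha'\circ p_S(h)$ in $X$.

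Next I would do the same for $ST$, the weak pullback of $T\xrightarrow{q_T}X\xleftarrow{p_S}S$. Its objects are triples $(s,t,\gamma)$ with $\gamma\maps q_T(t)\to p_S(s)$ in $X$; its source (right) leg is $p_T\pi_T\maps(s,t,\gamma)\mapsto p_T(t)$ and its target (left) leg is $q_S\pi_S\maps(s,t,\gamma)\mapsto q_S(s)$. Hence $\tr(ST)$ has objects $(s,t,\gamma,\delta)$ with an extra isomorphism $\delta\maps p_T(t)\to q_S(s)$ in $Y$, and $(h,g)$ is a morphism exactly when $\gamma'\circ q_T(g)=p_S(h)\circ\gamma$ in $X$ and $q_S(h)\circ\delta=\delta'\circ p_T(g)$ in $Y$.

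Comparing the two, each groupoid consists of an object $s\in S$, an object $t\in T$, an isomorphism relating $p_S(s)$ and $q_T(t)$ in $X$, and an isomorphism relating $p_T(t)$ and $q_S(s)$ in $Y$, with morphisms the pairs $(h,g)$ making the two evident squares commute. I would therefore define the functor $\tr(TS)\to\tr(ST)$ that is the identity on the underlying objects $s,t$ and on morphisms $(h,g)$, and sends the pair of structural isomorphisms $(\beta,\alpha)$ to the inverses $(\gamma,\delta)=(\alpha^{-1},\beta^{-1})$. A short check shows the two commuting-square conditions of $\tr(TS)$ transform into those of $\tr(ST)$ under this inversion, so the functor is a well-defined strict isomorphism; as $\tr$ is only defined up to equivalence anyway, this yields $\tr(TS)\simeq\tr(ST)$. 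The one point that requires care---and the only real obstacle---is the bookkeeping of the four leg directions and, in particular, noticing that the structural isomorphisms in $X$ and in $Y$ point in \emph{opposite} directions on the two sides, so that the identification must \emph{invert} them rather than copy them. Conceptually this is just the graphical fact that one may slide $S$ and $T$ around the trace loop, but since the coherence of $\Span$ as a symmetric monoidal bicategory is only conjectural here, the explicit identification is the cleaner route.
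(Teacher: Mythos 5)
Your proof is correct. The paper offers no proof to compare against---this proposition is one of the trace properties the paper explicitly leaves ``as an exercise''---and your argument is precisely the intended one: unwind $TS$ and $ST$ via the weak-pullback definition, apply the explicit description of the trace from the first trace proposition, and exhibit the evident swap functor between the two resulting groupoids. You also correctly isolated the only subtle point, namely that with the paper's orientation conventions the structural isomorphisms $\alpha \maps p_S(s) \to q_T(t)$ and $\beta \maps q_S(s) \to p_T(t)$ must be \emph{inverted} rather than merely relabelled, and your check that inversion carries the two commuting-square conditions of $\tr(TS)$ exactly onto those of $\tr(ST)$ goes through.
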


We could go even further generalizing ideas from vector spaces and
linear operators to groupoids and spans, but at this point the reader
is probably hungry for some concrete applications.  For these, proceed
directly to Section \ref{applications}.  Section \ref{homology} can be
skipped on first reading, since we need it only for the application to
Hall algebras in Section \ref{hall}.

\section{Homology versus Cohomology} \label{homology}

The work we have described so far has its roots in the cohomology of
groupoids.  Any groupoid $X$ can be turned into a topological space,
namely the geometric realization of its nerve \cite{GJ}, and we can
define the cohomology of $X$ to be the cohomology of this space.  The
set of connected components of this space is just the set of
isomorphism classes of $X$, which we have denoted $\u{X}$.
So, the {\bf zeroth cohomology} of the groupoid $X$, with real
coefficients, is precisely the vector space $\R^{\u{X}}$ that
we have been calling the degroupoidification of $X$.

Indeed, one reason degroupoidification has been overlooked until
recently is that every groupoid is equivalent to a disjoint union of
one-object groupoids, which we may think of as groups.  To turn a
groupoid into a topological space it suffices to do this for each of
these groups and then take the disjoint union.  The space associated
to a group $G$ is quite famous: it is the Eilenberg--Mac Lane space
$K(G,1)$.  Similarly, the cohomology of groups is a famous and
well-studied subject.  But the zeroth cohomology of a group is always
just $\R$.  So, zeroth cohomology is not considered interesting.
Zeroth cohomology only becomes interesting when we move from groups to
groupoids---and then only when we consider how a tame span of
groupoids induces a map on zeroth cohomology.

These reflections suggest an alternate approach to degroupoidification
based on homology instead of cohomology:

\begin{definition} Given a groupoid $X$, let the {\bf zeroth homology} of 
$X$ be the real vector space with the set $\u{X}$ as basis.  
We denote this vector space as $\R[\u{X}]$.
\end{definition}

We can also think of $\R[\u{X}]$ as the space of real-valued
functions on $\u{X}$ with finite support.  This makes it clear
that the zeroth homology $\R[\u{X}]$ can be identified with a
subspace of the zeroth cohomology $\R^{\u{X}}$.  If $X$ has
finitely many isomorphism classes of objects, then the set
$\u{X}$ is finite, and the zeroth homology and zeroth
cohomology are canonically isomorphic.  For a groupoid with infinitely
many isomorphism classes, however, the difference becomes important.
The following example makes this clear:
\begin{example}
\label{polynomials}
\textup{
Let $E$ be the groupoid of finite sets and bijections. 
In Example \ref{power_series} we saw that
\[ \R^{\u{E}} \; \iso \; \lbrace \psi \maps \N \to \R \rbrace 
\; \cong \; \R[[z]] . \]
So, elements of $\R[\u{E}]$ may be identified with 
formal power series with only finitely many nonzero coefficients.
But these are just polynomials:
\[   \R[\u{E}] \; \cong \; \R[z]  .\]
}
\end{example}

Before pursuing a version of degroupoidification based on homology, we
should ask if there are other choices built into our recipe for
degroupoidification that we can change. The answer is yes.  Recalling
Equation \ref{operator_formula}, which describes the operator associated
to a tame span:
\[   
(\utilde{S} \psi)([y]) = 
\sum_{[x] \in \u{X}} \;\,
\sum_{[s]\in\u{p^{-1}(x)}\bigcap \u{q^{-1}(y)}}
\frac{|\Aut(x)|}{|\Aut(s)|} \,\, \psi([x]) \,. 
\]
one might wonder about the asymmetry of this formula.  Specifically,
one might wonder why this formula uses information about $\Aut(x)$ but
not $\Aut(y)$. The answer is that we made an arbitrary choice of
conventions. There is another equally nice choice, and in fact 
an entire family of choices interpolating between these two:

\begin{proposition}
\label{cohomological_alpha_degroupoidification}
Given $\alpha \in \R$ and a tame span of groupoids:
\[\xymatrix{
 & S\ar[dl]_q\ar[dr]^p & \\
 Y & & X \\
}\]
there is a linear operator 
called its {\bf \boldmath $\alpha$-degroupoidification}:
\[  \utilde{S_\alpha}\maps \R^{\u{X}}\to\R^{\u{Y}} \]
given by:
\[   
(\utilde{S_\alpha} \psi)([y]) = 
\sum_{[x] \in \u{X}} \;\,
\sum_{[s]\in\u{p^{-1}(x)}\bigcap \u{q^{-1}(y)}}
\frac{|\Aut(x)|^{1-\alpha}|\Aut(y)|^\alpha}{|\Aut(s)|} \,\, \psi([x]) \,. 
\]
\end{proposition}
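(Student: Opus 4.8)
The plan is to reduce the statement to the already-established $\alpha = 0$ case. Recall that Theorem \ref{PROCESS1} provides a well-defined linear operator $\utilde{S} \maps \R^{\u{X}} \to \R^{\u{Y}}$ for any tame span, and that Equation \ref{operator_formula} gives its explicit form, which is exactly the $\alpha = 0$ instance of the displayed formula. I would construct $\utilde{S_\alpha}$ by conjugating $\utilde{S}$ with diagonal operators that rescale each basis vector by a power of its automorphism group.

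First I would introduce, for any of our groupoids $Z$, the diagonal operator $D_Z \maps \R^{\u{Z}} \to \R^{\u{Z}}$ defined by $(D_Z \phi)([z]) = |\Aut(z)|^\alpha \, \phi([z])$. This is well defined because isomorphic objects have isomorphic automorphism groups, so $[z] \mapsto |\Aut(z)|^\alpha$ is a genuine function on $\u{Z}$; and because local finiteness forces $1 \le |\Aut(z)| < \infty$, each $|\Aut(z)|^\alpha$ is a positive real for every real $\alpha$. Being pointwise multiplication by a nowhere-vanishing function, $D_Z$ is a linear bijection whose inverse is $(D_Z^{-1}\phi)([z]) = |\Aut(z)|^{-\alpha}\phi([z])$, and there is no convergence issue since $D_Z$ acts on each coordinate separately.

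The key step is then a direct algebraic check that
\[ \utilde{S_\alpha} = D_Y \, \utilde{S} \, D_X^{-1}. \]
Applying the right-hand side to $\psi \in \R^{\u{X}}$, the operator $D_X^{-1}$ first replaces $\psi([x])$ by $|\Aut(x)|^{-\alpha}\psi([x])$; feeding this into Equation \ref{operator_formula} turns the coefficient $|\Aut(x)|/|\Aut(s)|$ into $|\Aut(x)|^{1-\alpha}/|\Aut(s)|$; and the outer $D_Y$ multiplies the $[y]$-component by $|\Aut(y)|^\alpha$. This reproduces the coefficient $|\Aut(x)|^{1-\alpha}|\Aut(y)|^\alpha/|\Aut(s)|$ of the proposition exactly. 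Since $\utilde{S}$ is a well-defined linear operator and $D_X^{-1}$, $D_Y$ are linear, the composite $D_Y \, \utilde{S} \, D_X^{-1}$ is a well-defined linear operator, and hence so is $\utilde{S_\alpha}$.

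I do not expect a serious obstacle. The entire content is recognizing the factorization through diagonal operators, after which well-definedness---in particular convergence of the double sum---is inherited for free from the $\alpha = 0$ case rather than needing to be re-proved. The only points needing a word of care are that the rescaling function is constant on isomorphism classes and that automorphism groups are finite and nonempty, so that the real powers $|\Aut(\cdot)|^{\pm\alpha}$ make sense; both are immediate from the standing assumption that all our groupoids are locally finite.
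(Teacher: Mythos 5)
Your proof is correct, and it takes a genuinely different (though closely related) route from the paper's. The paper's proof is a one-line direct convergence check: for fixed $y$, condition 1 of Theorem \ref{matrix1} guarantees that only finitely many isomorphism classes $[x]$ contribute, condition 2 guarantees that each inner sum $\sum_{[s]} 1/|\Aut(s)|$ is finite, and the extra factors $|\Aut(x)|^{1-\alpha}|\Aut(y)|^{\alpha}$ are just finite positive constants for fixed $[x]$ and $[y]$, so the displayed sums converge and linearity is automatic. You instead factor the operator as $D_Y\,\utilde{S}\,D_X^{-1}$, reducing everything to the $\alpha = 0$ case; this is legitimate because Theorem \ref{matrix1} asserts Equation \ref{operator_formula} for \emph{every} $\psi \in \R^{\u{X}}$, in particular for $D_X^{-1}\psi$, so convergence really is inherited rather than re-proved, and your coefficient bookkeeping is exact. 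Both arguments ultimately rest on the same ingredient, the tameness criterion of Theorem \ref{matrix1}, but your factorization buys something extra: the diagonal operators you introduce are precisely the operators $T_X$ that the paper deploys later to prove that the $\alpha$- and $\beta$-degroupoidification functors are naturally isomorphic, so your proof simultaneously establishes the proposition and makes the ``change of basis'' relation between different values of $\alpha$ explicit at the outset. The paper's route is shorter; yours explains structurally why the choice of $\alpha$ is harmless.
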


\begin{proof} 
The only thing that needs to be checked is that the sums converge
for any fixed choice of $y \in Y$.  This follows from our explicit
criterion for tameness of spans, Theorem \ref{matrix1}. 
\end{proof}

The most interesting choices of $\alpha$ are $\alpha = 0$, $\alpha =
1$, and the symmetrical choice $\alpha = 1/2$.  The last convention
has the advantage that for a tame span $S$ with tame adjoint
$S^\dagger$, the matrix for the degroupoidification of $S^\dagger$ is
just the transpose of that for $S$.  We can show:

\begin{proposition}\label{alphaspancompose}
For any $\alpha \in \R$ there is a functor from the category of
groupoids and equivalence classes of tame spans to the category of
real vector spaces and linear
operators, sending:
\begin{itemize}
\item
each groupoid $X$ to its zeroth cohomology $\R^{\u{X}}\,$, and
\item 
each tame span $S$ from $X$ to $Y$ to the operator
$\utilde{S_\alpha} \maps \R^{\u{X}} \to \R^{\u{Y}}\,$.
\end{itemize}
In particular, if we have composable tame spans:
\[
\xymatrix{
& T\ar[dl]_{} \ar[dr]^{} & & S \ar[dl]_{} \ar[dr]^{}& \\
Z & & Y & & X
}
\]  
then their composite
\[\xymatrix{
 & TS \ar[dl]_{} \ar[dr]^{} & \\
 Z & & X \\
}\]
is again tame, and
\[       \utilde{(TS)_\alpha} = \utilde{T_\alpha} \utilde{S_\alpha}  .\]
\end{proposition}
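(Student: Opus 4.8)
The plan is to express $\alpha$-degroupoidification as a conjugate of ordinary ($\alpha = 0$) degroupoidification, thereby reducing the entire statement to the functoriality already established in Theorem \ref{functor} and Lemma \ref{composition}. For each groupoid $X$ and each $\alpha \in \R$, define a diagonal operator $D_X^\alpha \maps \R^{\u{X}} \to \R^{\u{X}}$ by $(D_X^\alpha \psi)([x]) = |\Aut(x)|^\alpha \, \psi([x])$. Because every automorphism group is finite by local finiteness, each factor $|\Aut(x)|^\alpha$ is a well-defined positive real, so $D_X^\alpha$ is a well-defined linear operator acting coordinatewise on the full function space $\R^{\u{X}}$, and it is invertible with inverse $D_X^{-\alpha}$.

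First I would establish the key identity $\utilde{S_\alpha} = D_Y^\alpha \, \utilde{S} \, D_X^{-\alpha}$ for any tame span $S$ from $X$ to $Y$. This is a direct comparison: applying the right-hand side to an arbitrary $\psi \in \R^{\u{X}}$ and using the explicit formula (Equation \ref{operator_formula}) for $\utilde{S}$ gives
\[
(D_Y^\alpha \, \utilde{S} \, D_X^{-\alpha}\psi)([y])
= |\Aut(y)|^\alpha \sum_{[x] \in \u{X}} \; \sum_{[s]} \frac{|\Aut(x)|}{|\Aut(s)|}\,|\Aut(x)|^{-\alpha}\,\psi([x]),
\]
which is exactly the defining formula for $\utilde{S_\alpha}$ in Proposition \ref{cohomological_alpha_degroupoidification}. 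Since the sum over $[x]$ and $[s]$ already converges for $\utilde{S}$ by tameness (Theorem \ref{PROCESS1}), inserting the diagonal factors $|\Aut(y)|^\alpha$ and $|\Aut(x)|^{-\alpha}$ poses no new convergence problem.

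Given this identity, functoriality is immediate. Tameness of a span does not depend on $\alpha$, so the composite $TS$ of composable tame spans is again tame by Lemma \ref{composition}. Moreover, for composable spans $S \maps X \to Y$ and $T \maps Y \to Z$,
\[
\utilde{T_\alpha}\,\utilde{S_\alpha}
= \bigl(D_Z^\alpha \, \utilde{T} \, D_Y^{-\alpha}\bigr)\bigl(D_Y^\alpha \, \utilde{S} \, D_X^{-\alpha}\bigr)
= D_Z^\alpha \, \utilde{T}\,\utilde{S} \, D_X^{-\alpha}
= D_Z^\alpha \, \utilde{TS} \, D_X^{-\alpha}
= \utilde{(TS)_\alpha},
\]
where the third equality uses $\utilde{TS} = \utilde{T}\,\utilde{S}$ from the $\alpha = 0$ case. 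The identity span on $X$ has $\utilde{\id} = \id$, hence $\utilde{\id_\alpha} = D_X^\alpha \, \id \, D_X^{-\alpha} = \id$, so identities are preserved. Finally, $\utilde{S_\alpha}$ depends only on the equivalence class of $S$, since $\utilde{S}$ does and the diagonal operators depend only on $X$ and $Y$; thus the assignment descends to equivalence classes of tame spans and defines a functor.

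The only real subtlety, and hence the step deserving the most care, is the well-definedness of the construction in the infinite-dimensional setting: one must know that $\utilde{S_\alpha}\psi$ is a genuine element of $\R^{\u{Y}}$ for every $\psi$, i.e., that each coordinate sum converges. This is precisely what Proposition \ref{cohomological_alpha_degroupoidification} guarantees via the tameness criterion of Theorem \ref{matrix1}, so under our hypotheses the conjugation manipulation above is fully justified and no further convergence analysis is needed.
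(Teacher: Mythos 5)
Your proof is correct, but it takes a genuinely different route from the paper's. The paper gives no direct argument at all: it omits the proof with the remark that it ``mimics that of Theorem \ref{functor}'', i.e.\ the intended proof re-runs the entire functoriality argument (identities, associativity of weak pullback, preservation of composition) with the $\alpha$-twisted formula carried through each step. You instead reduce the whole statement to the already-proved $\alpha = 0$ case via the conjugation identity $\utilde{S_\alpha} = D_Y^\alpha\, \utilde{S}\, D_X^{-\alpha}$, which is a clean and valid computation: the factor $|\Aut(x)|^{-\alpha}$ is constant on each inner sum over $[s]$ and $|\Aut(y)|^{\alpha}$ is constant overall, and the convergence needed is exactly what Theorem \ref{matrix1} supplies (finitely many contributing classes $[x]$ for fixed $y$, convergent inner sums), so no new analysis is required. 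Your appeals to Lemma \ref{composition} for tameness of $TS$ (tameness being $\alpha$-independent) and to Theorem \ref{linops_equivspans1} for descent to equivalence classes are both legitimate. What your approach buys is economy and a bonus: the diagonal operators $D_X^\alpha$ are precisely the components $T_X$ that the paper introduces in its \emph{next} proposition to show the functors for different $\alpha$ are naturally isomorphic, so your argument simultaneously establishes Proposition \ref{alphaspancompose} and that natural isomorphism in one stroke, whereas the paper treats them as two separate results (and proves the second one only at the level of vectors $\utilde{\Psi_\alpha}$, using Proposition \ref{alpha_degroupoidification_vector}, rather than directly from the operator formula as you do). The only cost is that your proof is less self-contained as a template: someone wanting the homological variant (Proposition \ref{alphaspancomposefinite}) would still need to check that the diagonal operators restrict appropriately to $\R[\u{X}]$, which is immediate but worth a sentence, whereas the paper's ``mimic the proof'' prescription adapts verbatim.
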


We omit the proof because it mimics that of Theorem \ref{functor}.
Note that a groupoid $\Psi$ over $X$ can be seen as a special case of
a span, namely a span
\[\xymatrix{
 & \Psi \ar[dl]_{v} \ar[dr]^{} & \\
 X & & 1 \\
}\]
where $1$ is the terminal groupoid---that is, the groupoid
with one object and one morphism.  So, $\alpha$-degroupoidification
also gives a recipe for turning $\Psi$ into a vector in $\R^{\u{X}}\,$:
\begin{equation}
\label{alpha_degroupoidification_formula}
\utilde{\Psi_\alpha}{[x]} = |\Aut(x)|^\alpha {|v^{-1}(x)|} \,.
\end{equation}
This idea yields the following result as a special case of Proposition
\ref{alphaspancompose}:

\begin{proposition} 
\label{alpha_degroupoidification_vector}
Given a tame span:
\[\xymatrix{
& S \ar[dl]_{q} \ar[dr]^{p}& \\
Y & & X
}\]
and a tame groupoid over $X$, say $v\maps\Psi\to X$, then
$\utilde{(S\Psi)_\alpha}=\utilde{S_\alpha}\utilde{\Psi_\alpha}$
\end{proposition}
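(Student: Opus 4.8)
The final statement (Proposition \ref{alpha_degroupoidification_vector}) is a direct specialization of the functoriality established in Proposition \ref{alphaspancompose}, so my plan is to realize the groupoid $\Psi$ over $X$ as a span and then quote composition of spans. The key observation, already pointed out in the excerpt, is that a tame groupoid $v \maps \Psi \to X$ is the same data as a span from the terminal groupoid $1$ to $X$:
\[
\xymatrix{
 & \Psi \ar[dl]_{v} \ar[dr]^{} & \\
 X & & 1 \\
}
\]
whose $\alpha$-degroupoidification, by Equation \ref{alpha_degroupoidification_formula}, is precisely the vector $\utilde{\Psi_\alpha} \in \R^{\u{X}}$. So the plan is: first, verify that this span is tame in the sense required by Proposition \ref{alphaspancompose}, which amounts to checking that it is tame as a groupoid over $X$---but that is exactly the hypothesis that $v \maps \Psi \to X$ is a tame groupoid over $X$.

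Next I would observe that composing the span $S$ from $X$ to $Y$ with the span $\Psi$ from $1$ to $X$ gives, by the definition of span composition via weak pullback, a span from $1$ to $Y$ whose apex is the weak pullback $S\Psi$. One should check that this composite span, viewed as a groupoid over $Y$ via $q\pi_S$, is exactly the groupoid $S\Psi$ over $Y$ described earlier in the section; this is immediate from the definition of the composite $TS$ applied to the special case where the left span has apex $S$ and the right span has apex $\Psi$ with legs $v$ and the unique functor to $1$. Since $S$ is tame as a span and $\Psi$ is tame as a groupoid over $X$, the definition of tameness for spans guarantees that $q\pi_S \maps S\Psi \to Y$ is tame, so the composite span is tame and its $\alpha$-degroupoidification is again a vector $\utilde{(S\Psi)_\alpha} \in \R^{\u{Y}}$.

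Finally, I would apply the composition law from Proposition \ref{alphaspancompose} directly: since $\alpha$-degroupoidification is a functor, it sends the composite span $S \circ \Psi$ (from $1$ to $Y$) to the composite of operators $\utilde{S_\alpha} \utilde{\Psi_\alpha}$, where $\utilde{\Psi_\alpha} \maps \R^{\u{1}} \to \R^{\u{X}}$ picks out the vector $\utilde{\Psi_\alpha}$ under the canonical identification $\R^{\u{1}} \cong \R$. Tracing through this identification yields exactly $\utilde{(S\Psi)_\alpha} = \utilde{S_\alpha}\utilde{\Psi_\alpha}$, which is the claim. I do not expect any serious obstacle here, since all the analytic content (convergence of the sums, the weak-pullback bookkeeping) has already been packaged into Proposition \ref{alphaspancompose} and the tameness definitions; the only mildly delicate point is the bookkeeping that identifies the composite span's apex with $S\Psi$ and confirms that the $1$-to-$1$ identification $\R^{\u{1}} \cong \R$ carries the functorial composite to the stated formula. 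That identification is routine once the span-composition picture is drawn out.
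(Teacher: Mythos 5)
Your proposal is correct and is essentially the paper's own argument: the paper states this proposition with no separate proof, introducing it precisely as ``a special case of Proposition \ref{alphaspancompose}'' obtained by regarding $v \maps \Psi \to X$ as a span from the terminal groupoid $1$ to $X$ and invoking functoriality of $\alpha$-degroupoidification under composition of tame spans. Your additional bookkeeping---checking that span-tameness of $\Psi$ over $1$ reduces to tameness of $\Psi$ over $X$, that the composite span's apex is the weak pullback $S\Psi$, and that $\R^{\u{1}} \cong \R$ carries the operator $\utilde{\Psi_\alpha}$ to the vector of Equation \ref{alpha_degroupoidification_formula}---is exactly the routine verification the paper leaves implicit.
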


Equation \ref{alpha_degroupoidification_formula} also implies that we 
can compensate for a different choice of $\alpha$ by doing a change of
basis.  So, our choice of $\alpha$ is merely a matter of convenience.
More precisely:

\begin{proposition} 
Regardless of the value of $\alpha \in \R$, the
functors in Proposition \ref{alphaspancompose} are
all naturally isomorphic.
\end{proposition}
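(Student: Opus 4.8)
The plan is to exhibit an explicit natural isomorphism between any two of these functors and verify naturality by a one-line computation with exponents of automorphism-group orders. Write $F_\alpha$ for the functor of Proposition \ref{alphaspancompose}, so $F_\alpha(X) = \R^{\u{X}}$ and $F_\alpha(S) = \utilde{S_\alpha}$. Fix $\alpha,\beta \in \R$. Since $F_\alpha$ and $F_\beta$ agree on objects, a natural isomorphism $F_\alpha \To F_\beta$ amounts to a family of invertible linear maps $\eta_X \maps \R^{\u{X}} \to \R^{\u{X}}$ making the evident squares commute. Equation \ref{alpha_degroupoidification_formula} already suggests what $\eta_X$ must be: since $\utilde{\Psi_\beta}([x]) = |\Aut(x)|^{\beta-\alpha}\,\utilde{\Psi_\alpha}([x])$ for every tame groupoid $\Psi$ over $X$, I would define $\eta_X$ to be the diagonal operator
\[ (\eta_X \psi)([x]) = |\Aut(x)|^{\beta-\alpha}\,\psi([x]). \]

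This $\eta_X$ is well defined, linear, and invertible: each $\Aut(x)$ is a finite nonempty group, so $|\Aut(x)| \in \{1,2,3,\dots\}$ and $|\Aut(x)|^{\beta-\alpha}$ is a positive real number, while the inverse of $\eta_X$ is the same formula with $\alpha$ and $\beta$ interchanged. Because $\eta_X$ merely rescales each component indexed by $[x] \in \u{X}$, it is defined whether or not $\u{X}$ is finite, and it introduces no new sums.

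The heart of the argument is naturality: for every tame span $S$ from $X$ to $Y$ I must check $\eta_Y\,\utilde{S_\alpha} = \utilde{S_\beta}\,\eta_X$ as maps $\R^{\u{X}} \to \R^{\u{Y}}$. I would apply each side to an arbitrary $\psi$ and evaluate at $[y] \in \u{Y}$, using the explicit formula of Proposition \ref{cohomological_alpha_degroupoidification}. Both sides become the same double sum over $[x] \in \u{X}$ and $[s] \in \u{p^{-1}(x)} \cap \u{q^{-1}(y)}$, so it suffices to match the coefficient of $\psi([x])$. On the left that coefficient is $|\Aut(y)|^{\beta-\alpha}\cdot |\Aut(x)|^{1-\alpha}|\Aut(y)|^{\alpha}/|\Aut(s)|$, and on the right it is $|\Aut(x)|^{1-\beta}|\Aut(y)|^{\beta}/|\Aut(s)| \cdot |\Aut(x)|^{\beta-\alpha}$; both collapse to $|\Aut(x)|^{1-\alpha}|\Aut(y)|^{\beta}/|\Aut(s)|$ because $(1-\beta)+(\beta-\alpha)=1-\alpha$ and $\alpha+(\beta-\alpha)=\beta$. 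That is the entire computation.

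Two routine points finish the proof. First, morphisms in $\Span$ are equivalence classes of tame spans, so the naturality square must respect equivalence; this is automatic since $\utilde{S_\alpha}$ depends only on the equivalence class of $S$. Second, to conclude that \emph{all} the functors are naturally isomorphic it is enough to have produced an iso $F_\alpha \To F_\beta$ for each pair, and these are mutually compatible: the exponents add, so $\eta^{\beta,\gamma}_X \circ \eta^{\alpha,\beta}_X = \eta^{\alpha,\gamma}_X$. There is no serious obstacle here; the only thing demanding care is the bookkeeping of the $\Aut$-exponents in the naturality square, which is exactly the one-line cancellation displayed above.
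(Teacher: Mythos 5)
Your proof is correct, and the natural isomorphism you construct is exactly the one in the paper: the paper's proof defines the diagonal operator $(T_X \psi)([x]) = |\Aut(x)|^{\alpha-\beta}\,\psi([x])$, which is precisely your $\eta_X^{-1}$, so up to direction the transformations coincide. What genuinely differs is the verification of naturality. The paper never expands the operators as sums; instead it invokes Proposition \ref{alpha_degroupoidification_vector} to obtain
\[
T_Y\, \utilde{S_\alpha}\,\utilde{\Psi_\alpha}
= T_Y\, \utilde{(S\Psi)_\alpha}
= \utilde{(S\Psi)_\beta}
= \utilde{S_\beta}\,\utilde{\Psi_\beta}
= \utilde{S_\beta}\, T_X\, \utilde{\Psi_\alpha}
\]
for every tame groupoid $\Psi$ over $X$, and then concludes $T_Y\,\utilde{S_\alpha} = \utilde{S_\beta}\,T_X$ because vectors of the form $\utilde{\Psi_\alpha}$ suffice to determine an operator (every nonnegative function on $\u{X}$ arises from some tame groupoid over $X$, a spanning fact established inside the proof of Theorem \ref{PROCESS1}). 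You instead check naturality by matching matrix entries in the explicit formula of Proposition \ref{cohomological_alpha_degroupoidification}, reducing everything to the exponent identity $(1-\beta)+(\beta-\alpha)=1-\alpha$. Both arguments are sound, and each buys something: yours is more elementary and self-contained, needing no spanning fact about which functions are realized by groupoids; the paper's avoids touching the possibly infinite sums altogether. In your computation the regrouping of the double sum is legitimate, but only because tameness (Theorem \ref{matrix1}) guarantees that for fixed $[y]$ only finitely many classes $[x]$ contribute and each inner sum converges---a point worth stating explicitly. Your two closing remarks (that $\utilde{S_\alpha}$ depends only on the equivalence class of the span, and the cocycle compatibility $\eta^{\beta,\gamma}_X \circ \eta^{\alpha,\beta}_X = \eta^{\alpha,\gamma}_X$) are correct; the latter is a small bonus beyond what the proposition demands, since pairwise isomorphism is all that is claimed.
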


\begin{proof} 
Given $\alpha,\beta \in \R$, Equation 
\ref{alpha_degroupoidification_formula} implies that
\[
\utilde{\Psi_\alpha}{[x]} = |\Aut(x)|^{\alpha-\beta} 
\utilde{\Psi_\beta}{[x]} 
\]
for any tame groupoid $\Psi$ over a groupoid $X$.  So, for any
groupoid $X$, define a linear operator
\[    T_X \maps \R[\u{X}] \to \R[\u{X}]  \]
by
\[    (T_X \psi)([x]) = |\Aut(x)|^{\alpha-\beta} \psi([x])  .\]
We thus have
\[
\utilde{\Psi_\alpha} = T_X \utilde{\Psi_\beta}  .
\]

By Proposition
\ref{alpha_degroupoidification_vector}, for any tame span $S$ 
from $X$ to $Y$ and any tame groupoid $\Psi$ over $X$ we have
\[
\begin{array}{ccl}
 T_Y \utilde{S_\alpha} \utilde{\Psi_\alpha} 
&=& T_Y \utilde{(S\Psi)_\alpha} \\
&=& \utilde{(S\Psi)_\beta} \\
&=& \utilde{S_\beta} \utilde{\Psi_\beta} \\
&=& \utilde{S_\beta} T_X \utilde{\Psi_\alpha} 
\end{array}
\]
Since this is true for all such $\Psi$, this implies
\[ T_Y \utilde{S_\alpha} = \utilde{S_\beta} T_X  .\]
So, $T$ defines a natural isomorphism between $\alpha$-degroupoidification
and $\beta$-degroupoidification.
\end{proof}

Now let us return to homology.  We can also do
$\alpha$-degroupoidification using zeroth homology instead of zeroth
cohomology.  Recall that while the zeroth cohomology of $X$ consists
of all real-valued functions on $\u{X}$, the zeroth homology consists
of such functions \textit{with finite support}.  So, we need to work
with groupoids over $X$ that give functions of this type:

\begin{definition} A groupoid $\Psi$ over $X$ is {\bf finitely supported} 
if it is tame and $\utilde{\Psi}$ is a finitely supported function on 
$\u{X}$.
\end{definition}

\noindent
Similarly, we must use spans of groupoids that give linear operators
preserving this finite support property:

\begin{definition}\label{finitetypespan}
A span:
\[\xymatrix{
& S \ar[dl]_{q} \ar[dr]^{p}& \\
Y & & X
}\]
is of {\bf finite type} if it is a tame span of groupoids and for any
finitely supported groupoid $\Psi$ over $X$, the groupoid $S\Psi$ over
$Y$ (formed by weak pullback) is also finitely supported.
\end{definition}

With these definitions, we can refine the previous propositions so
they apply to zeroth homology:

\begin{proposition}
\label{homological_alpha_degroupoidification}
Given any fixed real number $\alpha$ and a span of finite type:
\[\xymatrix{
 & S\ar[dl]_q\ar[dr]^p & \\
 Y & & X \\
}\]
there is a linear operator called its {\bf \boldmath
$\alpha$-degroupoidification}:
\[ \utilde{S_\alpha}\maps \R[\u{X}] \to \R[\u{Y}]  \]
given by:
\[   
\utilde{S_\alpha} [x] =
\sum_{[y] \in \u{Y}} \;\,
\sum_{[s]\in\u{p^{-1}(x)}\bigcap \u{q^{-1}(y)}}
\frac{|\Aut(x)|^{1-\alpha}|\Aut(y)|^\alpha}{|\Aut(s)|} \,\, [y]   \, .
\]
\end{proposition}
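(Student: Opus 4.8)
The plan is to recognize that the homological $\alpha$-degroupoidification is nothing but the cohomological $\alpha$-degroupoidification of Proposition \ref{cohomological_alpha_degroupoidification}, restricted to the finite-support subspaces $\R[\u{X}] \subseteq \R^{\u{X}}$ and $\R[\u{Y}] \subseteq \R^{\u{Y}}$. Indeed, under the identification of a basis vector $[x] \in \R[\u{X}]$ with the indicator function $\delta_{[x]} \in \R^{\u{X}}$, the stated formula for $\utilde{S_\alpha}[x]$ reads off exactly the values $(\utilde{S_\alpha}\delta_{[x]})([y])$ computed by the cohomological operator, since the outer sum over $\u{X}$ in Proposition \ref{cohomological_alpha_degroupoidification} collapses to the single term $[x]$. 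So there are only two things to check: that each coefficient is a well-defined real number (convergence of the inner sum), and that $\utilde{S_\alpha}$ preserves finite support, i.e. actually lands in $\R[\u{Y}]$.

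For convergence, I would note that a span of finite type is in particular tame (Definition \ref{finitetypespan}), so Proposition \ref{cohomological_alpha_degroupoidification} already guarantees that for each fixed $[y] \in \u{Y}$ the inner sum over $[s] \in \u{p^{-1}(x)} \cap \u{q^{-1}(y)}$ converges, by the tameness criterion of Theorem \ref{matrix1}. Hence every coefficient appearing in the formula for $\utilde{S_\alpha}[x]$ is a finite real number, and $\utilde{S_\alpha}\delta_{[x]}$ is a well-defined element of $\R^{\u{Y}}$.

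The heart of the argument---and the only place the finite-type hypothesis is used---is showing that $\utilde{S_\alpha}[x]$ is finitely supported for each basis vector $[x]$. Here I would realize $[x]$ groupoidically: let $\Psi$ be the terminal groupoid $1$ equipped with the functor $v \maps 1 \to X$ picking out the object $x$. Each full inverse image $v^{-1}(x')$ is either $1$ or empty, so $\Psi$ is tame with $\utilde{\Psi} = \delta_{[x]}$; thus $\Psi$ is a finitely supported groupoid over $X$. By Equation \ref{alpha_degroupoidification_formula}, $\utilde{\Psi_\alpha} = |\Aut(x)|^\alpha\,\delta_{[x]}$, still finitely supported. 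Because $S$ is of finite type, the weak pullback $S\Psi$ is a finitely supported groupoid over $Y$, so $\utilde{S\Psi}$ has finite support; and since $\utilde{(S\Psi)_\alpha}([y]) = |\Aut(y)|^\alpha\,\utilde{S\Psi}([y])$ by Equation \ref{alpha_degroupoidification_formula}, with $|\Aut(y)|^\alpha > 0$, the vector $\utilde{(S\Psi)_\alpha}$ has exactly the same finite support. Now Proposition \ref{alpha_degroupoidification_vector} gives
\[ \utilde{(S\Psi)_\alpha} = \utilde{S_\alpha}\,\utilde{\Psi_\alpha} = |\Aut(x)|^\alpha\,\utilde{S_\alpha}\,\delta_{[x]}, \]
so that $\utilde{S_\alpha}\,\delta_{[x]} = |\Aut(x)|^{-\alpha}\,\utilde{(S\Psi)_\alpha}$ is finitely supported, i.e. lies in $\R[\u{Y}]$.

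Finally, since $\{[x] : [x] \in \u{X}\}$ is a basis for $\R[\u{X}]$ and each basis vector is sent into $\R[\u{Y}]$ by the linear cohomological operator, its restriction is a well-defined linear map $\utilde{S_\alpha} \maps \R[\u{X}] \to \R[\u{Y}]$ whose matrix entries are precisely those displayed in the statement. I expect the main obstacle to be the finite-support step; everything else is inherited from the cohomological case and from Propositions \ref{cohomological_alpha_degroupoidification} and \ref{alpha_degroupoidification_vector}. One subtlety worth flagging is the bookkeeping that the rescaling factor $|\Aut(y)|^\alpha$ never vanishes, so that passing between $\utilde{S\Psi}$ and $\utilde{(S\Psi)_\alpha}$ does not alter supports---this is what lets the finite-type hypothesis, phrased via ordinary ($\alpha = 0$) cardinality, control the general $\alpha$ operator.
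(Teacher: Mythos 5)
Your proof is correct and takes precisely the route the paper intends: the paper states this proposition without an explicit proof, presenting it as a ``refinement'' of Proposition \ref{cohomological_alpha_degroupoidification} to zeroth homology, and your argument supplies exactly the missing content---restricting the cohomological operator to finitely supported functions, and using the finite-type hypothesis of Definition \ref{finitetypespan}, applied to the terminal groupoid over $X$ picking out $x$ (whose degroupoidification is $\delta_{[x]}$), together with Proposition \ref{alpha_degroupoidification_vector}, to show each $\utilde{S_\alpha}[x]$ lands in $\R[\u{Y}]$. Your closing observation that the rescaling factor $|\Aut(y)|^\alpha$ is strictly positive, so that passing between $\utilde{S\Psi}$ and $\utilde{(S\Psi)_\alpha}$ leaves supports unchanged, is exactly the bookkeeping needed to transfer the $\alpha=0$ phrasing of the finite-type condition to arbitrary $\alpha$.
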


\begin{proposition}\label{alphaspancomposefinite}
For any $\alpha \in \R$ there is a functor from the category of
groupoids and equivalence classes of spans of finite type to the
category of real vector spaces and linear operators, sending:
\begin{itemize}
\item
each groupoid $X$ to its zeroth homology $\R[\u{X}]$, and
\item 
each span $S$ of finite type from $X$ to $Y$ to the operator
$\utilde{S_\alpha}\maps \R[\u{X}] \to \R[\u{Y}]$.
\end{itemize}
Moreover, for all values of $\alpha \in \R$, these functors are
naturally isomorphic.
\end{proposition}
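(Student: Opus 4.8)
The plan is to reduce the homological statement to the cohomological functoriality already established in Proposition~\ref{alphaspancompose}, exploiting the fact that $\R[\u{X}]$ sits inside $\R^{\u{X}}$ as the subspace of finitely supported functions. The operators $\utilde{S_\alpha}$ supplied by Proposition~\ref{homological_alpha_degroupoidification} are built from exactly the same matrix entries $M_{[y][x]} = \sum_{[s]\in\u{p^{-1}(x)}\cap\u{q^{-1}(y)}} |\Aut(x)|^{1-\alpha}|\Aut(y)|^\alpha / |\Aut(s)|$ as in the cohomological case; the only change is that this matrix now acts on basis vectors (columns) rather than on arbitrary functions. Consequently, the purely algebraic content of functoriality transfers verbatim, and the genuinely new work is the finite-support bookkeeping that the finite-type hypothesis is designed to control.

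First I would check well-definedness. On objects, $X \mapsto \R[\u{X}]$ is the zeroth homology. On morphisms, Proposition~\ref{homological_alpha_degroupoidification} already gives a linear map $\utilde{S_\alpha}\maps \R[\u{X}] \to \R[\u{Y}]$, and what makes it land in the finite-support subspace is precisely Definition~\ref{finitetypespan}: the basis vector $[x]$ is represented, up to the nonzero scalar $|\Aut(x)|^{\alpha-1}$, by a one-object groupoid $\Psi_x$ over $X$, and the finite-type analogue of Proposition~\ref{alpha_degroupoidification_vector} identifies $\utilde{S_\alpha}\,\utilde{(\Psi_x)_\alpha}$ with $\utilde{(S\Psi_x)_\alpha}$, which is finitely supported because $S$ is of finite type (finite support being unaffected by the nonzero diagonal weights $|\Aut(-)|^\alpha$). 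I would also note that equivalent spans give equal operators, so the assignment descends to equivalence classes.

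Next I would establish functoriality. Identity spans send $[x]$ to $[x]$: the only contributing class is $[s]=[x]$ with $|\Aut(s)|=|\Aut(x)|$, which collapses the weight to $1$. For composition, given composable finite-type spans $S$ from $X$ to $Y$ and $T$ from $Y$ to $Z$, I would first show $TS$ is again of finite type by using associativity of weak pullback up to equivalence, $(TS)\Psi \simeq T(S\Psi)$, so that preservation of finite support by $S$ and then by $T$ yields preservation by $TS$. The identity $\utilde{(TS)_\alpha} = \utilde{T_\alpha}\,\utilde{S_\alpha}$ is then the same matrix-multiplication computation carried out for $\alpha=0$ in Theorem~\ref{functor} and Lemma~\ref{composition}, with the extra weights $|\Aut(-)|^{\pm\alpha}$ telescoping across the middle index $[y]$; crucially, the intermediate sum over $[y]$ is now a \emph{finite} sum because $\utilde{S_\alpha}[x]$ has finite support, so no convergence issue arises.

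Finally, for the naturality claim I would mimic the corresponding cohomological argument. For each groupoid $X$ define the diagonal operator $T_X\maps \R[\u{X}] \to \R[\u{X}]$ by $(T_X\psi)([x]) = |\Aut(x)|^{\alpha-\beta}\psi([x])$; this is a linear isomorphism preserving finite support, with inverse scaling by $|\Aut(x)|^{\beta-\alpha}$. Using Equation~\ref{alpha_degroupoidification_formula} together with the finite-type form of Proposition~\ref{alpha_degroupoidification_vector}, evaluating on any finitely supported $\Psi$ over $X$ gives $T_Y\,\utilde{S_\alpha}\,\utilde{\Psi_\alpha} = T_Y\,\utilde{(S\Psi)_\alpha} = \utilde{(S\Psi)_\beta} = \utilde{S_\beta}\,\utilde{\Psi_\beta} = \utilde{S_\beta}\,T_X\,\utilde{\Psi_\alpha}$, and since such vectors span $\R[\u{X}]$ this yields the naturality square $T_Y\,\utilde{S_\alpha} = \utilde{S_\beta}\,T_X$. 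I expect the main obstacle to be the finite-type bookkeeping in the composition step---confirming that weak-pullback composition is coherent enough that ``$S$ preserves finite support and $T$ preserves finite support'' really forces ``$TS$ preserves finite support''---rather than the algebraic identity, which carries over unchanged from the $\alpha=0$ case.
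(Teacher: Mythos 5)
Your proposal is correct and takes essentially the approach the paper intends: the paper states this proposition without proof, as a refinement of the cohomological case whose functoriality ``mimics that of Theorem~\ref{functor}'' and whose naturality is proved via the diagonal operators $T_X$ scaling by $|\Aut(x)|^{\alpha-\beta}$ --- precisely the two arguments you reproduce, together with the finite-type bookkeeping (finite support of $\utilde{S_\alpha}[x]$ via one-object groupoids, and $(TS)\Psi \simeq T(S\Psi)$ from associativity of weak pullback) that the paper leaves implicit. Your write-up correctly fills in those omitted details, so nothing further is needed.
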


\begin{proposition} 
Given a span of finite type:
\[\xymatrix{
& S \ar[dl]_{q} \ar[dr]^{p}& \\
Y & & X
}\]
and a finitely supported groupoid over $X$, say $v\maps\Psi\to X$,
then $S\Psi$ is a finitely supported groupoid
over $Y$, and $\utilde{(S\Psi)_\alpha} = 
\utilde{S_\alpha}\utilde{\Psi_\alpha}$.
\end{proposition}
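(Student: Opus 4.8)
The plan is to deduce this proposition from its cohomological counterpart, Proposition \ref{alpha_degroupoidification_vector}, by restricting to finitely supported functions. The key observation is that the zeroth homology $\R[\u{Z}]$ sits inside the zeroth cohomology $\R^{\u{Z}}$ as the subspace of finitely supported functions, with a basis vector $[z]$ corresponding to the indicator function $\delta_{[z]}$ that takes the value $1$ at $[z]$ and $0$ elsewhere. Under this inclusion I claim that the homological $\alpha$-degroupoidification vectors and operators of Proposition \ref{homological_alpha_degroupoidification} are precisely the restrictions of their cohomological versions, so that the cohomological identity descends.

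First I would dispose of the finite support claim. Since $S$ is a span of finite type and $\Psi$ is finitely supported, Definition \ref{finitetypespan} immediately gives that $S\Psi$ is a finitely supported groupoid over $Y$, establishing the first assertion. Moreover, by Equation \ref{alpha_degroupoidification_formula} the function $\utilde{(S\Psi)_\alpha}$ differs from $\utilde{S\Psi}$ only by the nonvanishing factor $|\Aut(y)|^\alpha$, so it has the same finite support and therefore genuinely defines a vector in $\R[\u{Y}]$.

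Next I would check that the two notions of $\alpha$-degroupoidification agree under the inclusion $\R[\u{Z}] \hookrightarrow \R^{\u{Z}}$. For vectors this is immediate: Equation \ref{alpha_degroupoidification_formula} produces the same function $[x] \mapsto |\Aut(x)|^\alpha |v^{-1}(x)|$ in either setting, and for finitely supported $\Psi$ this function lies in $\R[\u{X}]$. For operators, I would apply the cohomological operator of Proposition \ref{cohomological_alpha_degroupoidification} to the indicator function $\delta_{[x]}$; the outer sum over $[x']$ collapses, leaving for the value at $[y]$ exactly
\[
\sum_{[s] \in \u{p^{-1}(x)} \cap \u{q^{-1}(y)}}
\frac{|\Aut(x)|^{1-\alpha}|\Aut(y)|^\alpha}{|\Aut(s)|},
\]
which is precisely the coefficient of $[y]$ in $\utilde{S_\alpha}[x]$ from the homological formula of Proposition \ref{homological_alpha_degroupoidification}. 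Extending the latter linearly therefore recovers the cohomological operator on all of $\R[\u{X}]$, so the homological operator is the restriction of the cohomological one to the finite-support subspace.

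With these identifications in place the proposition follows formally. A span of finite type is in particular tame and a finitely supported groupoid is in particular tame, so Proposition \ref{alpha_degroupoidification_vector} applies and yields $\utilde{(S\Psi)_\alpha} = \utilde{S_\alpha}\utilde{\Psi_\alpha}$ as vectors in $\R^{\u{Y}}$. All three vectors here are finitely supported---the left side by the finite type hypothesis, the right side because $\utilde{\Psi_\alpha} \in \R[\u{X}]$ and the homological operator lands in $\R[\u{Y}]$---so the cohomological identity descends to the desired identity in $\R[\u{Y}]$. The only genuinely substantive step is the operator comparison: the asymmetry between the outer sum over $[x]$ in the cohomological formula and the outer sum over $[y]$ in the homological one is exactly what encodes the passage from acting on functions to acting on basis vectors, and verifying this bookkeeping is the main, if modest, obstacle.
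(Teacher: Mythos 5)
Your proof is correct, but it takes a different route from the one the paper intends. The paper states this proposition with no proof at all: it is meant to be read as the homological counterpart of Proposition \ref{alpha_degroupoidification_vector}, obtained the same way---by viewing the finitely supported groupoid $v \maps \Psi \to X$ as a span from the terminal groupoid $1$ to $X$ (a span which is of finite type precisely because $\utilde{\Lambda \times \Psi} = |\Lambda|\,\utilde{\Psi}$ for any finitely supported $\Lambda$ over $1$), and then invoking functoriality of homological $\alpha$-degroupoidification, Proposition \ref{alphaspancomposefinite}, so that $\utilde{(S\Psi)_\alpha} = \utilde{S_\alpha}\utilde{\Psi_\alpha}$ is just preservation of composition. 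You instead embed, for each groupoid $Z$, the homology $\R[\u{Z}]$ into $\R^{\u{Z}}$ as the finitely supported functions, verify that the homological vectors and operators are restrictions of the cohomological ones (your computation applying the cohomological operator to $\delta_{[x]}$ and collapsing the outer sum is exactly the right bookkeeping), and then descend the cohomological identity of Proposition \ref{alpha_degroupoidification_vector}; the finite-support assertion, as you note, is immediate from Definition \ref{finitetypespan}. The trade-off: the paper's route is a one-line appeal to functoriality, uniform with the rest of the section, but it rests on Proposition \ref{alphaspancomposefinite}, whose proof the paper also omits (it is said to mimic Theorem \ref{functor}); your route needs only the cohomological statement plus explicit formulas, and it has the side benefit of spelling out the compatibility between the homological and cohomological conventions under the inclusion $\R[\u{Z}] \hookrightarrow \R^{\u{Z}}$, which the paper never makes explicit. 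Both arguments are sound.
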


The moral of this section is that we have several choices to make
before we apply degroupoidification to any specific example.  The
choice of $\alpha$ is merely a matter of convenience, but there is a
real difference between homology and cohomology, at least for
groupoids with infinitely many nonisomorphic objects.  The process
described in Section \ref{degroupoidification} is the combination of
choosing to work with cohomology and the convention $\alpha=0$ for
degroupoidifying spans. This will suffice for the majority of this
paper.  However, we will use a different choice in our study of Hall
algebras.

\section{Groupoidification}
\label{applications}

Degroupoidification is a systematic process; groupoidification is the
attempt to undo this process.  The previous section explains
degroupoidification---but not why groupoidification is interesting.
The interest lies in its applications to concrete examples.  So, let
us sketch three: Feynman diagrams, Hecke algebras, and Hall algebras.

\subsection{Feynman Diagrams}
\label{feynman}

One of the first steps in developing quantum theory was Planck's new
treatment of electromagnetic radiation.  Classically, electromagnetic
radiation in a box can be described as a collection of harmonic
oscillators, one for each vibrational mode of the field in the box.
Planck `quantized' the electromagnetic field by assuming that the energy
of each oscillator could only take discrete, evenly spaced values: if
by fiat we say the lowest possible energy is $0$, the allowed energies
take the form $n \hbar \omega$, where $n$ is any natural number,
$\omega$ is the frequency of the oscillator in question, and $\hbar$
is Planck's constant.

Planck did not know what to make of the number $n$, but Einstein and
others later interpreted it as the number of `quanta' occupying the
vibrational mode in question.  However, far from being particles in
the traditional sense of tiny billiard balls, quanta are curiously
abstract entities---for example, all the quanta occupying a given
mode are indistinguishable from each other.

In a modern treatment, states of a quantized harmonic oscillator are
described as vectors in a Hilbert space called `Fock space'.  This
Hilbert space consists of formal power series.  For a full treatment 
of the electromagnetic field we would need power series in many 
variables, one for each vibrational mode.  But to keep things simple, 
let us consider power series in one variable.  In this case, the
vector $z^n/n!$ describes a state in which $n$ quanta are present.  
A general vector in Fock space is a convergent linear combination of
these special vectors.   More precisely, the {\bf Fock space} 
consists of $\psi \in \C[[z]]$ with $\langle \psi, \psi \rangle < 
\infty$, where the inner product is given by
\begin{equation}
\label{fock_inner_product} 
         \left\langle 
         \sum a_n z^n \, , \,
         \sum b_n z^n 
         \right\rangle 
\; = \;
\sum n! \,\,  \overline{a}_n b_n \, .
\end{equation}

But what is the meaning of this inner product?  It is precisely the
inner product in $L^2(E)$, where $E$ is the groupoid of finite sets!
This is no coincidence.  In fact, there is a deep relationship between
the mathematics of the quantum harmonic oscillator and the
combinatorics of finite sets.  This relation suggests a program of
{\it groupoidifying} mathematical tools from quantum theory, such as
annihilation and creation operators, field operators and their
normal-ordered products, Feynman diagrams, and so on.  This program
was initiated by Dolan and one of the current authors
\cite{BaezDolan:2001}.  Later, it was developed much further by Morton
\cite{Morton:2006}.  Guta and Maassen \cite{GM:2002} and Aguiar and
Maharam \cite{AM:2008} have also done relevant work.  Here we just
sketch some of the basic ideas.

First, let us see why the inner product on Fock space matches the
inner product on $L^2(E)$ as described in Theorem \ref{innerprod_theorem}.
We can compute the latter inner product using a convenient basis.  Let
$\Psi_n$ be the groupoid with $n$-element sets as objects and
bijections as morphisms.  Since all $n$-element sets are isomorphic
and each one has the permutation group $S_n$ as automorphisms, we have
an equivalence of groupoids
\[               \Psi_n \simeq 1 /\!/S_n . \]
Furthermore, $\Psi_n$ is a groupoid over $E$ in an obvious way:
\[                   v \maps \Psi_n \to  E . \]
We thus obtain a vector $\utilde{\Psi}_n \in \R^{\u{E}}$
following the rule described in Definition
\ref{degroupoidification_of_vectors}.  We can describe this vector as
a formal power series using the isomorphism
\[          \R^{\u{E}} \cong \R[[z]]  
\]
described in Example \ref{power_series}.  To do this, note that
\[            v^{-1} (m) \simeq 
\begin{cases}
 1 /\!/S_n  &  m = n  \\
  0         &  m \ne n 
\end{cases}
\]
where $0$ stands for the empty groupoid.  It follows that
\[            |v^{-1} (m)| =
\begin{cases}
  1/n!  &  m = n  \\
  0         &  m \ne n 
\end{cases}
\]
and thus
\[     \utilde{\Psi}_n = \sum_{m \in \N} |v^{-1}(m)| \, z^m = 
\frac{z^n}{n!}  .\]

Next let us compute the inner product in $L^2(E)$.  Since finite
linear combinations of vectors of the form $\utilde{\Psi}_n$ are dense
in $L^2(E)$ it suffices to compute the inner product of two vectors of
this form.  We can use the recipe in Theorem \ref{innerprod_theorem}.  
So, we start by taking the weak pullback of the corresponding groupoids 
over $E$:
\[
\xymatrix{
& \ip{\Psi_m}{\Psi_n} \ar[dl] \ar[dr] & \\
\Psi_m \ar[dr] & & \Psi_n \ar[dl] \\
& E &
}
\]
An object of this weak pullback consists of an $m$-element set $S$, an
$n$-element set $T$, and a bijection $\alpha \maps S \to T$.  A
morphism in this weak pullback consists of a commutative square of
bijections:
\[
\xymatrix{
S \ar[d]_{f} \ar[r]^{\alpha} & T \ar[d]^{g} \\
S' \ar[r]_{\alpha'} & T'
}
\]
So, there are no objects in $\ip{\Psi_m}{\Psi_n}$ when $n \ne m$.
When $n = m$, all objects in this groupoid are isomorphic, and each
one has $n!$ automorphisms.  It follows that
\[       \langle \utilde{\Psi}_m, \utilde{\Psi}_n \rangle = 
| \langle \Psi_m, \Psi_n \rangle | = 
\begin{cases}
 1/n! &  m = n  \\
  0           & m \ne n 
\end{cases}
\]
Using the fact that $\utilde{\Psi}_n = z^n/n!$, we see that
this is precisely the inner product in Equation \ref{fock_inner_product}.
So, as a complex Hilbert space, Fock space is the complexification of 
$L^2(E)$.

It is worth reflecting on the meaning of the computation we just did.
The vector $\utilde{\Psi}_n = z^n/n!$ describes a state of the quantum
harmonic oscillator in which $n$ quanta are present.  Now we see that
this vector arises from the groupoid $\Psi_n$ over $E$.  In Section
\ref{intro} we called a groupoid over $E$ a {\bf stuff type}, since it
describes a way of equipping finite sets with extra stuff.  The stuff
type $\Psi_n$ is a very simple special case, where the stuff is simply
`being an $n$-element set'.  So, groupoidification reveals the
mysterious `quanta' to be simply elements of finite sets.  Moreover,
the formula for the inner product on Fock space arises from the fact
that there are $n!$ ways to identify two $n$-element sets.

The most important operators on Fock space are the annihilation and
creation operators.  If we think of vectors in Fock space as formal
power series, the {\bf annihilation operator} is given by
\[        (a \psi)(z) = \frac{d}{dz} \psi(z)   \]
while the {\bf creation operator} is given by
\[        (a^* \psi)(z) = z \psi(z)  .\]
As operators on Fock space, these are only densely defined: for example,
they map the dense subspace $\C[z]$ to itself.  However, we can also think 
of them as operators from $\C[[z]]$ to itself.  In physics these 
operators decrease or increase the number of quanta in a state, since
\[        az^n = n z^{n-1}  , \qquad a^* z^n = z^{n+1}.\]
Creating a quantum and then annihilating one is not the same as 
annhilating and then creating one, since
\[        a a^* = a^* a + 1  .\]
This is one of the basic examples of noncommutativity in quantum theory.

The annihilation and creation operators arise from spans by 
degroupoidification, using the recipe described in Theorem 
\ref{PROCESS1}.  The annihilation operator comes from this span:
\[
\xymatrix{
& E\ar[dl]_{1} \ar[dr]^{S \mapsto S+1} & \\
E & & E
}
\]
where the left leg is the identity functor and the right leg is the
functor `disjoint union with a 1-element set'.  Since it is ambiguous
to refer to this span by the name of the groupoid on top, as we have
been doing, we instead call it $A$.  Similarly, we call its adjoint
$A^*$:
\[
\xymatrix{
& E\ar[dl]_{S \mapsto S+1} \ar[dr]^{1} & \\
E & & E
}
\]
A calculation \cite{Morton:2006} shows that indeed:
\[         \utilde{A} = a, \qquad \utilde{A}^* = a^*  .\]
Moreover, we have an equivalence of spans:
\[           AA^* \simeq A^*A + 1 . \]
Here we are using composition of spans, addition of spans and the
identity span as defined in Section \ref{degroupoidification}.  If we
unravel the meaning of this equivalence, it turns out to be very
simple \cite{BaezDolan:2001}.  If you have an urn with $n$ balls in
it, there is one more way to put in a ball and then take one out than
to take one out and then put one in.  Why?  Because in the first
scenario there are $n+1$ balls to choose from when you take one out,
while in the second scenario there are only $n$.  So, the
noncommutativity of annihilation and creation operators is not a
mysterious thing: it has a simple, purely combinatorial explanation.

We can go further and define a span
\[   \Phi = A + A^*   \]
which degroupoidifies to give the well-known {\bf field operator}
\[   \phi = \utilde{\Phi} = a + a^*  \]
Our normalization here differs from the usual one in physics because
we wish to avoid dividing by $\sqrt{2}$, but all the usual physics
formulas can be adapted to this new normalization.  

The powers of the span $\Phi$ have a nice combinatorial
interpretation.  If we write its $n$th power as follows:
\[
\xymatrix{
& \Phi^n \ar[dl]_{q} \ar[dr]^{p} & \\
E & & E
}
\]
then we can reinterpret this span as a groupoid over $E \times E$:
\[
\xymatrix{
\Phi^n \ar[d]_{q \times p} \\
E \times E
}
\]
Just as a groupoid over $E$ describes a way of equipping a finite
set with extra stuff, a groupoid over $E \times E$ describes a way
of equipping a {\it pair} of finite sets with extra stuff.  
And in this example, the extra stuff in question is a very simple 
sort of diagram!  

More precisely, we can draw an object of $\Phi^n$ as a $i$-element set
$S$, a $j$-element set $T$, a graph with $i+j$ univalent vertices and
a single $n$-valent vertex, together with a bijection between the
$i+j$ univalent vertices and the elements of $S + T$.  It is against
the rules for vertices labelled by elements of $S$ to be connected by
an edge, and similarly for vertices labelled by elements of $T$.  The
functor $p \times q \maps \Phi^n \to E \times E$ sends such an object
of $\Phi^n$ to the pair of sets $(S,T) \in E \times E$.

An object of $\Phi^n$ sounds like a complicated thing, but it can be
depicted quite simply as a {\bf Feynman diagram}.  Physicists
traditionally read Feynman diagrams from bottom to top.  So, we draw
the above graph so that the univalent vertices labelled by elements of
$S$ are at the bottom of the picture, and those labelled by elements
of $T$ are at the top.  For example, here is an object of $\Phi^3$,
where $S = \{1,2,3\}$ and $T = \{4,5,6,7\}$:

\begin{center}
\begin{picture}(110,120)
  \includegraphics[scale=0.7]{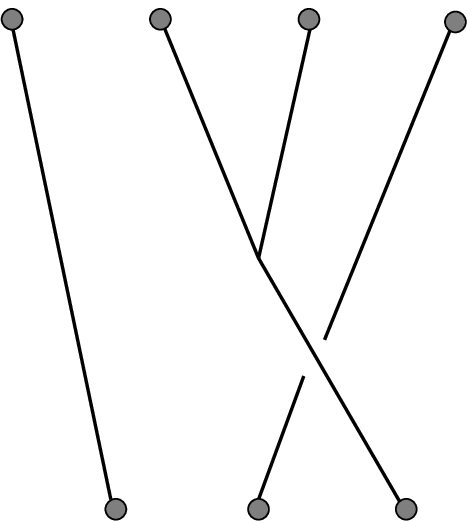} 
  \put(-96,110){$5$}
  \put(-66,110){$4$}
  \put(-36,110){$7$}
  \put(-6,110){$6$}
  \put(-76,-13){$1$}
  \put(-46,-13){$3$}
  \put(-16,-13){$2$}
\end{picture}
\end{center}

\vskip 0.5em
\noindent
In physics, we think of this as a process where 3 particles come
in and 4 go out.  

Feynman diagrams of this sort are allowed to have {\bf self-loops}:
edges with both ends at the same vertex.  So, for example, this is
a perfectly fine object of $\Phi^5$ with $S = \{1,2,3\}$ and 
$T = \{4,5,6,7\}$:

\begin{center}
\begin{picture}(110,120)
  \includegraphics[scale=0.7]{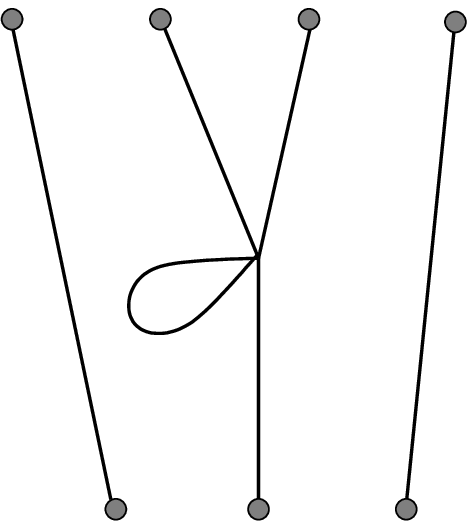}
  \put(-96,110){$5$}
  \put(-66,110){$4$}
  \put(-36,110){$6$}
  \put(-6,110){$7$}
  \put(-76,-13){$2$}
  \put(-46,-13){$3$}
  \put(-16,-13){$1$}
\end{picture}
\end{center}

\vskip 0.5em
\noindent
To eliminate self-loops, we can work with the {\bf normal-ordered
powers} or `Wick powers' of $\Phi$, denoted $\w\Phi^n\,\w\,$.  These 
are the spans obtained by taking $\Phi^n$, expanding it in terms
of the annihilation and creation operators, and moving all the 
annihilation operators to the right of all the creation operators 
`by hand', ignoring the fact that they do not commute.  For example: 
\begin{eqnarray*}     
          \w\Phi^0\,\w &=& 1    \\
          \w\Phi^1\,\w &=& A + A^*  \\
          \w\Phi^2\,\w &=& A^2 + 2A^* A + {A^*}^2 \\
          \w\Phi^3\,\w &=& A^3 + 3A^* A^2 + 3{A^*}^2 A + {A^*}^3 
\end{eqnarray*}
and so on.  Objects of $\w \Phi^n \w$ can be drawn as Feynman
diagrams just as we did for objects of $\Phi^n$.  There is just
one extra rule: self-loops are not allowed.

In quantum field theory one does many calculations involving 
products of normal-ordered powers of field operators.   Feynman 
diagrams make these calculations easy.  In the groupoidified context,
a product of normal-ordered powers is a span
\[
\xymatrix{
&       \w\Phi^{n_1}\,\w \; \cdots\; \w\Phi^{n_k}\,\w  
\ar[dl]_>>>>>>>{q} \ar[dr]^>>>>>>>{p} & \\
E & & E \, .
}
\]
As before, we can draw an object of the groupoid $\w\Phi^{n_1}\,\w \;
\cdots\; \w\Phi^{n_k}\,\w$ as a Feynman diagram.  But now these
diagrams are more complicated, and closer to those seen in physics
textbooks.  For example, here is a typical object of $\w \Phi^3 \w \,
\w \Phi^3 \w \, \w \Phi^4 \w$, drawn as a Feynman diagram:

\begin{center}
\begin{picture}(110,130)
  \includegraphics[scale=0.7]{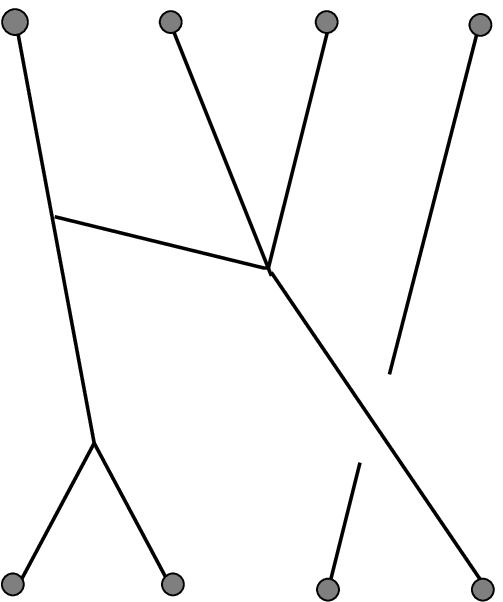}
 \put(-100,125){$5$}
  \put(-68,125){$8$}
  \put(-36,125){$7$}
  \put(-6,125){$6$}
  \put(-101,-11){$1$}
  \put(-68,-11){$4$}
  \put(-36,-11){$2$}
  \put(-5,-11){$3$} 
\end{picture}
\end{center}

\vskip 0.5em
\noindent
In general, a {\bf Feynman diagram} for an object of $\w\Phi^{n_1}\,\w
\; \cdots\; \w\Phi^{n_k}\,\w$ consists of an $i$-element set $S$, a
$j$-element set $T$, a graph with $k$ vertices of valence $n_1, \dots,
n_k$ together with $i+j$ univalent vertices, and a bijection between
these univalent vertices and the elements of $S+T$.  Self-loops are
forbidden; it is against the rules for two vertices labelled by
elements of $S$ to be connected by an edge, and similarly for two
vertices labelled by elements of $T$.  As before, the forgetful
functor $p \times q$ sends any such object to the pair of sets $(S,T)
\in E \times E$.

The groupoid $\w\Phi^{n_1}\,\w \; \cdots\; \w\Phi^{n_k}\,\w$ also contains
interesting automorphisms.  These come from {\it symmetries} of Feynman
diagrams: that is, graph automorphisms fixing the univalent
vertices labelled by elements of $S$ and $T$.  These symmetries play
an important role in computing the operator corresponding to this span:
\[
\xymatrix{
&       \w\Phi^{n_1}\,\w \; \cdots\; \w\Phi^{n_k}\,\w  
\ar[dl]_>>>>>>>{q} \ar[dr]^>>>>>>>{p} & \\
E & & E \, .
}
\]
As is evident from Theorem \ref{matrix1}, when a Feynman diagram has
symmetries, we need to divide by the number of symmetries when
determining its contribution to the operator coming from the above
span.  This rule is well-known in quantum field theory; here we see it
arising as a natural consequence of groupoid cardinality.

\subsection{Hecke Algebras}
\label{hecke}

 Here we sketch how to groupoidify a Hecke
algebra when the parameter $q$ is a power of a prime number and the
finite reflection group comes from a Dynkin diagram in this way.  More
details will appear in future work \cite{Baez}.

Let $D$ be a Dynkin diagram.  We write $d \in D$ to mean that $d$ is a
dot in this diagram.  Associated to each unordered pair of dots $d,d'
\in D$ is a number $m_{dd'} \in \{2,3,4,6\}$.  In the usual Dynkin
diagram conventions:
\begin{itemize}
\item
$m_{dd'} = 2$ is drawn as no edge at all, 
\item
$m_{dd'} = 3$ is drawn as a single edge, 
\item 
$m_{dd'} = 4$ is drawn as a double edge, 
\item
$m_{dd'} = 6$ is drawn as a triple edge.  
\end{itemize}
For any nonzero number $q$, our Dynkin diagram gives a Hecke algebra.
Since we are using real vector spaces in this paper, we work with the
Hecke algebra over $\R$:
\begin{definition}
Let $D$ be a Dynkin diagram and $q$ a nonzero real number.  The
{\bf Hecke algebra} $H(D,q)$ corresponding to this data is the associative
algebra over $\R$ with one generator $\sigma_d$ for each $d \in D$,
and relations:
\[\sigma_d^2 = (q-1)\sigma_d + q \]
for all $d \in D$, and
\[\sigma_d\sigma_{d'}\sigma_d\cdots 
= \sigma_{d'} \sigma_d \sigma_{d'}\cdots \]
for all $d,d'\in D$, where each side has $m_{dd'}$ factors.
\end{definition}
\noindent

Hecke algebras are $q$-deformations of finite reflection groups, also
known as Coxeter groups \cite{Humphreys}.  Any Dynkin diagram gives
rise to a simple Lie group, and the Weyl group of this simple Lie
algebra is a Coxeter group. 
To begin understanding Hecke algebras, it is useful to note that
when $q = 1$, the Hecke algebra is simply the group algebra of
the {\bf Coxeter group} associated to $D$: that is, the group
with one generator $s_d$ for each dot $d \in D$, and relations
\[ s_d^2 = 1,  \qquad (s_d s_{d'})^{m_{dd'}} = 1 .\]
So, the Hecke algebra can be thought of as a $q$-deformation of this
Coxeter group.

If $q$ is a power of a prime number, the Dynkin diagram $D$ determines
a simple algebraic group $G$ over the field with $q$ elements, $\F_q$.
We choose a Borel subgroup $B \subseteq G$, i.e., a maximal solvable
subgroup.  This in turn determines a transitive $G$-set $X = G/B$.
This set is a smooth algebraic variety called the {\bf flag variety}
of $G$, but we only need the fact that it is a finite set equipped
with a transitive action of the finite group $G$.  Starting from just
this $G$-set $X$, we can groupoidify the Hecke algebra $H(D,q)$.

Recalling the concept of `action groupoid' from Section \ref{intro},
we define the {\bf groupoidified Hecke algebra} to be
\[ (X \times X)\Over G. \]
This groupoid has one isomorphism class of objects for each $G$-orbit
in $X \times X$:
\[ \u{(X \times X)\Over G} \cong (X \times X)/G. \]
The well-known `Bruhat decomposition' \cite{Borel} of 
$X/G$ shows there is one such orbit for each element of the Coxeter group 
associated to $D$.  Since the Hecke algebra has a standard basis given
by elements of the Coxeter group \cite{Humphreys}, it follows that
$(X \times X)\Over G$ degroupoidifies to give the underlying vector 
space of the Hecke algebra.  In other words, we obtain an
isomorphism of vector spaces
\[   \R^{(X \times X)/G} \cong H(D,q) .\]

Even better, we can groupoidify the {\it multiplication} in the Hecke
algebra.  In other words, we can find a span that degroupoidifies to
give the linear operator
\[  
\begin{array}{ccc} 
     H(D,q) \tensor H(D,q) & \to & H(D,q)   \\
         a \tensor b       & \mapsto & ab 
\end{array}
\]
This span is very simple:
\begin{equation}
\label{multiplication}
  \def\objectstyle{\scriptstyle}
  \def\labelstyle{\scriptstyle}
   \xy
   (0,30)*+{(X \times X \times X)\Over G}="1";
   (25,10)*+{(X\times X)\Over G \;\,\times\;\, (X\times X)\Over G}="2";
   (-25,10)*+{(X\times X)\Over G}="3";
        {\ar_{(p_1,p_2)\times (p_2,p_3)} "1";"3"};
        {\ar^{(p_1,p_3)} "1";"2"};
\endxy
\end{equation}
where $p_i$ is projection onto the $i$th factor.

For a proof that this span degroupoidifies to give the desired
linear operator, see \cite{Hoffnung}.
The key is that for each dot $d \in D$ there is a special
isomorphism class in $(X \times X)\Over G$, and the function 
\[   \psi_d \maps (X \times X)/G \to \R \]
that equals 1 on this isomorphism class and 0 on the rest 
corresponds to the generator $\sigma_d \in H(D,q)$.  

To illustrate these ideas, let us consider the simplest nontrivial
example, the Dynkin diagram $A_2$:
\[\xymatrix{ \bullet\ar@{-}[r] & \bullet}\]
The Hecke algebra associated to $A_2$ has two generators, which we
call $P$ and $L$, for reasons soon to be revealed:
\[        P = \sigma_1 , \qquad L = \sigma_2 . \]
The relations are
\[     P^2 = (q-1) P + q, \qquad L^2= (q-1)P + q,  \qquad PLP=LPL  .\]
It follows that this Hecke algebra is a quotient of the group algebra
of the 3-strand braid group, which has two generators $P$ and $L$ and
one relation $PLP = LPL$, called the {\bf Yang--Baxter equation} or
{\bf third Reidemeister move}.  This is why Jones could use traces on
the $A_n$ Hecke algebras to construct invariants of knots
\cite{Jones}.  This connection to knot theory makes it especially
interesting to groupoidify Hecke algebras.

So, let us see what the groupoidified Hecke algebra looks like, and
where the Yang--Baxter equation comes from.  The algebraic group
corresponding to the $A_2$ Dynkin diagram and the prime power $q$ is
$G = \SL(3,\F_q)$, and we can choose the Borel subgroup $B$ to consist
of upper triangular matrices in $\SL(3,\F_q)$.  Recall that a {\bf
complete flag} in the vector space $\F_q^3$ is a pair of subspaces
\[           0 \subset V_1 \subset V_2 \subset \F_q^3  . \]
The subspace $V_1$ must have dimension 1, while $V_2$ must have
dimension 2.  Since $G$ acts transitively on the set of complete
flags, while $B$ is the subgroup stabilizing a chosen flag, the flag
variety $X = G/B$ in this example is just the set of complete flags in
$\F_q^3$---hence its name.

We can think of $V_1 \subset \F_q^3$ as a point in the projective
plane $\F_q{\mathrm P}^2$, and $V_2 \subset \F_q^3$ as a line in this
projective plane.  From this viewpoint, a complete flag is a chosen
point lying on a chosen line in $\F_q {\mathrm P}^2$.  This viewpoint
is natural in the theory of `buildings', where each Dynkin diagram
corresponds to a type of geometry \cite{Brown,Garrett}.  Each dot in
the Dynkin diagram then stands for a `type of geometrical figure',
while each edge stands for an `incidence relation'.  The $A_2$ Dynkin
diagram corresponds to projective plane geometry.  The dots in this
diagram stand for the figures `point' and `line':
\[\xymatrix{ {\rm point} \; \bullet \ar@{-}[r] & 
\bullet \; {\rm line} }\] 
The edge in this diagram stands for the incidence
relation `the point $p$ lies on the line $\ell$'.  

We can think of $P$ and $L$ as special elements of the $A_2$ Hecke
algebra, as already described.  But when we groupoidify the Hecke
algebra, $P$ and $L$ correspond to {\it objects} of $(X \times X)\Over
G$.  Let us describe these objects and explain how the Hecke algebra
relations arise in this groupoidified setting.

As we have seen, an isomorphism class of objects in $(X \times X)\Over
G$ is just a $G$-orbit in $X \times X$.  These orbits in turn
correspond to spans of $G$-sets from $X$ to $X$ that are {\bf
irreducible}: that is, not a coproduct of other spans of $G$-sets.
So, the objects $P$ and $L$ can be defined by giving irreducible spans
of $G$-sets:
\[
\xymatrix{
    & P \ar[dl]\ar[dr] &    &&     & L\ar[dl]\ar[dr] & \\
X &                  &X && X &                 &X
}
\]

In general, any span of $G$-sets
\[
\xymatrix{
& S \ar[dl]_{q} \ar[dr]^{p} & \\
X & & X}
\]
such that $q \times p \maps S \to X \times X$ is injective can be
thought of as $G$-invariant binary relation between elements of $X$.
Irreducible $G$-invariant spans are always injective in this sense.
So, such spans can also be thought of as $G$-invariant relations between
flags.  In these terms, we define $P$ to be the relation that says 
two flags have the same line, but different points:
\[ P = \{((p,\ell),(p',\ell)) \in X \times X \mid p\neq p'\}
\]
Similarly, we think of $L$ as a relation saying two flags
have different lines, but the same point:
\[ L = 
\{((p,\ell),(p,\ell')) \in X \times X \mid \ell\neq \ell'\}. \]
Given this, we can check that
\[ P^2 \cong (q-1) \times P + q \times 1, \qquad L^2 \cong (q-1) \times L + 
q \times 1, \qquad PLP \cong LPL . \] 
Here both sides refer to spans of $G$-sets, and we denote a span by
its apex.  Addition of spans is defined using coproduct, while $1$
denotes the identity span from $X$ to $X$.  We use `$q$' to stand for
a fixed $q$-element set, and similarly for `$q-1$'.  We compose spans
of $G$-sets using the ordinary pullback.  It takes a bit of thought to
check that this way of composing spans of $G$-sets matches the product
described by Equation \ref{multiplication}, but it is indeed the case
\cite{Hoffnung}.

To check the existence of the first two isomorphisms above, we just
need to count.  In $\mathbb{F}_q\mathrm{P}^2$, the are $q+1$ points on any
line.  So, given a flag we can change the point in $q$ different ways.
To change it again, we have a choice: we can either send it back to
the original point, or change it to one of the $q-1$ other points.
So, $P^2 \cong (q-1) \times P + q \times 1$.  Since there are also
$q+1$ lines through any point, similar reasoning shows that $L^2 \cong
(q-1) \times L + q \times 1$.

The Yang--Baxter isomorphism
\[          PLP \cong LPL  \]
is more interesting.  We construct it as follows.  First consider the
left-hand side, $PLP$.  So, start with a complete flag called
$(p_1,\ell_1)$:
\hfill \break
\begin{center}
\begin{picture}(50,25)
  \includegraphics[scale=0.35]{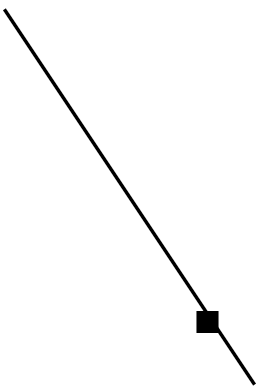}
  \put(-20,2){$p_1$}
  \put(0,-10){$\ell_1$}
\end{picture}
\end{center}
\medskip
Then, change the point to obtain a flag $(p_2,\ell_1)$.
Next, change the line to obtain a flag $(p_2,\ell_2)$.
Finally, change the point once more, which gives us the flag $(p_3,\ell_2)$:   
\begin{center}
  \begin{picture}(250,45)
  \includegraphics[scale=0.35]{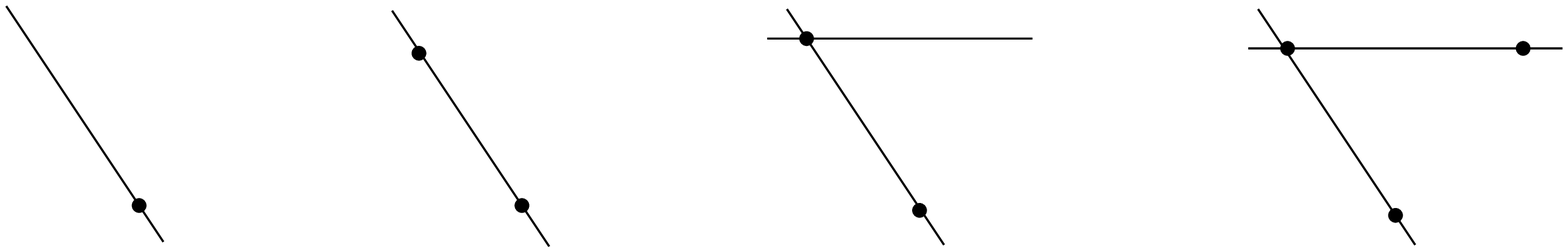}
  \put(-245,4){$p_1$}
  \put(-225,-10){$\ell_1$}
  \put(-185,4){$p_1$}
  \put(-165,-10){$\ell_1$}
  \put(-200,25){$p_2$}
  \put(-120,4){$p_1$}
  \put(-100,-10){$\ell_1$}
  \put(-130,23){$p_2$}
  \put(-80,35){$\ell_2$}
  \put(-40,4){$p_1$}
  \put(-20,-10){$\ell_1$}
  \put(-55,23){$p_2$}
  \put(5,35){$\ell_2$}
  \put(-15,40){$p_3$}
  \end{picture}
\end{center}  
\medskip
\noindent 
The figure on the far right is a typical element of $PLP$.

On the other hand, consider $LPL$.  So, start with the same flag as
before, but now change the line, obtaining $(p_1,\ell'_2)$.  Next
change the point, obtaining the flag $(p'_2,\ell'_2)$.  Finally,
change the line once more, obtaining the flag $(p'_2,\ell'_3)$:
\hfill \break
\medskip
\begin{center}
\begin{picture}(240,40)
  \includegraphics[scale=0.35]{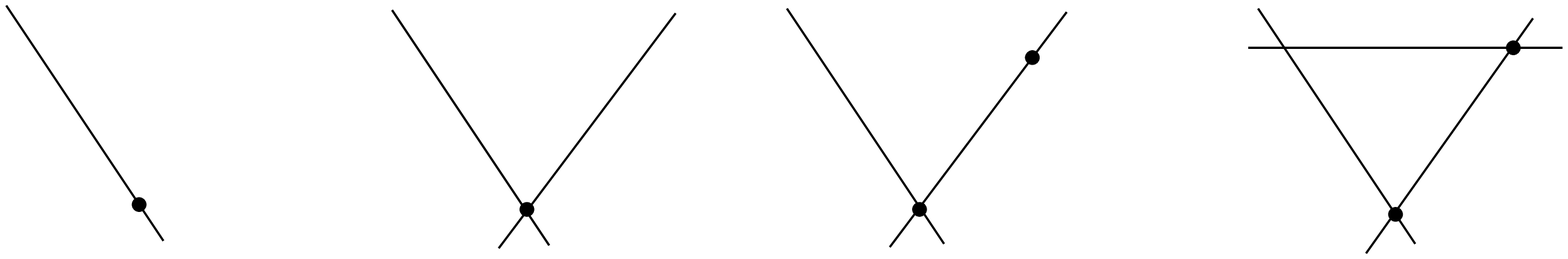}
  \put(-245,3){$p_1$}
  \put(-225,-10){$\ell_1$}
  \put(-185,3){$p_1$}
  \put(-170,-10){$\ell_1$}
  \put(-150,45){$\ell_2'$} 
  \put(-120,3){$p_1$}
  \put(-100,-10){$\ell_1$}
  \put(-85,45){$\ell_2'$} 
  \put(-85,20){$p_2'$}    
  \put(-45,4){$p_1$}
  \put(-25,-10){$\ell_1$}
  \put(-10,43){$\ell_2'$} 
  \put(-10,20){$p_2'$}  
  \put(-60,25){$\ell_3'$}   
\end{picture}
\end{center}
\medskip
\noindent
The figure on the far right is a typical element of $LPL$.

Now, the axioms of projective plane geometry say that any two distinct
points lie on a unique line, and any two distinct lines intersect in a
unique point.  So, any figure of the sort shown on the left below
determines a unique figure of the sort shown on the right, and vice
versa: 
\hfill\break
\begin{center}
\begin{picture}(150,50)
\includegraphics[scale=0.50]{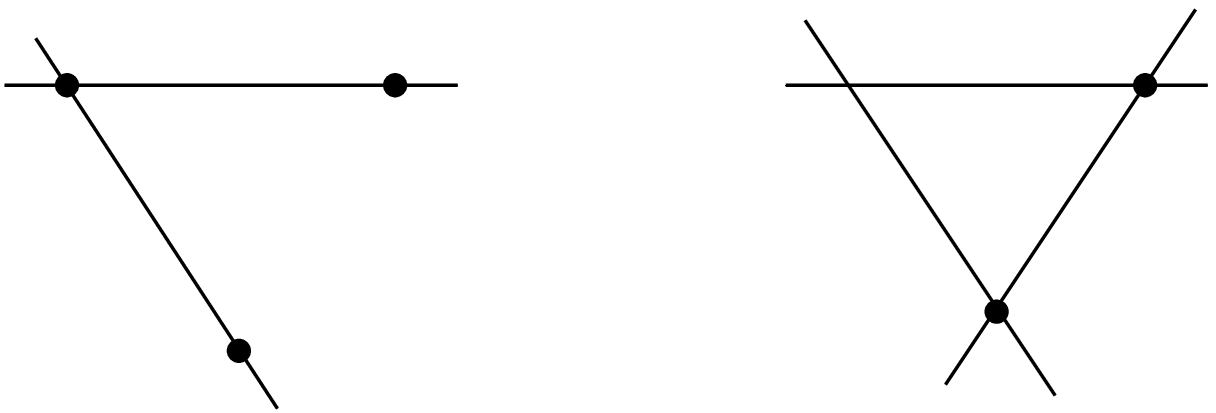}
\end{picture}
\end{center}
\medskip
\noindent
Comparing this with the pictures above, we see this bijection induces
an isomorphism of spans $PLP \cong LPL$.  So, we have derived the
Yang--Baxter isomorphism from the axioms of projective plane geometry!

To understand groupoidified Hecke algebras, it is important to keep
straight the two roles played by spans.  On the one hand, objects of
the groupoidified Hecke algebra $(X \times X)/\!/G$ can be described
as certain spans from $X$ to $X$, namely the injective $G$-invariant
ones.  Multiplying these objects then corresponds to composing spans.
On the other hand, Equation \ref{multiplication} gives a span describing the
multiplication in $(X \times X)/\!/G$.  In fact, this span
describes the process of composing spans.  If this seems hopelessly
confusing, remember that any matrix describes a linear operator, but
there is also a linear operator describing the process of matrix
multiplication.  We are only groupoidifying that idea.

Other approaches to categorified Hecke algebras and their
representations have been studied by a number of authors, building on
Kazhdan--Lusztig theory \cite{KaLu}.  One key step was Soergel's
introduction of what are nowadays called Soergel bimodules
\cite{Rou,Soergel}.  Also important were Khovanov's categorification
of the Jones polynomial \cite{Kh} and the work by Bernstein, Frenkel,
Khovanov and Stroppel on categorifying Temperley--Lieb algebras,
which are quotients of the type $A$ Hecke algebras
\cite{BFK,Stroppel}.  A diagrammatic interpretation of the Soergel
bimodule category was developed by Elias and Khovanov \cite{ElKh}, and
a geometric approach led Webster and Williamson \cite{WW} to deep
applications in knot homology theory.  This geometric interpretation
can be seen as going beyond the simple form of groupoidification we
consider here, and considering groupoids in the category of schemes.

\subsection{Hall Algebras}\label{hall}

The Hall algebra of a quiver is a very natural example of
groupoidification, and a very important one, since it lets us 
groupoidify `half of a quantum group'.  However, to obtain the usual
formula for the Hall algebra product, we need to exploit one of the
alternative conventions explained in Section \ref{homology}.  In this
section we begin by quickly reviewing the usual theory of Hall
algebras and their relation to quantum groups \cite{Hubery,Schiffman}.
Then we explain how to groupoidify a Hall algebra.

We start by fixing a finite field $\F_q$ and a directed graph $D$, 
which might look like this:
\[ \xymatrix@=10pt{
 &&&&& \bullet  \\
 \bullet \ar@(ul,dl)[] \ar@/^1pc/[rr]
 && \bullet \ar@/^1pc/[ll] \ar@/^1pc/[rr] \ar@/_1pc/[rr] \ar[rr]
 && \bullet \ar[ur] \ar[dr] \\
 &&&&& \bullet  
 }
\]
We shall call the category $Q$ freely generated by $D$ a \textbf{quiver}.
The objects of $Q$ are the vertices of $D$, while the morphisms are
edge paths, with paths of length zero serving as identity morphisms.

By a \textbf{representation} of the quiver $Q$ we mean a functor 
\[   R \maps Q \to \FinVect_q, \]
where $\FinVect_q$ is the category of finite-dimensional vector spaces
over $\F_q$.  Such a representation simply assigns a vector space
$R(d) \in \FinVect_q$ to each vertex of $D$ and a linear operator
$R(e) \maps R(d) \to R(d')$ to each edge $e$ from $d$ to $d'$.  By a
\textbf{morphism} betwen representations of $Q$ we mean a natural
transformation between such functors.  So, a morphism $\alpha \maps R
\to S$ assigns a linear operator $\alpha_d \maps R(d) \to S(d)$ to
each vertex $d$ of $D$, in such a way that
\[
\xymatrix{
R(d) \ar[d]_{\alpha_d} \ar[r]^{R(e)} & R(d') \ar[d]^{\alpha_{d'}} \\
S(d) \ar[r]_{S(d)} & S(d')
}
\]
commutes for any edge $e$ from $d$ to $d'$.  There is a category
$\Rep(Q)$ where the objects are representations of $Q$ and the
morphisms are as above.  This is an abelian category, so we can speak
of indecomposable objects, short exact sequences, etc.\ in this
category.

In 1972, Gabriel \cite{Gabriel} discovered a remarkable fact.  Namely:
a quiver has finitely many isomorphism classes of indecomposable
representations if and only if its underlying graph, ignoring the
orientation of edges, is a finite disjoint union of Dynkin diagrams of
type $A, D$ or $E$.  These are called {\bf simply laced} Dynkin
diagrams.

Henceforth, for simplicity, we assume the underlying graph of our
quiver $Q$ is a simply laced Dynkin diagram when we ignore the
orientations of its edges.  Let $X$ be the underlying groupoid of
$\Rep(Q)$: that is, the groupoid with representations of $Q$ as
objects and \textit{isomorphisms} between these as morphisms.  We will
use this groupoid to construct the Hall algebra of $Q$.

As a vector space, the Hall algebra is just $\R[\u{X}]$.
Remember from Section \ref{homology} that this is the vector space
whose basis consists of isomorphism classes of objects in $X$.  In
fancier language, it is the zeroth homology of $X$.  So, we should use
the homology approach to degroupoidification, instead of the
cohomology approach used in our examples so far.

We now focus our attention on the Hall algebra product.  Given three quiver
representations $M,N,$ and $E$, we define:
\[\mathcal{P}_{MN}^E=
\{(f,g): 
0\to N \stackrel{f}{\rightarrow} E \stackrel{g}{\rightarrow} M \to 0 
\textrm{\; is exact} \}. \]
The Hall algebra product counts these exact sequences, but with a
subtle `correction factor':
\[[M] \cdot [N] =\sum_{E \in \u{X}} 
\frac{|\mathcal{P}_{MN}^E|}{|\Aut(M)| \, |\Aut(N)|}\, [E] \,.\]
All the cardinalities in this formula are ordinary \textit{set}
cardinalities.  

Somewhat suprisingly, the above product is associative.  In fact,
Ringel \cite{Ringel} showed that the resulting algebra is isomorphic
to the positive part $U_q^+ \g$ of the quantum group corresponding to
our simply laced Dynkin diagram!  So, roughly speaking, the Hall algebra
of a simply laced quiver is `half of a quantum group'.

Since the Hall algebra product can be seen as a linear operator
\[  
\begin{array}{ccc} 
 \R[\u{X}] \tensor \R[\u{X}] &\to& \R[\u{X}] \\
         a \tensor b       & \mapsto & a\cdot b 
\end{array}
\]
it is natural to seek a span of groupoids 
\[
\xymatrix@!C{
& {\rm ???}\ar[dl]_{q} \ar[dr]^{p} & \\
X & & X\times X
}
\]
that gives this operator.  Indeed, there is a very natural span that
gives this product.  This will allow us to groupoidify the algebra
$U_q^+ \g$.  

We start by defining a groupoid $\SES(Q)$ to serve as the apex of this
span.  An object of $\SES(Q)$ is a short exact sequence in $\Rep(Q)$,
and a morphism from
\[ 0\to N \stackrel{f}{\to} E \stackrel{g}{\to} M\to 0 \]
to
\[ 0\to N' \stackrel{f'}{\to} E' \stackrel{g'}{\to} M'\to 0 \]
is a commutative diagram
\[
\xymatrix{
0 \ar[r] & 
N \ar[r]^{f} \ar[d]^{\alpha} & 
E \ar[r]^{g} \ar[d]^{\beta} & 
M \ar[r] \ar[d]^{\gamma} & 0 \\
0 \ar[r] & N' \ar[r]^{f'} & E' \ar[r]^{g'} & M' \ar[r] & 0 \\
}
\]
where $\alpha,\beta,$ and $\gamma$ are isomorphisms of
quiver representations.

Next, we define the span
\[
\xymatrix@!C{
& \SES(X)\ar[dl]_{q} \ar[dr]^{p} & \\
X & & X\times X
}
\]
where $p$ and $q$ are given on objects by
\[
\begin{array}{ccl}
  p(0\to N \stackrel{f}{\to} E \stackrel{g}{\to} M\to 0) &=& (M,N) \\
  q(0\to N \stackrel{f}{\to} E \stackrel{g}{\to} M\to 0) &=& E 
\end{array}
\]
and defined in the natural way on morphisms.  This span captures the
idea behind the standard Hall algebra multiplication.  Given two
quiver representations $M$ and $N$, this span relates them to every 
representation $E$ that is an extension of $M$ by $N$.

Before we degroupoidify this span, we need to decide on a convention.
It turns out that the correct choice is $\alpha$-degroupoidification
with $\alpha = 1$, as described in Section \ref{homology}.  Recall
from Proposition \ref{homological_alpha_degroupoidification} that a
span of finite type
\[\xymatrix{
 & S\ar[dl]_q\ar[dr]^p & \\
 Y & & X \\
}\]
yields an operator 
\[ \utilde{S_1}\maps \R[\u{X}] \to \R[\u{Y}]  \]
given by:
\[
\utilde{S_1} [x] =
\sum_{[y] \in \u{Y}} \;\,
\sum_{[s]\in\u{p^{-1}(x)}\bigcap \u{q^{-1}(y)}}
\frac{|\Aut(y)|}{|\Aut(s)|} \,\, [y]   \, .
\]
We can rewrite this using groupoid cardinality as follows:
\[
\utilde{S_1} [x] =
\sum_{[y] \in \u{Y}} \;\,
|\Aut(y)| \,\, |(p \times q)^{-1}(x,y)| \,\, [y]   \, .
\]
Applying this procedure to the span with $\SES(Q)$ as its apex, we get
an operator
\[  m \maps \R[\u{X}] \tensor \R[\u{X}] \to \R[\u{X}] \]
with
\[ m([M] \tensor [N]) 
= \sum_{E\in\mathcal{P}^E_{MN}}
|\Aut(E)| \, |(p\times q)^{-1}(M,N,E)|  \, \, [E] .  \]
We wish to show this matches the Hall algebra product $[M] \cdot [N]$.

For this, we must make a few observations. First, we note that the
group $\Aut(N) \times \Aut(E) \times \Aut(M)$ acts on the set
$\mathcal{P}^E_{MN}$. This action is not necessarily free, but this is
just the sort of situation groupoid cardinality is designed to handle.
Taking the weak quotient, we obtain a groupoid equivalent to the
groupoid where objects are short exact sequences of the form $0\to
N\to E\to M\to 0$ and morphisms are isomorphisms of short exact
sequences.  So, the weak quotient is equivalent to the groupoid
$(p\times q)^{-1}(M,N,E)$.  Remembering that groupoid cardinality is
preserved under equivalence, we see:
\[
\begin{array}{rcl}
|(p\times q)^{-1}(M,N,E)| &=& 
|\mathcal{P}^E_{MN}/\!/(\Aut(N)\times\Aut(E)\times\Aut(M))| \\  \\
&=& 
\displaystyle{\frac{|\mathcal{P}^E_{MN}|}
{|\Aut(N)| \, |\Aut(E)| \, |\Aut(M)|} . }
\end{array}
\]
So, we obtain
\[ m([M] \tensor [N]) 
= \sum_{E\in\mathcal{P}^E_{MN}}
\frac{|\mathcal{P}^E_{MN}|} {|\Aut(M)| \, |\Aut(N)|}  \,\, [E] .  \]
which is precisely the Hall algebra product $[M] \cdot [N]$.

We can define a coproduct on $\R[\u{X}]$ using the the adjoint of the
span that gives the product.  Unfortunately this coproduct does not
make the Hall algebra into a bialgebra (and thus not a Hopf algebra).
Ringel discovered how to fix this problem by `twisting' the product
and coproduct \cite{Ringel2}.  The resulting twisted Hall algebra is
isomorphic \textit{as a Hopf algebra} to $U_q^+ \g$.  This adjustment
also removes the dependency on the direction of the arrows in our
original directed graph.  We hope to groupoidify this construction in
future work.

\section{Degroupoidifying a Tame Span}
\label{processes}

In Section \ref{degroupoidification} we described a process for
turning a tame span of groupoids into a linear operator.  In this
section we show this process is well-defined.  The calculations
in the proof yield an explicit criterion for when a span is tame.
They also give an explicit formula for the the operator coming from a
tame span.  As part of our work, we also show that equivalent spans
give the same operator.

\subsection{Tame Spans Give Operators}

To prove that a tame span gives a well-defined operator, we begin with
three lemmas that are of some interest in themselves.  We postpone to
Appendix \ref{appendix} some well-known facts about groupoids that do
not involve the concept of degroupoidification.  This Appendix also
recalls the familiar concept of `equivalence' of groupoids, which
serves as a basis for this:

\begin{definition}
Two groupoids over a fixed groupoid $X$, say $v \maps \Psi \to X$
and $w \maps \Phi \to X$, are {\bf equivalent} as groupoids 
over $X$ if there is an equivalence $F \maps \Psi \to \Phi$ 
such that this diagram
\[
\xymatrix{
\Psi \ar[rr]^{F} \ar[dr]_{p} && \Phi \ar[dl]^{q} \\
&X&
}
\]
commutes up to natural isomorphism.
\end{definition}

\begin{lemma}\label{vectorswelldefined}
Let $v \maps \Psi \to X$ and $w \maps \Phi \to X$ be equivalent
groupoids over $X$.  If either one is tame, then both are tame,
and $\utilde{\Psi} = \utilde{\Phi}$.
\end{lemma}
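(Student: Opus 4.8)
The plan is to prove this by analyzing how an equivalence of groupoids over $X$ interacts with the full inverse image construction. Recall that $\utilde{\Psi}([x]) = |v^{-1}(x)|$, so it suffices to show two things: first, that tameness of $v^{-1}(x)$ for all $x$ is preserved under the equivalence, and second, that the groupoid cardinalities $|v^{-1}(x)|$ and $|w^{-1}(x)|$ agree for every object $x \in X$. Since $\utilde{\Psi}$ and $\utilde{\Phi}$ are functions on $\u{X}$ defined objectwise by these cardinalities, establishing the second point for each $x$ immediately gives $\utilde{\Psi} = \utilde{\Phi}$, and combined with the first point it also yields the tameness claim.

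The central step is to show that the equivalence $F \maps \Psi \to \Phi$ restricts to an equivalence of groupoids $v^{-1}(x) \simeq w^{-1}(x)$ for each fixed $x \in X$. Here I would use the specified natural isomorphism $\eta \maps w F \To v$ witnessing that the triangle commutes up to natural isomorphism. If $a \in \Psi$ is an object of $v^{-1}(x)$, meaning $v(a) \cong x$, then the component $\eta_a \maps w(F(a)) \to v(a)$ is an isomorphism in $X$, so $w(F(a)) \cong v(a) \cong x$, which shows $F(a)$ lies in $w^{-1}(x)$. Thus $F$ carries objects of $v^{-1}(x)$ to objects of $w^{-1}(x)$. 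Since a morphism in $v^{-1}(x)$ is just any morphism of $\Psi$ between such objects, and likewise for $w^{-1}(x)$, the restriction of $F$ is a well-defined functor $v^{-1}(x) \to w^{-1}(x)$. I would then check it is full, faithful, and essentially surjective: fullness and faithfulness are inherited directly from $F$ (the hom-sets in the full inverse images are the full hom-sets of $\Psi$ and $\Phi$), and essential surjectivity follows because $F$ is essentially surjective onto $\Phi$ and the natural isomorphism $\eta$ guarantees that any object $b$ with $w(b) \cong x$ is isomorphic to some $F(a)$ with $a$ automatically landing in $v^{-1}(x)$.

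With this restricted equivalence in hand, the conclusion follows from Lemma \ref{EQUIVGRPD}, which asserts that equivalent groupoids have equal cardinality (and in particular that one is tame precisely when the other is). Applying it to $v^{-1}(x) \simeq w^{-1}(x)$ gives $|v^{-1}(x)| = |w^{-1}(x)|$, hence $\utilde{\Psi}([x]) = \utilde{\Phi}([x])$ for all $[x]$, and tameness of $v$ transfers to tameness of $w$ and conversely.

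The main obstacle I anticipate is the bookkeeping around the ``up to natural isomorphism'' weakening: one must be careful that the natural isomorphism $\eta$ is used correctly both to land objects in the right inverse image and to establish essential surjectivity of the restricted functor, rather than assuming the triangle commutes on the nose. A secondary subtlety is verifying essential surjectivity cleanly—given $b \in w^{-1}(x)$, one obtains $a \in \Psi$ and an isomorphism $F(a) \cong b$ in $\Phi$ from the essential surjectivity of $F$, and then must confirm that this forces $v(a) \cong x$ via $\eta_a$ so that $a$ genuinely lies in $v^{-1}(x)$ and the witnessing isomorphism is a morphism of $w^{-1}(x)$. Once these compatibilities are checked, everything else reduces to citing the already-established invariance of cardinality under equivalence.
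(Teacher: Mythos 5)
Your proposal is correct and takes essentially the same route as the paper: the paper's proof simply cites Lemma \ref{ESSENTIALPULLBACK} (an equivalence over a base restricts to an equivalence between full inverse images) together with Lemma \ref{EQUIVGRPD} (equivalent groupoids have equal cardinality), and your argument amounts to proving the former inline---including the same careful use of the natural isomorphism for landing objects in the right fiber and for essential surjectivity---before applying the latter.
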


\begin{proof}
This follows directly from Lemmas \ref{EQUIVGRPD} and 
\ref{ESSENTIALPULLBACK} in Appendix \ref{appendix}.
\end{proof}

\begin{lemma}
\label{addvectors}
Given tame groupoids $\Phi$ and $\Psi$ over $X$,
\[\utilde{\Phi + \Psi} = \utilde{\Phi} + \utilde{\Psi}.\]
More generally, given any collection of tame groupoids $\Psi_i$ over $X$,
the coproduct $\sum_i \Psi_i$ is naturally a 
groupoid over $X$, and if it is tame, then
\[\utilde{\sum_i \Psi_i} = \sum_i \utilde{\Psi}_i \]
where the sum on the right hand side converges pointwise as
a function on $\u{X}$.
\end{lemma}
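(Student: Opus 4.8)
The plan is to reduce the entire statement to one structural fact—that the full inverse image construction commutes with coproducts—combined with the additivity of groupoid cardinality over disjoint unions. Everything else is bookkeeping.

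First I would unwind the coproduct groupoid over $X$. An object of $\sum_i \Psi_i$ lies in exactly one summand $\Psi_i$, and, crucially, there are no morphisms between objects of distinct summands; the structure functor $v \maps \sum_i \Psi_i \to X$ restricts to $v_i$ on each $\Psi_i$. Fixing an object $x \in X$, an object of the full inverse image $v^{-1}(x)$ is therefore an object $a \in \Psi_i$ (for some $i$) with $v_i(a) \iso x$, and its morphisms are precisely those of the ambient summand. This yields a canonical isomorphism of groupoids
\[ \left(\sum_i \Psi_i\right)^{-1}(x) \;\iso\; \sum_i \Psi_i^{-1}(x). \]

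Next I would record that groupoid cardinality is additive over coproducts: for any family $A_i$, one has $|\sum_i A_i| = \sum_i |A_i|$. This is immediate from the definition of cardinality, since the set of isomorphism classes of $\sum_i A_i$ is the disjoint union of the sets $\u{A_i}$, and the automorphism group of an object of $A_i$ is computed inside $A_i$; hence the defining sum $\sum_{[a]} 1/|\Aut(a)|$ for the coproduct is just the concatenation of those for the summands. Because every term $1/|\Aut(a)|$ is nonnegative, no conditional-convergence issues arise: the total is a supremum over finite partial sums and is insensitive to rearrangement. Combining the two facts evaluates the claim pointwise: for each $[x] \in \u{X}$, using Definition \ref{degroupoidification_of_vectors},
\[ \utilde{\sum_i \Psi_i}([x]) = \left|\left(\sum_i \Psi_i\right)^{-1}(x)\right| = \sum_i |\Psi_i^{-1}(x)| = \sum_i \utilde{\Psi_i}([x]). \]
The two-summand assertion $\utilde{\Phi+\Psi}=\utilde{\Phi}+\utilde{\Psi}$ is then the special case of two summands, where the sum is plainly finite. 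In general, the hypothesis that $\sum_i \Psi_i$ is tame over $X$ says exactly that $\bigl|(\sum_i \Psi_i)^{-1}(x)\bigr| < \infty$ for every $x$, so the series $\sum_i \utilde{\Psi_i}([x])$ converges for each fixed $[x]$—that is, pointwise on $\u{X}$, which is the asserted mode of convergence.

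As for difficulty: there is no deep obstacle here, and the only point demanding real care is the passage through the full inverse image of the coproduct. One must check that $v^{-1}(x)$ picks up no spurious morphisms between different summands—this is precisely why it matters that the coproduct is a \emph{disjoint} union of groupoids—and then that the decomposition is functorial enough to respect composition of morphisms within each summand. Invoking Lemma \ref{vectorswelldefined} lets me upgrade the canonical isomorphism above to an equivalence wherever convenient and guarantees that $\utilde{\cdot}$ is well defined independent of these identifications.
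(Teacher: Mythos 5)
Your proof is correct and follows essentially the same route as the paper's: the paper's own (two-sentence) argument consists precisely of your two key facts, namely that the full inverse image of $x \in X$ in the coproduct $\sum_i \Psi_i$ is the coproduct of the full inverse images in the summands, and that groupoid cardinality is additive under coproduct. Your write-up simply makes explicit the bookkeeping (no cross-summand morphisms, nonnegativity handling convergence) that the paper leaves implicit.
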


\begin{proof}
The full inverse image of any object $x \in X$ in the coproduct
$\sum_i \Psi_i$ is the coproduct of its fulll inverse images in
each groupoid $\Psi_i$.  Since groupoid cardinality is additive under
coproduct, the result follows.
\end{proof}

\begin{lemma}
\label{linearity}
Given a span of groupoids 
\[
\xymatrix{
& S\ar[dl]_{q} \ar[dr]^{p} & \\
Y & & X
}
\]
we have
\begin{enumerate}
\item $S(\sum_i \Psi_i)\simeq \sum_i S\Psi_i$
\item $S(\Lambda\times \Psi)\simeq \Lambda\times S\Psi$
\end{enumerate}
whenever $v_i \maps \Psi_i \to X$ are groupoids over $X$,
$v \maps \Psi \to X$ is a groupoid over $X$, and 
$\Lambda$ is a groupoid.
\end{lemma}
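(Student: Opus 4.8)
The plan is to unwind the explicit construction of the weak pullback given above and, in each case, exhibit a strict isomorphism of groupoids, which is in particular an equivalence, and which moreover commutes with the structure maps to $Y$. Recall that for a groupoid over $X$, say $v \maps \Psi \to X$, an object of the weak pullback $S\Psi$ is a triple $(s,a,\alpha)$ consisting of an object $s$ of $S$, an object $a$ of $\Psi$, and an isomorphism $\alpha \maps p(s) \to v(a)$ in $X$; a morphism $(s,a,\alpha) \to (s',a',\alpha')$ is a pair $(f,g)$ with $f \maps s \to s'$ in $S$ and $g \maps a \to a'$ in $\Psi$ such that $\alpha' \circ p(f) = v(g) \circ \alpha$. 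The map $q\pi_S \maps S\Psi \to Y$ reads off $q(s)$.

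For part (1), recall that in the coproduct $\sum_i \Psi_i$ every object lies in a unique summand $\Psi_i$, the structure map to $X$ restricts to $v_i$ on that summand, and every morphism has its source and target in a single summand. Hence a triple $(s,a,\alpha)$ in $S(\sum_i \Psi_i)$ is exactly a triple in which $a$ lies in some unique $\Psi_i$ with $\alpha \maps p(s) \to v_i(a)$---that is, an object of $S\Psi_i$, placed in the $i$th summand of $\sum_i S\Psi_i$---and the same bookkeeping matches morphisms summand by summand. I would package this as a functor $S(\sum_i \Psi_i) \to \sum_i S\Psi_i$ with an evident inverse, giving an isomorphism of groupoids; since both sides send an object to $q(s) \in Y$, it is an isomorphism of groupoids over $Y$.

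For part (2), the groupoid $\Lambda \times \Psi$ is a groupoid over $X$ via $v\pi_2$, so an object of $S(\Lambda \times \Psi)$ is $(s,(\lambda,a),\alpha)$ with $\alpha \maps p(s) \to v(a)$, while an object of $\Lambda \times S\Psi$ is $(\lambda,(s,a,\alpha))$ with the identical isomorphism datum $\alpha \maps p(s) \to v(a)$. The ``shuffle'' assignment $(s,(\lambda,a),\alpha) \mapsto (\lambda,(s,a,\alpha))$ is a bijection on objects. On morphisms, a map on the left is a pair consisting of $f \maps s \to s'$ in $S$ and $(h,g) \maps (\lambda,a) \to (\lambda',a')$ in $\Lambda \times \Psi$, subject to the weak-pullback condition $\alpha' \circ p(f) = (v\pi_2)(h,g) \circ \alpha = v(g) \circ \alpha$; this constrains only $f,g,\alpha,\alpha'$ and leaves $h$ free, so it transports verbatim to the corresponding morphism $(h,(f,g))$ on the right, whose $S\Psi$-component obeys the same equation. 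I would check functoriality of the shuffle and of its inverse, obtaining an isomorphism of groupoids, again compatible with the maps $q\pi_S$ to $Y$.

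Neither part presents a genuine obstacle: the whole content is careful bookkeeping with the triples and the commuting squares. The nearest thing to a hard point is the observation in part (2) that the weak-pullback condition on morphisms genuinely ignores the $\Lambda$-component $h$, so that morphisms shuffle as freely as objects; once this is confirmed, the defining equations on the two sides are literally the same. I would finally remark on the conceptual reason behind both identities: the operation $\Psi \mapsto S\Psi$ is defined by a weak pullback, which as a bilimit preserves coproducts and copowers (products with a fixed groupoid) in the variable $\Psi$, and the verifications above simply make this concrete.
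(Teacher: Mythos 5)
Your proposal is correct and follows essentially the same route as the paper: both proofs unwind the explicit description of weak pullback objects and morphisms and exhibit the evident re-indexing functor for part (1) and the entry-shuffling functor for part (2), the only cosmetic difference being that you orient the part-(1) functor as $S(\sum_i \Psi_i) \to \sum_i S\Psi_i$ while the paper uses its inverse. If anything, your write-up is more complete than the paper's sketch, since you note that these functors are strict isomorphisms and verify compatibility with the projections to $Y$, which is the property actually needed when the lemma is invoked.
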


\begin{proof}
To prove 1, we need to describe a functor
\[ 
F\maps \sum_i S \Psi_i \to S(\sum_i \Psi_i)
\]
that will provide our equivalence.  For this, we simply need to
describe for each $i$ a functor $F_i \maps S \Psi_i \to S(\sum_i
\Psi_i)$.  An object in $S \Psi_i$ is a triple $(s,z,\alpha)$ where $s
\in S$, $z \in \Psi_i$ and $\alpha \maps p(s) \to v_i(z)$.  $F_i$
simply sends this triple to the same triple regarded as an object of
$S(\sum_i \Psi_i)$.  One can check that $F$ extends to a functor and
that this functor extends to an equivalence of groupoids over $S$.

To prove 2, we need to describe a functor $F \maps S(\Lambda\times
\Phi) \to \Lambda\times S\Phi$.  This functor simply re-orders the
entries in the quadruples which define the objects in each groupoid.
One can check that this functor extends to an equivalence of groupoids
over $X$.
\end{proof}

\noindent
Finally we need the following lemma, which simplifies the computation
of groupoid cardinality:

\begin{lemma}\label{ALTCARD}
If $X$ is a tame groupoid with finitely many objects in each isomorphism
class, then 
\[ |X| 
= \sum_{x \in X} \frac{1}{|\Mor(x,-)|} \]
where $\Mor(x,-) = \bigcup_{y\in X}\hom(x,y)$ is
the set of morphisms whose source is the object $x \in X$.  
\end{lemma}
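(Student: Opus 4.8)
The plan is to regroup the right-hand sum according to isomorphism classes and show that it collapses, class by class, to the defining expression for $|X|$, namely $\sum_{[x] \in \u{X}} 1/|\Aut(x)|$. The whole argument is a rearrangement of a series of nonnegative terms together with one elementary fact about hom-sets in a groupoid.

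First I would fix an isomorphism class $[x_0] \in \u{X}$ with representative $x_0$ and analyze $|\Mor(x,-)|$ for an object $x$ lying in this class. Since $X$ is a groupoid, every morphism out of $x$ is an isomorphism, so its target lies in $[x_0]$; moreover the hom-sets $\hom(x,y)$ for distinct targets $y$ are disjoint, so
\[ \Mor(x,-) = \coprod_{y \in [x_0]} \hom(x,y), \]
where the coproduct runs over the objects $y$ of $X$ in the class $[x_0]$. By hypothesis there are only finitely many such objects; call this number $N_{[x_0]}$.

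The key computation is that for any two objects $x,y$ in the same class, choosing an isomorphism $\phi \maps x \to y$ yields a bijection $\Aut(x) \to \hom(x,y)$, $g \mapsto \phi g$, whence $|\hom(x,y)| = |\Aut(x)|$; local finiteness guarantees these are finite. Therefore $|\Mor(x,-)| = N_{[x_0]}\,|\Aut(x)|$, and since isomorphic objects have isomorphic automorphism groups, $|\Aut(x)| = |\Aut(x_0)|$ for every $x$ in the class. Regrouping the right-hand sum then gives
\[ \sum_{x \in X} \frac{1}{|\Mor(x,-)|} = \sum_{[x_0] \in \u{X}} \; \sum_{x \in [x_0]} \frac{1}{N_{[x_0]}\,|\Aut(x_0)|}. \]
The inner sum has exactly $N_{[x_0]}$ identical terms, so it collapses to $1/|\Aut(x_0)|$, and summing over classes recovers $\sum_{[x_0] \in \u{X}} 1/|\Aut(x_0)| = |X|$.

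The only point requiring care — and it is not a serious obstacle — is justifying the regrouping of the sum. Because $X$ is tame we have $|X| < \infty$, and all terms are nonnegative, so the series converges unconditionally and may be reorganized by isomorphism class without affecting its value. The two finiteness hypotheses, local finiteness and finitely many objects per isomorphism class, are precisely what guarantee that each $|\Mor(x,-)| = N_{[x_0]}\,|\Aut(x_0)|$ is a finite nonzero integer, so that every reciprocal on the right-hand side is well-defined.
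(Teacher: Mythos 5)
Your proof is correct and is essentially the paper's own argument: both hinge on the count $|\Mor(x,-)| = |[x]|\,|\Aut(x)|$, obtained by choosing an isomorphism from $x$ to each object of its class and composing (your family of bijections $\Aut(x)\to\hom(x,y)$ is just the paper's single bijection $[x]\times\Aut(x)\to\Mor(x,-)$ taken apart), followed by regrouping the sum over objects by isomorphism class. The only cosmetic differences are that you run the chain of equalities in the opposite direction and explicitly justify the rearrangement of the nonnegative series, which the paper leaves implicit.
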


\begin{proof}
We check the following equalities:
\[  \sum_{[x] \in \u{X}} \frac{1}{|\Aut(x)|} 
= \sum_{[x] \in \u{X}} \frac{|[x]|}{|\Mor(x,-)|} 
= \sum_{x \in X} \frac{1}{|\Mor(x,-)|} .\]
Here $[x]$ is the set of objects isomorphic to $x$, and $|[x]|$ is the
ordinary cardinality of this set.  To check the above equations, we first
choose an isomorphism $\gamma_y \maps x \to y$ for each object $y$ isomorphic
to $x$.  This gives a bijection from $[x] \times 
\Aut(x)$ to $\Mor(x,-)$ that takes $(y, f \maps x \to x)$ to
$\gamma_y f \maps x \to y$.  Thus
\[         |[x]| \, |\Aut(x)| = |\Mor(x,-)| , \]
and the first equality follows.  We also get a bijection between
$\Mor(y,-)$ and $\Mor(x,-)$ that takes $f \maps y \to z$ to
$f\gamma_y \maps x \to z$.  Thus, $|\Mor(y,-)| = |\Mor(x,-)|$ whenever
$y$ is isomorphic to $x$.  The second equation follows from this.
\end{proof}

Now we are ready to prove the main theorem of this section:

\begin{thm}\label{PROCESS1}
Given a tame span of groupoids
\[
\xymatrix{
& S \ar[dl]_{q} \ar[dr]^{p} & \\
Y && X
}
\]
there exists a unique linear operator 
$\utilde{S} \maps \R^{\u{X}} \to \R^{\u{Y}}$
such that $\utilde{S}\utilde{\Psi} = \utilde{S\Psi}$ for 
any vector $\utilde{\Psi}$ obtained from a tame groupoid $\Psi$
over $X$.
\end{thm}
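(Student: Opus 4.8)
The plan is to discover the operator by computing $\utilde{S\Psi}$ directly, promote the resulting formula to a definition, settle convergence and linearity, verify the defining relation, and finally obtain uniqueness from a spanning argument.

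First I would fix a tame groupoid $v \maps \Psi \to X$, form the weak pullback $S\Psi$ regarded as a groupoid over $Y$ via $q\pi_S$, and compute $\utilde{S\Psi}([y]) = |(q\pi_S)^{-1}(y)|$. Unpacking the weak pullback, an object of this fiber is a triple $(s,a,\alpha)$ with $s \in S$ satisfying $q(s) \cong y$, an object $a \in \Psi$, and an isomorphism $\alpha \maps p(s) \to v(a)$. I would organize the sum defining the groupoid cardinality according to the class $[s] \in \u{q^{-1}(y)}$ of the middle leg and the class $[x] = [p(s)] = [v(a)] \in \u{X}$: for fixed classes, the choices of $\alpha$ form a torsor under $\Aut(x)$ while $a$ ranges over $v^{-1}(x)$, and the automorphisms of $s$ and of $a$ act on $(s,a,\alpha)$. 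Tracking this action cleanly—most naturally by counting morphisms out of each object via Lemma \ref{ALTCARD}—should yield
\[
\utilde{S\Psi}([y]) = \sum_{[x]\in\u{X}}\;\sum_{[s]\in\u{p^{-1}(x)}\cap\u{q^{-1}(y)}} \frac{|\Aut(x)|}{|\Aut(s)|}\,\utilde{\Psi}([x]).
\]
This identity is the heart of the theorem, and getting the automorphism bookkeeping right—verifying that the $\Aut(x)$-torsor of choices of $\alpha$, after quotienting by the action of $\Aut(a)$, combines with $\utilde{\Psi}([x]) = |v^{-1}(x)|$ to produce exactly the factor $|\Aut(x)|/|\Aut(s)|$—is the step I expect to be the main obstacle.

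With the formula in hand, I would define $\utilde{S}$ on an arbitrary $\psi \in \R^{\u{X}}$ by the right-hand side of Equation \ref{operator_formula}, with $\psi([x])$ in place of $\utilde{\Psi}([x])$. Two points then need attention. For well-definedness, the double sum must converge for each fixed $[y]$ and every $\psi$; this is precisely what the tameness hypothesis secures, and I would extract the exact convergence condition, together with finiteness of the inner sum, from the explicit tameness criterion of Theorem \ref{matrix1}. Linearity in $\psi$ is then immediate from the formula. The defining relation $\utilde{S}\utilde{\Psi} = \utilde{S\Psi}$ for tame $\Psi$ holds by construction, being exactly the computation above; alternatively one could verify it first on the generating vectors introduced below and extend using Lemmas \ref{addvectors} and \ref{linearity} together with the scalar-multiplication property.

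For uniqueness it suffices to show that the vectors $\utilde{\Psi}$, as $\Psi$ ranges over tame groupoids over $X$, span all of $\R^{\u{X}}$; then any two linear operators satisfying the defining relation agree on a spanning set and hence coincide. The key sublemma is that every nonnegative real is the cardinality of some tame groupoid, since any $r \ge 0$ can be written as a sum of reciprocals of positive integers and realized by a disjoint union of one-object groupoids with cyclic automorphism groups. Given any $\psi \in \R^{\u{X}}$, I would write $\psi = \psi^+ - \psi^-$ with $\psi^\pm \ge 0$ and build tame groupoids $\Psi^\pm$ over $X$ whose full inverse image over each $x$ has cardinality $\psi^\pm([x])$, so that $\utilde{\Psi^\pm} = \psi^\pm$ and $\psi = \utilde{\Psi^+} - \utilde{\Psi^-}$. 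This exhibits $\psi$ in the span and completes the argument.
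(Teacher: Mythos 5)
Your proposal is correct, but it is organized differently from the paper's proof. The paper defines $\utilde{S}$ implicitly---by its values $\utilde{S}\utilde{\Psi} = \utilde{S\Psi}$ on groupoid-vectors, extended by linearity---so the entire burden falls on well-definedness: assuming $\sum_i \alpha_i \utilde{\Psi_i} = 0$, one must show $\sum_i \alpha_i \utilde{S\Psi_i} = 0$. To do this the paper first reduces, via equivalence-invariance and the coproduct lemmas (Lemmas \ref{vectorswelldefined}, \ref{addvectors}, \ref{linearity}), to the case where all groupoids are skeletal and each $\Psi_i$ has a single object, and only then runs the automorphism bookkeeping with Lemma \ref{ALTCARD}. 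You instead derive the explicit matrix formula for a general tame $\Psi$, promote it to a definition of $\utilde{S}$ on all of $\R^{\u{X}}$, and thereby make well-definedness automatic, shifting the work to convergence; in effect you prove Theorem \ref{matrix1} first and obtain Theorem \ref{PROCESS1} as a corollary, which is the reverse of the paper's order. Your uniqueness argument matches the paper's, except that you supply the realizability construction (unit-fraction expansions realized by disjoint unions of one-object groupoids with cyclic automorphism groups) that the paper merely asserts. Two details would need attention in a full write-up. First, Lemma \ref{ALTCARD} requires finitely many objects in each isomorphism class, which neither a tame groupoid over $X$ nor the weak pullback need satisfy in general; so your computation, like the paper's, must begin by replacing $S$, $X$, $Y$, $\Psi$ with equivalent skeletal groupoids (justified by Lemmas \ref{EQUIVGRPD}, \ref{ESSENTIALPULLBACK}, \ref{skeletal}), after which $|\Mor((s,a,\alpha),-)| = |\Aut(s)|\,|\Aut(a)|$ and the grouping by $[x]$ and $[s]$ give exactly your formula, read as an identity in $[0,\infty]$ with rearrangements justified by nonnegativity. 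Second, you cannot literally cite Theorem \ref{matrix1} for the convergence conditions, since in the paper that theorem is deduced from the computations inside the proof of Theorem \ref{PROCESS1}; but this is not a substantive circularity, because your own ingredients---the formula on groupoid-vectors plus realizability of arbitrary nonnegative functions---yield the two finiteness conditions directly: if for some fixed $[y]$ a coefficient were infinite, or infinitely many coefficients were nonzero, one could choose a nonnegative $\psi = \utilde{\Psi}$ making $\utilde{S\Psi}([y])$ diverge, contradicting tameness of the span.
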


\begin{proof}
It is easy to see that these conditions uniquely determine
$\utilde{S}$.  Suppose $\psi \maps \u{X} \to \R$ is
any nonnegative function.  Then we can find a groupoid $\Psi$
over $X$ such that $\utilde{\Psi} = \psi$.  So, $\utilde{S}$
is determined on nonnegative functions by the condition that
$\utilde{S}\utilde{\Psi} = \utilde{S\Psi}$.  Since every
function is a difference of two nonnegative functions and 
$\utilde{S}$ is linear, this uniquely determines $\utilde{S}$.

The real work is proving that $\utilde{S}$ is well-defined.  
For this, assume we have a collection $\lbrace v_i \maps \Psi_i \to X
\rbrace_{i\in I}$ of groupoids over $X$ and real numbers $\lbrace
\alpha_i \in \R \rbrace_{i\in I}$ such that
\begin{equation}
\label{assumption1}
 \sum_i \alpha_i \, \utilde{\Psi_i} = 0. 
\end{equation}
We need to show that
\begin{equation}
\label{fact0}
 \sum_i\alpha_i \, \utilde{S\Psi_i} = 0.
\end{equation}

We can simplify our task as follows.  First, recall that a {\bf
skeletal} groupoid is one where isomorphic objects are equal.  Every
groupoid is equivalent to a skeletal one.  Thanks to Lemmas
\ref{vectorswelldefined} and \ref{skeletal}, we may therefore assume
without loss of generality that $S$, $X$, $Y$ and all the groupoids
$\Psi_i$ are skeletal.

Second, recall that a skeletal groupoid is a coproduct of groupoids
with one object.  By Lemma \ref{addvectors}, degroupoidification
converts coproducts of groupoids over $X$ into sums of vectors.  Also,
by Lemma \ref{linearity}, the operation of taking weak pullback
distributes over coproduct.  As a result, we may assume without loss
of generality that each groupoid $\Psi_i$ has one object.  Write
$\ast_i$ for the one object of $\Psi_i$.

With these simplifying assumptions, Equation \ref{assumption1} says
that for any $x \in X$,
\begin{equation}
\label{assumption2}
 0  =  \displaystyle{\sum_{i \in I} \alpha_i \, \utilde{\Psi_i}([x])} 
    =  \displaystyle{\sum_{i \in I} \alpha_i \, |v_i^{-1} (x)|}   
    = \displaystyle{\sum_{i \in J} \frac{\alpha_i}{|\Aut(\ast_i)|}}
\end{equation}
where $J$ is the collection of $i \in I$ such that $v_i(\ast_i)$ is
isomorphic to $x$.  Since all groupoids in sight
are now skeletal, this condition implies $v_i(\ast_i) = x$.

Now, to prove Equation \ref{fact0}, we need to show that 
\[ \sum_{i \in I} \alpha_i \, \utilde{S\Psi_i}([y]) = 0\]
for any $y \in Y$.  But since the set $I$ is partitioned into sets
$J$, one for each $x \in X$, it suffices to show
\begin{equation}
\label{fact1}
\sum_{i \in J} \alpha_i \, \utilde{S\Psi_i}([y]) = 0 .
\end{equation}
for any fixed $x \in X$ and $y \in Y$.

To compute $\utilde{S\Psi_i}$, we need
to take this weak pullback:
\[
\xymatrix{
& & S\Psi_i \ar[dl]_{\pi_S}\ar[dr]^{\pi_{\Psi_i}} & \\
& S \ar[dl]_{q} \ar[dr]^{p} & & \Psi_i \ar[dl]_{v_i} \\
Y & & X &  
}
\]
We then have
\begin{equation}
\label{formula1}
    \utilde{S \Psi_i}([y]) = |(q \pi_S)^{-1}(y)| , 
\end{equation}
so to prove Equation \ref{fact1} it suffices to show
\begin{equation}
\label{fact2}
\sum_{i \in J} \alpha_i \, |(q \pi_S)^{-1}(y)| = 0 .
\end{equation}

Using the definition of `weak pullback', and taking advantage of the
fact that $\Psi_i$ has just one object, which maps down to $x$, we can
see that an object of $S \Psi_i$ consists of an object $s \in S$ with
$p(s) = x$ together with an isomorphism $\alpha \maps x \to x$.  This
object of $S\Psi_i$ lies in $(q \pi_S)^{-1}(y)$ precisely when we also
have $q(s) = y$.

So, we may briefly say that an object of $(q\pi_S)^{-1}(y)$ is a pair
$(s, \alpha)$, where $s \in S$ has $p(s) = x$, $q(s) = y$, and
$\alpha$ is an element of $\Aut(x)$.  Since $S$ is skeletal, there is
a morphism between two such pairs only if they have the same first
entry.  A morphism from $(s,\alpha)$ to $(s,\alpha')$ then consists of
a morphism $f \in \Aut(s)$ and a morphism $g \in \Aut(\ast_i)$ such
that
\[
\xymatrix{
x \ar[r]^{\alpha}\ar[d]_{p(f)} & x\ar[d]^{v_i(g)} \\
x \ar[r]_{\alpha'} & x
}
\]
commutes.  

A morphism out of $(s,\alpha)$ thus consists of an arbitrary pair $f
\in \Aut(s)$, $g \in \Aut(\ast_i)$, since these determine the target
$(s,\alpha')$.  This fact and Lemma \ref{ALTCARD} allow us to compute:
\begin{equation}
\label{formula2}
\begin{array}{ccl}
\nonumber |(q\pi_S)^{-1}(y)| & = 
\displaystyle{ 
\sum_{(s,\alpha)\in (q\pi_S)^{-1}(y)}
\frac{1}{|\Mor((s,\alpha),-)|} 
}
\\
\\
& = 
\displaystyle{
\sum_{s \in p^{-1}(y) \cap q^{-1}(y)}
\frac{|\Aut(x)|}{|\Aut(s)||\Aut(\ast_i)|} \,.
} 
\end{array} 
\end{equation}
So, to prove Equation \ref{fact2}, it suffices to show
\begin{equation}
\label{fact3}
\sum_{i \in J} \;\, \sum_{s \in p^{-1}(x) \cap q^{-1}(y) }
\frac{\alpha_i|\Aut(x)|}{|\Aut(s)||\Aut(\ast_i)|} = 0 \,.
\end{equation}
But this easily follows from Equation \ref{assumption2}.  So, the
operator $\utilde{S}$ is well defined.
\end{proof}

In Definition \ref{EQUIVALENCE} we recall the natural concept of
`equivalence' for spans of groupoids.  The next theorem says that our
process of turning spans of groupoids into linear operators sends
equivalent spans to the same operator:

\begin{thm}\label{linops_equivspans1}
Given equivalent spans
\[
\xymatrix{
& S\ar[dl]_{q_S} \ar[dr]^{p_S} &  & & T\ar[dl]_{q_T} \ar[dr]^{p_T} &\\
Y & & X & Y & & X
}
\]
the linear operators $\utilde{S}$ and $\utilde{T}$ are equal.
\end{thm}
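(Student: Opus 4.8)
The plan is to reduce the equality of the two operators to an equivalence of groupoids over $Y$, and then obtain that equivalence from the equivalence-invariance of the weak pullback. First I would note that by Theorem \ref{PROCESS1} each of $\utilde{S}$ and $\utilde{T}$ is uniquely determined by its action on vectors coming from tame groupoids over $X$: for any tame $v \maps \Psi \to X$ we have $\utilde{S}\utilde{\Psi} = \utilde{S\Psi}$ and $\utilde{T}\utilde{\Psi} = \utilde{T\Psi}$. As observed in the proof of Theorem \ref{PROCESS1}, every nonnegative function on $\u{X}$ is of the form $\utilde{\Psi}$ for some such $\Psi$, and every function is a difference of two nonnegative ones; since both operators are linear, it suffices to prove
\[ \utilde{S\Psi} = \utilde{T\Psi} \]
for every tame groupoid $\Psi$ over $X$.

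Second, by Lemma \ref{vectorswelldefined}, equivalent groupoids over $Y$ are simultaneously tame and have equal degroupoidification. Hence it is enough to exhibit an equivalence of groupoids over $Y$ between $S\Psi$ and $T\Psi$. Here the span equivalence supplies, via Definition \ref{EQUIVALENCE}, an equivalence $F \maps S \to T$ together with natural isomorphisms $p_T F \iso p_S$ and $q_T F \iso q_S$.

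Third, I would construct the desired equivalence directly. Recall an object of $S\Psi$ is a triple $(s,z,\alpha)$ with $s \in S$, $z \in \Psi$, and $\alpha \maps p_S(s) \to v(z)$ an isomorphism in $X$. Writing $\eta_s \maps p_T(F(s)) \to p_S(s)$ for the component of $p_T F \iso p_S$, I would define $\tilde{F} \maps S\Psi \to T\Psi$ on objects by $(s,z,\alpha) \mapsto (F(s), z, \alpha \circ \eta_s)$ and correspondingly on morphisms; naturality of $\eta$ guarantees that $\tilde{F}$ respects the commuting-square condition defining morphisms in the weak pullback. Essential surjectivity, fullness, and faithfulness of $\tilde{F}$ are transported from those of $F$, since $\Psi$ and the isomorphisms $\alpha$ are carried along unchanged. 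Finally, $\pi_T \tilde{F} = F \pi_S$ on objects, so the natural isomorphism $q_T F \iso q_S$ yields $q_T \pi_T \tilde{F} \iso q_S \pi_S$, which is exactly the assertion that $\tilde{F}$ is an equivalence over $Y$. The cleanest packaging of this step is to invoke the equivalence-invariance of the weak pullback, essentially Lemma \ref{ESSENTIALPULLBACK}, applied to the two cospans $S \to X \leftarrow \Psi$ and $T \to X \leftarrow \Psi$, which agree up to the equivalence $F$ over $X$.

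The main obstacle is precisely this third step: one must verify that the induced functor genuinely preserves the weak-pullback morphism condition, and that the three separate natural isomorphisms—two over $X$ for the defining legs and one over $Y$ for the left legs—fit together coherently rather than merely individually. Once the equivalence-invariance of the weak pullback is in hand, the first two steps are formal and the conclusion $\utilde{S} = \utilde{T}$ follows immediately.
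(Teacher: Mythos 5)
Your proposal is correct and takes essentially the same route as the paper: both use the span-equivalence data to produce an equivalence $S\Psi \simeq T\Psi$ of groupoids over $Y$ (the paper invokes Lemma \ref{skeletal} plus the universal property of the weak pullback, while you build the functor $(s,z,\alpha) \mapsto (F(s),z,\alpha\circ\eta_s)$ by hand---which is exactly the construction inside Lemma \ref{skeletal}, specialized to the cospans $S \to X \leftarrow \Psi$ and $T \to X \leftarrow \Psi$), and then both conclude equal vectors pointwise via Lemmas \ref{EQUIVGRPD} and \ref{ESSENTIALPULLBACK}, which you package as Lemma \ref{vectorswelldefined}. The only slip is a citation: the ``equivalence-invariance of the weak pullback'' is Lemma \ref{skeletal}, not Lemma \ref{ESSENTIALPULLBACK} (the latter restricts an equivalence to full inverse images and enters only at the final cardinality step), but since your explicit construction is self-contained this does not affect the argument.
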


\begin{proof}
Since the spans are equivalent, there is a functor providing an
equivalence of groupoids $F \maps S \to T$ along with a pair of
natural isomorphisms $\alpha \maps p_S \Rightarrow p_TF$ and 
$\beta \maps q_S \Rightarrow q_TF$.  Thus, the diagrams
\[
\xymatrix{
S \ar[dr] && \Phi \ar[dl] & & T \ar[dr] && \Phi \ar[dl] \\
& X & && & X &
}
\]
are equivalent pointwise.  It follows from Lemma \ref{skeletal} that
the weak pullbacks $S\Psi$ and $T\Psi$ are equivalent groupoids with
the equivalence given by a functor $\tilde{F} \maps S\Psi \to T\Psi$.
From the universal property of weak pullbacks, along with $F$, we
obtain a natural transformation $\gamma \maps F\pi_S \Rightarrow
\pi_T\tilde{F}$.  We then have a triangle
\[ \def\objectstyle{\scriptstyle}
  \def\labelstyle{\scriptstyle}
   \xy
   (20,20)*+{S\Psi}="1";
   (-20,20)*+{T\Psi}="2";
   (10,0)*+{S}="3";
   (-10,0)*+{T}="4";
   (0,-20)*+{Y}="5";
        {\ar_{\tilde{F}} "1";"2"};
        {\ar^{\pi_S} "1";"3"};
        {\ar_{\pi_T} "2";"4"};
        {\ar^{q_S} "3";"5"};
        {\ar_{q_T} "4";"5"};
        {\ar_{F} "3";"4"};
        {\ar@{=>}_<<{\scriptstyle \gamma} (2,11); (-2,9)};
        {\ar@{=>}_<<{\scriptstyle \beta} (2,-9); (-2,-11)};

\endxy
\]
where the composite of $\gamma$ and $\beta$ is $(q_T \cdot
\gamma)^{-1}\beta \maps q_S \pi_S \Rightarrow q_T\pi_T\tilde{F}$.
Here $\cdot$ stands for whiskering: see Definition \ref{whiskering}.

We can now apply Lemma \ref{ESSENTIALPULLBACK}.  Thus, for every $y \in
Y$, the full inverse images $(q_S\pi_S)^{-1}(y)$ and
$(q_T\pi_T)^{-1}(y)$ are equivalent.  It follows from Lemma
\ref{EQUIVGRPD} that for each $y \in Y$, the groupoid cardinalities
$|(q_S\pi_S)^{-1}(y)|$ and $|(q_T\pi_T)^{-1}(y)|$ are equal.  Thus,
the linear operators $\utilde{S}$ and $\utilde{T}$ are the same.
\end{proof}

\subsection{An Explicit Formula}
\label{explicit}

Our calculations in the proof of Theorem \ref{PROCESS1} yield an explicit
formula for the operator coming from a tame span, and a criterion for
when a span is tame:

\begin{thm}
\label{matrix1}
A span of groupoids
\[
\xymatrix{
& S\ar[dl]_{q} \ar[dr]^{p} & \\
Y & & X
}
\]
is tame if and only if:
\begin{enumerate}
\item
For any object $y\in Y$, the groupoid $p^{-1}(x)\cap q^{-1}(y)$ is
nonempty for objects $x$ in only a finite number of isomorphism classes
of $X$.
\item For every $x \in X$ and $y \in Y$, the groupoid
$p^{-1}(x)\cap q^{-1}(y)$ is tame. 
\end{enumerate}
Here $p^{-1}(x)\cap q^{-1}(y)$ is the subgroupoid of $S$ whose objects
lie in both $p^{-1}(x)$ and $q^{-1}(y)$, and whose morphisms lie in
both $p^{-1}(x)$ and $q^{-1}(y)$.

If $S$ is tame, then for any $\psi \in \R^{\u X}$ we
have
\[   
(\utilde{S} \psi)([y]) = 
\sum_{[x] \in \u{X}} \;\,
\sum_{[s]\in\u{p^{-1}(x)}\bigcap \u{q^{-1}(y)}}
\frac{|\Aut(x)|}{|\Aut(s)|} \,\, \psi([x]) \,. 
\]
\end{thm}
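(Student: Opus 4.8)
The plan is to reduce everything to the single weak-pullback cardinality computation already carried out in the proof of Theorem~\ref{PROCESS1} (culminating in Equation~\ref{formula2}), and then read off both the formula and the tameness criterion from it. First I would record the general shape of that computation. Given any groupoid $v\maps\Psi\to X$ whose fibers $v^{-1}(x)$ are tame, forming the weak pullback $S\Psi$ and taking the full inverse image over $y\in Y$ gives, by the same automorphism bookkeeping as in Equation~\ref{formula2} together with additivity of groupoid cardinality under coproduct,
\[
|(q\pi_S)^{-1}(y)| \;=\; \sum_{[x]\in\u{X}} |v^{-1}(x)|
\sum_{[s]\in\u{p^{-1}(x)}\cap \u{q^{-1}(y)}} \frac{|\Aut(x)|}{|\Aut(s)|}.
\]
To reach this cleanly I would invoke Lemma~\ref{vectorswelldefined}, Lemma~\ref{skeletal}, and Theorem~\ref{linops_equivspans1} to assume $S$, $X$, $Y$ skeletal, so that every morphism of $S$ is an automorphism and the sum over objects $s$ of $p^{-1}(x)\cap q^{-1}(y)$ agrees with the sum over isomorphism classes $[s]$; then I would decompose $\Psi$ as a coproduct of its one-object subgroupoids $1/\!/\Aut(a)$ and apply Lemma~\ref{linearity}(1) and Lemma~\ref{addvectors} to reduce to the one-object case of Equation~\ref{formula2}. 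That case shows the contribution over $y$ of a single object $a$ with $v(a)\cong x$ depends only on $|v^{-1}(x)|$ and not on the homomorphism $\Aut(a)\to\Aut(x)$, which is exactly what lets the coproduct reassemble into the displayed identity.

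For the formula itself (the second assertion), assume $S$ is tame, so that $\utilde{S}$ exists by Theorem~\ref{PROCESS1} and satisfies $\utilde{S}\,\utilde{\Psi}=\utilde{S\Psi}$. Then $(\utilde{S}\,\utilde{\Psi})([y]) = |(q\pi_S)^{-1}(y)|$ is precisely the displayed sum with $\psi=\utilde{\Psi}$. Since every nonnegative $\psi\maps\u{X}\to\R$ arises as $\utilde{\Psi}$ for some groupoid over $X$ that is tame over $X$, and since both sides of the claimed identity are linear in $\psi$, writing an arbitrary $\psi$ as a difference of nonnegative functions yields the formula for all $\psi\in\R^{\u{X}}$.

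For the criterion I would argue both directions from the same cardinality identity, which holds for any groupoid $\Psi$ over $X$ with tame fibers, regardless of whether $S$ is tame. For the direction $(\Leftarrow)$, assume (1) and (2) and let $\Psi$ be any tame groupoid over $X$: condition (1) makes the outer sum finite, since only finitely many $[x]$ give a nonempty $p^{-1}(x)\cap q^{-1}(y)$; condition (2) makes each inner sum and each $|\Aut(x)|$ finite; and $|v^{-1}(x)|<\infty$ because $\Psi$ is tame over $X$. Hence $|(q\pi_S)^{-1}(y)|<\infty$ for every $y$, so $S\Psi$ is tame and $S$ is tame. For $(\Rightarrow)$ I argue by contrapositive using test groupoids. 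If (2) fails for some $x,y$, then (as $p^{-1}(x)\cap q^{-1}(y)$ is a full subgroupoid of $S$, hence essentially small and locally finite) it must fail by having infinite cardinality; taking $\Psi=1/\!/\Aut(x)$ over the single object $x$ collapses the identity to $|(q\pi_S)^{-1}(y)|=|p^{-1}(x)\cap q^{-1}(y)|=\infty$. If (1) fails, there are infinitely many classes $[x]$ with $p^{-1}(x)\cap q^{-1}(y)$ nonempty; here I crucially use that ``tame over $X$'' only requires each fiber $v^{-1}(x)$ to be tame, not the total groupoid, so I may choose $\Psi$ with $|v^{-1}(x)|$ equal to the reciprocal of the finite positive inner sum for each such $x$. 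Then every surviving outer term equals $1$, forcing $|(q\pi_S)^{-1}(y)|=\infty$ and defeating tameness of $S$.

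The main obstacle, and the step I would treat most carefully, is this last point: the necessity of condition (1). It hinges on exploiting the asymmetric meaning of tameness for a groupoid over $X$ (fiberwise, not global) and on checking that the reciprocal-weighted test groupoid really is tame over $X$ while making the fiber $(q\pi_S)^{-1}(y)$ have infinite cardinality. A secondary but routine concern is verifying that $(q\pi_S)^{-1}(y)$ is essentially small and locally finite whenever $S$, $X$, $Y$, and $\Psi$ are, so that finiteness of its cardinality genuinely upgrades to tameness as a groupoid; this I would dispatch from the standing assumptions together with the explicit description of the objects and morphisms of the weak pullback.
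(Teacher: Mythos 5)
Your proposal is correct and takes essentially the same approach as the paper: both rest on the one-object skeletal computation from the proof of Theorem \ref{PROCESS1} (Equations \ref{formula1} and \ref{formula2}), reassembled via Lemmas \ref{vectorswelldefined}, \ref{addvectors}, \ref{linearity} and \ref{skeletal} and extended to all of $\R^{\u{X}}$ by linearity over nonnegative functions. You differ only in completeness, not in method: the paper leaves sufficiency of conditions (1)--(2) as an exercise and obtains necessity by evaluating against arbitrary nonnegative $\psi$, whereas you prove both directions explicitly, your test groupoids being exactly the groupoids that realize the paper's implicit delta and reciprocal-weighted choices of $\psi$.
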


\begin{proof} First suppose the span $S$ is tame and $v \maps \Psi \to X$ 
is a tame groupoid over $X$.  Equations \ref{formula1} and
\ref{formula2} show that if $S,X,Y,$ and $\Psi$ are skeletal, and
$\Psi$ has just one object $\ast$, then
\[   \utilde{S \Psi}([y]) = \sum_{s \in p^{-1}(x) \cap q^{-1}(y)}
\frac{|\Aut(v(\ast))|}{|\Aut(s)| |\Aut(\ast)|}  \]
On the other hand, 
\[   \utilde{\Psi}([x]) = 
\begin{cases}\displaystyle{\frac{1}{|\Aut(\ast)|}} 
              & \textrm{if} \; v(\ast) = x \\ 
                                           \\
              0                     & \textrm{otherwise.}  
\end{cases}
\]
So in this case, writing $\utilde{\Psi}$ as $\psi$, we have
\[   
(\utilde{S} \psi)([y]) = 
\sum_{[x] \in X} \;\,
\sum_{[s]\in p^{-1}(x) \bigcap q^{-1}(y)}
\frac{|\Aut(x)|}{|\Aut(s)|} \,\, \psi([x]) \,. 
\]
Since both sides are linear in $\psi$, and every nonnegative function
in $\R^{\u{X}}$ is a pointwise convergent nonnegative linear
combination of functions of the form $\psi = \utilde{\Psi}$ with
$\Psi$ as above, the above equation in fact holds for {\it all} $\psi
\in \R^{\u{X}}$.

Since all groupoids in sight are skeletal, we may equivalently 
write the above equation as
\[   
(\utilde{S} \psi)([y]) = 
\sum_{[x] \in \u{X}} \;\,
\sum_{[s]\in\u{p^{-1}(x)}\bigcap \u{q^{-1}(y)}}
\frac{|\Aut(x)|}{|\Aut(s)|} \,\, \psi([x]) \,. 
\]
The advantage of this formulation is that now both sides are 
unchanged when we replace $X$ and $Y$ by equivalent groupoids,
and replace $S$ by an equivalent span.   So, this equation holds
for all tame spans, as was to be shown.

If the span $S$ is tame, the sum above must converge for all 
functions $\psi$ of the form $\psi = \utilde{\Psi}$.  Any 
nonnegative function $\psi \maps \u{X} \to \R$ is of this form.
For the sum above to converge for {\it all} nonnegative $\psi$, 
this sum:
\[  \sum_{[s]\in\u{p^{-1}(x)}\bigcap \u{q^{-1}(y)}}
\frac{|\Aut(x)|} {|\Aut(s)|}\]
must have the following two properties:
\begin{enumerate}
\item
For any object $y \in Y$, it is nonzero only for objects $x$ in 
a finite number of isomorphism classes of $X$.
\item For every $x \in X$ and $y \in Y$, it converges to a finite
number.
\end{enumerate}
These conditions are equivalent to conditions 1) and 2) in the 
statement of the theorem.  We leave it as an exercise to check that
these conditions are not only necessary but also sufficient for 
$S$ to be tame.
\end{proof}

The previous theorem has many nice consequences.  For example:

\begin{proposition}
\label{add_spans}
Suppose $S$ and $T$ are tame spans from a groupoid $X$ to a groupoid
$Y$.  Then $\utilde{S+T} = \utilde{S} + \utilde{T}$.
\end{proposition}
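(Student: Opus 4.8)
The plan is to exploit the explicit matrix formula from Theorem~\ref{matrix1}, which expresses $\utilde{S}$ entirely in terms of the full inverse images $p^{-1}(x)\cap q^{-1}(y)$ and the relevant automorphism groups. The span $S+T$ has as its apex the coproduct $S+T$, and its two legs are the functors induced by the universal property of the coproduct; restricted to the summand $S$ they agree with $q_S,p_S$, and restricted to $T$ they agree with $q_T,p_T$. The crucial structural observation is that, since the coproduct of groupoids introduces no morphisms between the two summands, for every $x\in X$ and $y\in Y$ there is an equivalence of groupoids
\[
p_{S+T}^{-1}(x)\cap q_{S+T}^{-1}(y)\;\simeq\;
\bigl(p_S^{-1}(x)\cap q_S^{-1}(y)\bigr)\;+\;\bigl(p_T^{-1}(x)\cap q_T^{-1}(y)\bigr).
\]
Consequently the index set $\u{p_{S+T}^{-1}(x)}\cap\u{q_{S+T}^{-1}(y)}$ is the disjoint union of the corresponding sets for $S$ and for $T$, and the automorphism group of each object is unchanged by the inclusion into $S+T$.

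First I would use this decomposition together with the tameness criterion in Theorem~\ref{matrix1} to check that $S+T$ is itself tame. For condition~(1), the isomorphism classes $[x]$ for which $p_{S+T}^{-1}(x)\cap q_{S+T}^{-1}(y)$ is nonempty form the union of the two finite sets supplied by the tameness of $S$ and of $T$, hence a finite set. For condition~(2), each $p_{S+T}^{-1}(x)\cap q_{S+T}^{-1}(y)$ is a coproduct of two tame groupoids, and since groupoid cardinality is additive under coproduct it is again tame. Thus $\utilde{S+T}$ is well defined.

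Next I would substitute the decomposition into the operator formula. For any $\psi\in\R^{\u X}$ and any $y\in Y$, the inner sum over $[s]\in\u{p_{S+T}^{-1}(x)}\cap\u{q_{S+T}^{-1}(y)}$ splits as the sum over the classes coming from $S$ plus the sum over those coming from $T$; since the factor $|\Aut(x)|$ and the values $|\Aut(s)|$ are unaffected, this yields $(\utilde{S+T}\psi)([y])=(\utilde{S}\psi)([y])+(\utilde{T}\psi)([y])$. As this holds for all $y$ and all $\psi$, we conclude $\utilde{S+T}=\utilde{S}+\utilde{T}$.

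The only real obstacle is justifying the coproduct decomposition of the full inverse image and, with it, the additivity of groupoid cardinality on these fibers---but this is precisely the fact already used in Lemma~\ref{addvectors} and in part~(1) of Lemma~\ref{linearity}, namely that weak pullback and full inverse image commute with coproducts and that cardinality is additive over coproducts. Alternatively, one can bypass the formula entirely: using $(S+T)\Psi\simeq S\Psi+T\Psi$ for any tame $\Psi$ over $X$ (the span-variable analogue of Lemma~\ref{linearity}), together with the defining property $\utilde{S}\utilde{\Psi}=\utilde{S\Psi}$ from Theorem~\ref{PROCESS1}, the additivity of Lemma~\ref{addvectors}, and Lemma~\ref{vectorswelldefined}, one computes $\utilde{S+T}\utilde{\Psi}=\utilde{S}\utilde{\Psi}+\utilde{T}\utilde{\Psi}$ on the spanning vectors $\utilde{\Psi}$ and extends by linearity.
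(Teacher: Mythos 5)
Your proof is correct and takes essentially the same approach as the paper: the paper's entire proof of Proposition \ref{add_spans} is the one-line remark that it ``follows from the explicit formula given in Theorem \ref{matrix1},'' and your argument simply supplies the details that remark leaves implicit---the coproduct decomposition of the fibers $p^{-1}(x)\cap q^{-1}(y)$, the verification of the two tameness conditions, and the term-by-term splitting of the sum. The alternative route you sketch at the end (via Lemma \ref{addvectors}, Lemma \ref{vectorswelldefined}, and a span-variable analogue of Lemma \ref{linearity}) would also work, but it is not needed and is not what the paper does.
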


\begin{proof}
This follows from the explicit formula given in Theorem \ref{matrix1}.
\end{proof}

\section{Properties of Degroupoidification}
\label{properties}

In this section we prove all the remaining results stated in Section
\ref{degroupoidification}.  We start with results about scalar
multiplication.  Then we show that degroupoidification is a functor.
Finally, we prove the results about inner products and adjoints.

\subsection{Scalar Multiplication}
\label{scalar_mult}

To prove facts about scalar multiplication, we use the following lemma:

\begin{lemma}\label{essinverseproduct} 
Given a groupoid $\Lambda$ and a functor between groupoids $p\maps X\to Y$,
then the functor $c\times p \maps \Lambda\times Y\to 1\times X$ (where
$c\maps \Lambda\to 1$ is the unique morphism from $\Lambda$ to the terminal
groupoid $1$) satisfies:
\[ |(c\times p)^{-1}(1, x)|=|\Lambda| \, |p^{-1}(x)| \]
for all $x \in X$.
\end{lemma}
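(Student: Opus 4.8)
The plan is to unwind the definition of the full inverse image, recognize it as a cartesian product on the nose, and then apply the multiplicativity of groupoid cardinality under products. First I would describe the objects and morphisms of $(c\times p)^{-1}(1,x)$. By the definition of full inverse image, an object is an object of $\Lambda\times Y$, i.e.\ a pair $(\lambda, y)$, whose image under $c\times p$ is isomorphic to $(1,x)$ in $1\times X$. Since the terminal groupoid $1$ has a single object and a single morphism, the condition $(c(\lambda),p(y))\iso(1,x)$ reduces to $p(y)\iso x$ alone; the $\Lambda$-coordinate is unconstrained. A morphism in $(c\times p)^{-1}(1,x)$ is, again by definition, an arbitrary morphism of $\Lambda\times Y$ between two such objects, that is, a pair $(g,f)$ with $g\maps\lambda\to\lambda'$ in $\Lambda$ and $f\maps y\to y'$ in $Y$.

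Next I would compare this with the cartesian product $\Lambda\times p^{-1}(x)$. An object of $p^{-1}(x)$ is exactly an object $y$ of $Y$ with $p(y)\iso x$, and a morphism of $p^{-1}(x)$ is \emph{any} morphism of $Y$ between such objects. Hence an object of $\Lambda\times p^{-1}(x)$ is a pair $(\lambda,y)$ with $p(y)\iso x$, and a morphism is a pair $(g,f)$ with $g$ in $\Lambda$ and $f$ a morphism of $Y$ between objects lying over $x$. These are precisely the objects and morphisms just described, with the same identities and composition, so the identity assignment gives an isomorphism of groupoids
\[ (c\times p)^{-1}(1,x) = \Lambda\times p^{-1}(x). \]
The one point demanding care is that $p^{-1}(x)$ is a \emph{full} subgroupoid of $Y$, so that all morphisms between its objects are retained; this is exactly what makes the morphism sets agree, and it is the only place where the argument could go wrong.

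Finally I would invoke multiplicativity of groupoid cardinality under cartesian product. For any groupoids $\Lambda$ and $Z$, a morphism $(\lambda,s)\to(\lambda',s')$ in $\Lambda\times Z$ exists iff $\lambda\iso\lambda'$ and $s\iso s'$, so isomorphism classes of $\Lambda\times Z$ are pairs of isomorphism classes, and $\Aut(\lambda,s)=\Aut(\lambda)\times\Aut(s)$. Since all terms are nonnegative, the resulting double sum factors:
\[ |\Lambda\times Z| = \sum_{[\lambda],[s]} \frac{1}{|\Aut(\lambda)|\,|\Aut(s)|} = \Bigl(\sum_{[\lambda]}\frac{1}{|\Aut(\lambda)|}\Bigr)\Bigl(\sum_{[s]}\frac{1}{|\Aut(s)|}\Bigr) = |\Lambda|\,|Z|, \]
with the understanding that the identity holds in $[0,\infty]$. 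Applying this with $Z=p^{-1}(x)$ and combining it with the identification above gives $|(c\times p)^{-1}(1,x)| = |\Lambda|\,|p^{-1}(x)|$, as claimed. There is no serious obstacle: the whole argument is a definitional unwinding together with the standard product formula for cardinality, and the only step to be vigilant about is the matching of morphisms guaranteed by the fullness of the inverse image.
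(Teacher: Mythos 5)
Your proposal is correct and follows essentially the same route as the paper's proof: identify $(c\times p)^{-1}(1,x)$ with the product $\Lambda\times p^{-1}(x)$, then factor the cardinality sum using the fact that automorphism groups in a product groupoid are products of automorphism groups. Your version simply fills in more detail (the explicit matching of objects and morphisms, the fullness of the inverse image, and the validity of the factorization in $[0,\infty]$) than the paper, which asserts the identification directly from the definition of full inverse image.
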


\begin{proof}
By the definition of full inverse image we have
\[(c\times p)^{-1}(1, x) \iso \Lambda \times p^{-1}(x)  .\]
In the product $\Lambda \times p^{-1}(x)$, an automorphism of an
object $(\lambda,y)$ is an automorphism of $\lambda$ together with
an automorphism of $y$.  We thus obtain
\[|(c\times p)^{-1}(1, x)| = 
\sum_{[\lambda] \in \u{\Lambda}} \;
\sum_{[y]\in \u{p^{-1}(x)}} 
\frac{1}{|\Aut(\lambda)|} \,
\frac{1}{|\Aut(y)|}\]
which is equal to $|\Lambda|\,|p^{-1}(x)|$, as desired.
\end{proof}

\begin{proposition}
\label{scalarmult1}
Given a groupoid $\Lambda$ and a groupoid over $X$, 
say $v \maps \Psi \to X$, the groupoid
$\Lambda \times \Psi$ over $X$ satisfies
\[\utilde{\Lambda \times \Psi} = |\Lambda|\utilde{\Psi}.\]
\end{proposition}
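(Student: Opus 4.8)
The plan is to reduce the statement to a pointwise identity of groupoid cardinalities and then invoke Lemma \ref{essinverseproduct}. By Definition \ref{degroupoidification_of_vectors}, the vector $\utilde{\Lambda \times \Psi} \in \R^{\u{X}}$ is computed from the structure functor $v\pi_2 \maps \Lambda\times\Psi\to X$ that makes $\Lambda\times\Psi$ a groupoid over $X$, namely $\utilde{\Lambda\times\Psi}([x]) = |(v\pi_2)^{-1}(x)|$, while $\utilde{\Psi}([x]) = |v^{-1}(x)|$. So it suffices to establish, for every object $x \in X$, the numerical identity
\[ |(v\pi_2)^{-1}(x)| = |\Lambda|\,|v^{-1}(x)|. \]

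First I would identify the full inverse image $(v\pi_2)^{-1}(x)$ explicitly. Its objects are pairs $(\lambda, a) \in \Lambda\times\Psi$ with $v\pi_2(\lambda,a) = v(a) \cong x$, and its morphisms are arbitrary pairs of morphisms of $\Lambda\times\Psi$ between such objects. Since the defining condition constrains only the second coordinate, this groupoid is precisely $\Lambda\times v^{-1}(x)$. Equivalently, writing $c\maps\Lambda\to 1$ for the unique functor to the terminal groupoid and identifying $1\times X$ with $X$, it is exactly the full inverse image $(c\times v)^{-1}(1,x)$ that appears in Lemma \ref{essinverseproduct}: indeed $(c\times v)(\lambda,a) = (1, v(a))$, so the inverse image of $(1,x)$ imposes the single condition $v(a)\cong x$, with all morphisms between such objects allowed.

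With this identification in hand, I would apply Lemma \ref{essinverseproduct} with the functor $p$ taken to be $v\maps\Psi\to X$. The lemma yields $|(c\times v)^{-1}(1,x)| = |\Lambda|\,|v^{-1}(x)|$, which is exactly the pointwise identity required. Since this holds for every $[x]\in\u{X}$, the two functions on $\u{X}$ agree, giving $\utilde{\Lambda\times\Psi} = |\Lambda|\,\utilde{\Psi}$ as vectors in $\R^{\u{X}}$.

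The only point requiring genuine care—and the step I would expect to be the main, though mild, obstacle—is the convention-matching in the second paragraph. The full inverse image is defined using the isomorphism $v(a)\cong x$ rather than strict equality, so one must verify that $(v\pi_2)^{-1}(x)$ and the groupoid $(c\times v)^{-1}(1,x)$ of the lemma coincide on the nose, including their morphisms, and not merely up to equivalence. Once that bookkeeping is confirmed, the proposition is an immediate consequence of the lemma.
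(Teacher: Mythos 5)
Your proposal is correct and takes essentially the same route as the paper, whose entire proof is to invoke Lemma \ref{essinverseproduct}; you have simply spelled out the pointwise reduction and the identification $(v\pi_2)^{-1}(x) \iso \Lambda \times v^{-1}(x)$ that the paper leaves implicit (and which also appears inside the proof of that lemma). The convention-matching issue you flag is real but routine, and your handling of it is fine.
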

\begin{proof} This follows from Lemma \ref{essinverseproduct}.
\end{proof}

\begin{proposition}
\label{scalarmult2}
Given a tame groupoid $\Lambda$ and a tame span
\[
\xymatrix{
& S\ar[dl] \ar[dr] & \\
Y & & X
}
\]
then $\Lambda \times S$ is tame and
\[\utilde{\Lambda \times S} = |\Lambda| \, \utilde{S}.\]
\end{proposition}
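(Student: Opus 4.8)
The plan is to reduce everything to the corresponding statement about scalar multiplication of \emph{groupoids over $Y$}, namely Proposition \ref{scalarmult1}, together with the uniqueness clause of Theorem \ref{PROCESS1}. The key geometric observation is that multiplying a span by the scalar groupoid $\Lambda$ and then applying it to a groupoid over $X$ agrees with first applying $S$ and then scaling by $\Lambda$.

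First I would establish the equivalence
\[ (\Lambda \times S)\Psi \;\simeq\; \Lambda \times (S\Psi) \]
as groupoids over $Y$, for every groupoid $v \maps \Psi \to X$. Both sides are computed by weak pullback along the right leg: an object of the left-hand side is a tuple $(\lambda, s, a, \alpha)$ with $\lambda \in \Lambda$, $s \in S$, $a \in \Psi$, and $\alpha \maps p(s) \to v(a)$ an isomorphism, while an object of the right-hand side is $(\lambda, (s,a,\alpha))$ carrying exactly the same data. The equivalence simply reorders these entries, and one checks it commutes with the maps down to $Y$, both of which factor through $q \maps S \to Y$ and ignore the $\Lambda$-coordinate. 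This is the exact analogue of Lemma \ref{linearity}(2) and is proved in the same routine way.

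Second, I would deduce tameness of the span $\Lambda \times S$. By definition I must show that whenever $\Psi$ is tame over $X$, the groupoid $(\Lambda \times S)\Psi$ is tame over $Y$. Since $S$ is a tame span, $S\Psi$ is tame over $Y$, so each full inverse image $(q\pi_S)^{-1}(y)$ is a tame groupoid. Using the equivalence from the first step, the full inverse image of $y$ in $(\Lambda \times S)\Psi$ is equivalent to $\Lambda \times (q\pi_S)^{-1}(y)$, whose cardinality is $|\Lambda|\,|(q\pi_S)^{-1}(y)|$ since groupoid cardinality is multiplicative under cartesian product (cf.\ the proof of Lemma \ref{essinverseproduct}). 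This is finite precisely because $\Lambda$ is tame, while essential smallness and local finiteness pass to products; hence $\Lambda \times S$ is tame.

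Finally, combining the equivalence with Proposition \ref{scalarmult1} applied over $Y$ gives, for every tame $\Psi$ over $X$,
\[ \utilde{(\Lambda \times S)\Psi} = \utilde{\Lambda \times (S\Psi)} = |\Lambda|\,\utilde{S\Psi} = |\Lambda|\,\utilde{S}\,\utilde{\Psi}. \]
Thus $|\Lambda|\,\utilde{S}$ is a linear operator satisfying the very relation $T\,\utilde{\Psi} = \utilde{(\Lambda \times S)\Psi}$ that characterizes $\utilde{\Lambda \times S}$ in Theorem \ref{PROCESS1}; by the uniqueness asserted there, $\utilde{\Lambda \times S} = |\Lambda|\,\utilde{S}$. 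The only mildly delicate point is bookkeeping the weak pullback in the first step carefully enough to see that the equivalence respects the projections to $Y$; once that is in hand, every remaining step is a direct appeal to results already proved.
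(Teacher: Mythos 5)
Your proof is correct and takes essentially the same approach as the paper: the paper's one-line proof cites Lemma \ref{essinverseproduct}, which is precisely the fiberwise multiplicativity $|\Lambda \times (q\pi_S)^{-1}(y)| = |\Lambda|\,|(q\pi_S)^{-1}(y)|$ that you invoke after identifying $(\Lambda \times S)\Psi$ with $\Lambda \times (S\Psi)$. Your additional bookkeeping---the reordering equivalence, the appeal to Proposition \ref{scalarmult1} over $Y$, and the uniqueness clause of Theorem \ref{PROCESS1}---just spells out the details the paper leaves implicit.
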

\begin{proof} This follows from Lemma \ref{essinverseproduct}.
\end{proof}

\subsection{Functoriality of Degroupoidification}

In this section we prove that our process of turning groupoids into
vector spaces and spans of groupoids into linear operators is indeed a
functor. We first show that the process preserves identities, then
show associativity of composition, from which many other things
follow, including the preservation of composition.  The lemmas in this
section add up to a proof of the following theorem:

\begin{thm}\label{functor}
Degroupoidification is a functor from the category of groupoids
and equivalence classes of tame spans to the category of real
vector spaces and linear operators.
\end{thm}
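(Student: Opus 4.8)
The plan is to verify that the assignment sending each groupoid $X$ to $\R^{\u{X}}$ and each equivalence class of tame spans $[S]$ to the operator $\utilde{S}$ respects identities and composition. Well-definedness is already in hand: Theorem \ref{PROCESS1} produces a unique operator $\utilde{S}$ from each tame span $S$, and Theorem \ref{linops_equivspans1} guarantees that equivalent spans yield the same operator, so $\utilde{\;\;}$ descends to equivalence classes. Before checking the functor axioms one must know that $\Span$ is a category at all, i.e.\ that the composite of two tame spans is again a tame span, that composition respects equivalence of spans, and that it is associative up to equivalence. The key technical input for all of this is associativity of the weak pullback up to canonical equivalence, and I would exploit the observation that a groupoid $v \maps \Psi \to X$ over $X$ is the special case of a span from the terminal groupoid $1$ to $X$, so that forming $S\Psi$ by weak pullback is literally composing the span $S$ with the span $\Psi$.

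First I would treat identities. The identity morphism on $X$ is the span with both legs equal to $\id_X$. For any tame groupoid $\Psi$ over $X$, the weak pullback $X\Psi$ has objects $(s,a,\alpha)$ with $s \in X$, $a \in \Psi$, and $\alpha \maps s \to v(a)$; the projection to $a$ is essentially surjective and fully faithful, since each morphism $g \maps a \to a'$ admits a unique compatible $f \maps s \to s'$. Hence $X\Psi \simeq \Psi$ as groupoids over $X$, so by Lemma \ref{vectorswelldefined} we get $\utilde{X\Psi} = \utilde{\Psi}$ and therefore $\utilde{\id_X}\,\utilde{\Psi} = \utilde{X\Psi} = \utilde{\Psi}$. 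Since every nonnegative function on $\u{X}$ arises as some $\utilde{\Psi}$ and operators are linear, the uniqueness clause of Theorem \ref{PROCESS1} forces $\utilde{\id_X} = \id_{\R^{\u{X}}}$.

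Next comes composition. I would first establish, by an explicit comparison functor together with a coherence check, that iterated weak pullbacks associate up to equivalence: for composable tame spans $T \maps Y \to Z$ and $S \maps X \to Y$ and any groupoid $\Psi$ over $X$, there is an equivalence $(TS)\Psi \simeq T(S\Psi)$ of groupoids over $Z$. Reading $\Psi$ as a span from $1$ to $X$, this is exactly the instance of associativity of span composition in which the rightmost span comes from the terminal groupoid. Granting this, tameness of $TS$ is immediate: if $\Psi$ is tame over $X$ then $S\Psi$ is tame over $Y$ because $S$ is tame, hence $T(S\Psi)$ is tame over $Z$ because $T$ is tame, and $(TS)\Psi \simeq T(S\Psi)$ transports tameness by Lemma \ref{vectorswelldefined}. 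The same equivalence yields preservation of composition:
\[ \utilde{TS}\,\utilde{\Psi} = \utilde{(TS)\Psi} = \utilde{T(S\Psi)} = \utilde{T}\,\utilde{S\Psi} = \utilde{T}\,\utilde{S}\,\utilde{\Psi}, \]
and since this holds for every tame $\Psi$ over $X$, uniqueness in Theorem \ref{PROCESS1} gives $\utilde{TS} = \utilde{T}\,\utilde{S}$.

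The main obstacle is the purely groupoid-theoretic bookkeeping behind associativity of the weak pullback: constructing the comparison equivalence $(TS)\Psi \simeq T(S\Psi)$, and more generally $U(TS) \simeq (UT)S$, and checking it is coherent, together with verifying that composition of spans respects equivalence so that it is well-defined on equivalence classes. Both are diagram chases rather than deep arguments, but they carry essentially all the weight; everything else follows formally once these equivalences are in place and one invokes Lemma \ref{vectorswelldefined} to pass between equivalent weak pullbacks. I would package the associativity equivalence, the tameness of composites, and the identity $\utilde{TS}=\utilde{T}\,\utilde{S}$ as the lemmas whose conjunction is precisely the assertion that $\utilde{\;\;} \maps \Span \to \Vect$ is a functor.
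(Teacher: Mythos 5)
Your proposal is correct and follows essentially the same route as the paper: well-definedness on equivalence classes via Theorems \ref{PROCESS1} and \ref{linops_equivspans1}, and preservation of composition by viewing a groupoid over $X$ as a span from the terminal groupoid and invoking associativity of weak pullback (the paper's Proposition \ref{associativity}), which also yields tameness of the composite. The only minor difference is the treatment of identities---the paper deduces $\utilde{1_X} = 1_{\R^{\u{X}}}$ from the explicit formula of Theorem \ref{matrix1}, whereas you construct the equivalence $X\Psi \simeq \Psi$ directly and appeal to uniqueness in Theorem \ref{PROCESS1}; both are immediate.
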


\begin{proof}
As mentioned above, the proof follows from Lemmas \ref{identities} and
\ref{composition}.
\end{proof}

\begin{lemma}\label{identities}
Degroupoidification preserves identities, i.e., given a groupoid $X$,
$\utilde{1_X} = 1_{\R^{\utilde{X}}}$, where $1_X$ is the identity span
from $X$ to $X$ and $1_{\R^{\utilde{X}}}$ is the identity operator on
$\R^{\utilde{X}}$.
\end{lemma}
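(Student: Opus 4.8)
The plan is to avoid the explicit matrix formula and instead lean on the universal characterization from Theorem \ref{PROCESS1}. The identity span $1_X$ has apex $X$ with both legs equal to $\id_X \maps X \to X$. By Theorem \ref{PROCESS1}, once we know $1_X$ is tame, $\utilde{1_X}$ is the \emph{unique} linear operator with $\utilde{1_X}\,\utilde{\Psi} = \utilde{1_X\Psi}$ for every tame groupoid $v \maps \Psi \to X$. The identity operator $1_{\R^{\u{X}}}$ trivially satisfies $1_{\R^{\u X}}\utilde{\Psi} = \utilde{\Psi}$, so it suffices to prove $\utilde{1_X\Psi} = \utilde{\Psi}$ for all such $\Psi$; the uniqueness clause then forces $\utilde{1_X} = 1_{\R^{\u X}}$.

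The crux is therefore to show that the weak pullback $1_X\Psi$ is equivalent to $\Psi$ as a groupoid over $X$. Recall that an object of $1_X\Psi$ is a triple $(s,\psi,\alpha)$ with $s \in X$, $\psi \in \Psi$, and an isomorphism $\alpha \maps s \to v(\psi)$ in $X$, and a morphism is a compatible pair $(f,g)$. I would define $G \maps \Psi \to 1_X\Psi$ by $G(\psi) = (v(\psi),\psi,\id_{v(\psi)})$, with the projection $\pi_\Psi \maps 1_X\Psi \to \Psi$, $(s,\psi,\alpha)\mapsto\psi$, as candidate quasi-inverse. One checks that $\pi_\Psi G = \id_\Psi$ on the nose, while the components $(\alpha^{-1},\id_\psi)$ assemble into a natural isomorphism $G\pi_\Psi \Rightarrow \id_{1_X\Psi}$, so $G$ is an equivalence of groupoids in the sense of Definition \ref{equivalence_of_groupoids}. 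Moreover the structure map $q\pi_S$ of $1_X\Psi$ over $X$ sends $(s,\psi,\alpha)$ to $q(s)=s$, and since $q=\id_X$ we get $q\pi_S\circ G = v$ strictly; hence the defining triangle over $X$ commutes and $G$ exhibits $1_X\Psi \simeq \Psi$ as groupoids over $X$.

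With this equivalence in hand everything falls out: Lemma \ref{vectorswelldefined} tells us that tameness of $v$ transfers to $q\pi_S$ (so $1_X$ really is a tame span) and simultaneously gives $\utilde{1_X\Psi} = \utilde{\Psi}$, completing the argument. The main obstacle is purely bookkeeping, namely verifying that $G$ and $\pi_\Psi$ are mutually quasi-inverse via the natural isomorphism built from the chosen $\alpha$'s and that this equivalence respects the projection down to $X$; none of it is deep, but it must be arranged so the triangle over $X$ genuinely commutes. As an independent cross-check I would note that the explicit formula of Theorem \ref{matrix1} gives the same answer at once: with $p = q = \id_X$ the groupoid $p^{-1}(x)\cap q^{-1}(y)$ is empty unless $[x]=[y]$ and otherwise consists of a single isomorphism class with $\Aut(s)=\Aut(x)$, so the inner sum contributes $|\Aut(x)|/|\Aut(s)| = 1$ and $(\utilde{1_X}\psi)([y]) = \psi([y])$.
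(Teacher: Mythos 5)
Your proof is correct, but it takes a different route from the paper's. The paper proves Lemma \ref{identities} in one line, by exactly the computation you relegate to your final ``cross-check'': plugging $p = q = \id_X$ into the explicit formula of Theorem \ref{matrix1}, where the inner sum collapses to $|\Aut(x)|/|\Aut(s)| = 1$ when $[x]=[y]$ and $0$ otherwise. Your main argument instead works at the groupoid level: you exhibit an explicit equivalence $G \maps \Psi \to 1_X\Psi$ of groupoids over $X$ (with $\pi_\Psi G = \id_\Psi$ strictly and $G\pi_\Psi \cong \id$ via the components $(\alpha^{-1},\id_\psi)$), deduce from Lemma \ref{vectorswelldefined} both that $1_X$ is tame and that $\utilde{1_X\Psi} = \utilde{\Psi}$, and then invoke the uniqueness clause of Theorem \ref{PROCESS1} to conclude $\utilde{1_X} = 1_{\R^{\u{X}}}$. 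All the details check out: the compatibility square $\alpha' f = v(g)\alpha$ gives exactly the naturality of your transformation, and the triangle over $X$ commutes on the nose since $q = \id_X$. What your approach buys is a basis-free argument that never touches the matrix formula, and it actually establishes a stronger, purely groupoid-level fact---that the identity span acts as the identity up to equivalence of groupoids over $X$, prior to any degroupoidification---which is the statement one would want anyway for the bicategorical picture of $\Span$ sketched in Section \ref{degroupoidification}. What the paper's route buys is brevity: once Theorem \ref{matrix1} is in hand, the lemma is an immediate corollary, with no need to construct equivalences or check naturality.
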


\begin{proof}
This follows from the explicit formula given in Theorem \ref{matrix1}.
\end{proof}

We now want to prove the associativity of composition of tame spans.
Amongst the consequences of this proposition we can derive the
preservation of composition under degroupoidification.  Given a triple
of composable spans:
\[
\xymatrix{
 & T \ar[dl]_{q_T}\ar[dr]^{p_T} && S \ar[dl]_{q_S}\ar[dr]^{p_S} && R \ar[dl]_{q_R}\ar[dr]^{p_R} &\\
Z && Y && X && W
}
\]
we want to show that composing in the two possible orders---$T(SR)$
or $(TS)R$---will provide equivalent spans of groupoids.  In fact,
since groupoids, spans of groupoids, and isomorphism classes of maps
between spans of groupoids naturally form a bicategory, there exists a
natural isomorphism called the {\bf associator}.  This tells us that
the spans $T(SR)$ and $(TS)R$ are in fact equivalent.  But since we
have not constructed this bicategory, we will instead give an explicit
construction of the equivalence $T(SR) \stackrel{\sim}\rightarrow
(TS)R$.  

\begin{proposition}\label{associativity}
Given a composable triple of tame spans, the operation of composition
of tame spans by weak pullback is associative up to equivalence of
spans of groupoids.
\end{proposition}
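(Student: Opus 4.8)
The plan is to unfold both triple composites explicitly using the definition of weak pullback recalled above, to observe that the two resulting groupoids carry literally the same object-and-morphism data once the bracketing is erased, and then to write down the obvious relabelling functor and check that it is an isomorphism of spans. Because the second pullback in each bracketing is taken over an honest \emph{composite} of projection functors, no coherence isomorphisms intervene, so the comparison will in fact be strict, giving more than the asserted equivalence. Throughout, the legs are named as in the diagram: $q_T,p_T$ for $T$, $q_S,p_S$ for $S$, and $q_R,p_R$ for $R$, with $\pi$ denoting the projections out of a weak pullback.

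First I would compute $(TS)R$. Forming $TS$ over $Y$ yields objects $(t,s,\gamma)$ with $t\in T$, $s\in S$, and an isomorphism $\gamma\maps p_T(t)\to q_S(s)$ in $Y$, with right leg $p_S\pi_S$ and left leg $q_T\pi_T$. Pulling this back against $R$ over $X$ then gives objects $\big((t,s,\gamma),r,\delta\big)$ with $r\in R$ and an isomorphism $\delta\maps p_S(s)\to q_R(r)$ in $X$, since $(p_S\pi_S)(t,s,\gamma)=p_S(s)$. A morphism is a triple $(f,g,h)$ with $f\maps t\to t'$ in $T$, $g\maps s\to s'$ in $S$, $h\maps r\to r'$ in $R$, subject to the two square conditions
\[ \gamma'\circ p_T(f)=q_S(g)\circ\gamma, \qquad \delta'\circ p_S(g)=q_R(h)\circ\delta. \]
Running the same unfolding for $T(SR)$---first $SR$ over $X$, then pulling back against $T$ over $Y$---produces objects $\big(t,(s,r,\delta),\gamma\big)$ recording exactly the same data $t,s,r$ and the same pair of isomorphisms $\gamma,\delta$, and morphisms are again triples $(f,g,h)$ subject to the identical two conditions (the outer condition now reads $\gamma'\circ p_T(f)=q_S(g)\circ\gamma$ because $(q_S\pi_S)(g,h)=q_S(g)$).

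Given this, I would define the functor $F\maps (TS)R\to T(SR)$ by $\big((t,s,\gamma),r,\delta\big)\mapsto\big(t,(s,r,\delta),\gamma\big)$ on objects and $(f,g,h)\mapsto(f,g,h)$ on morphisms. Since the two groupoids have identical underlying data, $F$ is a bijection on objects and on hom-sets, hence an isomorphism of groupoids; functoriality is immediate because composition in both iterated weak pullbacks is componentwise. It remains to check that $F$ is a map of spans, i.e.\ that it commutes with the legs to $Z$ and to $W$. Tracing definitions, the left leg of $(TS)R$ sends $\big((t,s,\gamma),r,\delta\big)$ to $q_T(t)\in Z$ while the left leg of $T(SR)$ sends $\big(t,(s,r,\delta),\gamma\big)$ to $q_T(t)$; likewise both right legs return $p_R(r)\in W$. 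So both triangles commute on the nose, $F$ is an isomorphism of spans, and in particular $T(SR)\simeq (TS)R$.

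The only genuine work is the bookkeeping: keeping straight that $\gamma$ lives in $Y$ and $\delta$ in $X$, and verifying that the square conditions arising from the outer and inner pullbacks coincide after relabelling. The point I would emphasise---and what makes this go through cleanly rather than merely up to a comparison cell---is that the legs $p_S\pi_S$ and $q_S\pi_S$ used in the second pullback of each bracketing are strict composites of projections, so the isomorphism demanded in one bracketing is literally the isomorphism supplied by the other. Tameness of the three spans plays no role in the equivalence itself; it is hypothesised only so that the composites again lie in the category of tame spans, as established separately.
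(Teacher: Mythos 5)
Your proof is correct and takes essentially the same route as the paper's: unfold both iterated weak pullbacks, observe that the object and morphism data agree up to rebracketing, and exhibit the rebracketing functor as an isomorphism of spans commuting strictly with the legs (the paper likewise takes all the natural isomorphisms in the equivalence to be identities). The only cosmetic difference is that your comparison isomorphisms point in the opposite direction from the paper's weak-pullback convention (e.g.\ $\gamma \maps p_T(t)\to q_S(s)$ rather than $q_S(s)\to p_T(t)$), which affects nothing.
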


\begin{proof}
We consider the above triple of spans in order to construct the
aforementioned equivalence.  The equivalence is simple to describe if
we first take a close look at the groupoids $T(SR)$ and $(TS)R$.  The
composite $T(SR)$ has objects $(t,(s,r,\alpha),\beta)$ such that $r\in
R$, $s\in S$, $t\in T$, $\alpha\maps q_R(r)\to p_S(s)$, and
$\beta\maps q_S(s)\to p_T(t)$, and morphisms $f \maps
(t,(s,r,\alpha),\beta) \to (t',(s',r',\alpha'),\beta')$, which consist
of a map $g \maps (s,r,\alpha) \to (s',r',\alpha')$ in $SR$ and a map
$h \maps t \to t'$ such that the following diagram commutes:
\[
\xymatrix{
q_S\pi_s((s,r,\alpha)) \ar[r]^---{\beta} \ar[d]_{q_S\pi_S(g)} & p_T(t) \ar[d]^{p_T(h)} \\
q_S\pi_s((s',r',\alpha')) \ar[r]_---{\beta'} & p_T(t')
}
\]
where $\pi_S$ maps the composite $SR$ to $S$.  Further, $g$ consists of a 
pair of maps $k\maps r \to r'$ and $j \maps s\to s'$ such that the following 
diagram commutes: 
\[
\xymatrix{
q_R(r) \ar[r]^{\alpha} \ar[d]_{q_S(k)} & p_S(s) \ar[d]^{p_S(j)} \\
q_R(r') \ar[r]_{\alpha'} & p_S(s')
}
\]

The groupoid $(TS)R$ has objects $((t,s,\alpha),r,\beta)$ such that
$r\in R$, $s\in S$, $t\in T$, $\alpha\maps q_S(s)\to p_T(t)$,
and $\beta\maps q_R(r)\to p_S(s)$, and morphisms $f \maps
((t,s,\alpha),r,\beta) \to ((t',s',\alpha'),r',\beta')$, which consist
of a map $g \maps (t,s,\alpha) \to (t',s',\alpha')$ in $TS$ and a map
$h \maps r \to r'$ such that the following diagram commutes:
\[
\xymatrix{
p_R(r) \ar[d]_{p_R(h)}\ar[r]^---{\beta} & p_S\pi_s((t,s,\alpha)) 
 \ar[d]^{p_S\pi_S(g)}   \\
p_R(r')  \ar[r]_---{\beta'} & p_S\pi_s((t',s',\alpha')) 
}
\]
Further, $g$ consists of a pair of maps $k\maps s \to s'$ and $j \maps
t\to t'$ such that the following diagram commutes:
\[
\xymatrix{
q_S(s) \ar[r]^{\alpha} \ar[d]_{q_S(k)} & p_T(t) \ar[d]^{p_T(j)} \\
q_S(s') \ar[r]_{\alpha'} & p_T(t')
}
\]

We can now write down a functor $F\maps T(SR) \to (TS)R$:
\[ (t,(s,r,\alpha),\beta) \mapsto ((t,s,\beta),r,\alpha) \]
Again, a morphism $f \maps (t,(s,r,\alpha),\beta) \to
(t',(s',r',\alpha'),\beta')$ consists of maps $k\maps r \to r'$, $j
\maps s\to s'$, and $h \maps t \to t'$.  We need to define $F(f) \maps
((t,s,\beta),r,\alpha) \to ((t',s',\beta'),r',\alpha')$.  The first
component $g' \maps (t,s,\beta) \to (t',s',\beta')$ consists of the
maps $j\maps s\to s'$ and $h\maps t \to t'$, and the following diagram
commutes:
\[
\xymatrix{
q_S(s) \ar[r]^{\beta}\ar[d]_{q_S(j)} & p_T(t) \ar[d]^{p_T(h)} \\
q_S(s') \ar[r]_{\beta'} & p_T(t')
}
\]
The other component map of $F(f)$ is $k \maps r \to r'$ and we see
that the following diagram also commutes:
\[
\xymatrix{
p_R(r) \ar[d]_{p_R(k)}\ar[rr]^---{\alpha} && p_S\pi_s((t,s,\beta))  \ar[d]^{p_S\pi_S(g')}   \\
p_R(r')  \ar[rr]_---{\alpha'} && p_S\pi_s((t',s',\beta')) 
}
\]
thus, defining a morphism in $(TS)R$.

We now just need to check that $F$ preserves identities and
composition and that it is indeed an isomorphism.  We will then have
shown that the apexes of the two spans are isomorphic.  First, given
an identity morphism $1\maps (t,(s,r,\alpha),\beta) \to
(t,(s,r,\alpha),\beta)$, then $F(1)$ is the identity morphism on
$((t,s,\beta),r,\alpha)$.  The components of the identity morphism are
the respective identity morphisms on the objects $r$,$s$, and $t$.  By
the construction of $F$, it is clear that $F(1)$ will then be an
identity morphism.

Given a pair of composable maps $f \maps (t,(s,r,\alpha),\beta) \to
(t',(s',r',\alpha'),\beta')$ and $f' \maps (t',(s',r',\alpha'),\beta')
\to (t'',(s'',r'',\alpha''),\beta'')$ in $T(SR)$, the composite is a
map $f'f$ with components $g'g \maps (s,r,\alpha) \to
(s'',r'',\alpha'')$ and $h'h\maps t \to t''$.  Further, $g'g$ has
component morphisms $k'k \maps r\to r''$ and $j'j \maps s\to s'$.  It
is then easy to check that under the image of $F$ this composition is
preserved.

The construction of the inverse of $F$ is implicit in the construction
of $F$, and it is easy to verify that each composite $FF^{-1}$ and
$F^{-1}F$ is an identity functor.  Further, the natural isomorphisms
required for an equivalence of spans can each be taken to be the
identity.
\end{proof}

It follows from the associativity of composition that degroupoidification
preserves composition:

\begin{lemma}
\label{composition}
Degroupoidification preserves composition.  That is, given a 
pair of composable tame spans:
\[
\xymatrix{
  & T\ar[dr]\ar[dl]  &   & S\ar[dr]\ar[dl] & \\
Z &                  & Y &                 & X
}
\]   
we have
\[ \utilde{T}\utilde{S} = \utilde{TS}. \]
\end{lemma}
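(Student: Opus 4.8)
The plan is to deduce the identity $\utilde{TS} = \utilde{T}\,\utilde{S}$ from the uniqueness clause of Theorem \ref{PROCESS1} together with the associativity of weak-pullback composition proved in Proposition \ref{associativity}. Recall from Theorem \ref{PROCESS1} that a linear operator on $\R^{\u{X}}$ is completely determined by its values on the vectors $\utilde{\Psi}$ coming from tame groupoids $\Psi$ over $X$, since every nonnegative function on $\u{X}$ has this form and every function is a difference of two nonnegative ones. Hence it will be enough to check that both operators agree on each such $\utilde{\Psi}$, after first verifying that the composite $TS$ is itself tame so that $\utilde{TS}$ makes sense.

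First I would settle tameness. Fixing a tame groupoid $v \maps \Psi \to X$, tameness of the span $S$ makes $S\Psi$ a tame groupoid over $Y$, and then tameness of $T$ makes $T(S\Psi)$ a tame groupoid over $Z$. Viewing $\Psi$ as a span from the terminal groupoid $1$ to $X$ (right leg the unique functor to $1$, left leg $v$), the operation $S\Psi$ is precisely the composite of the spans $S$ and $\Psi$, so that $T(S\Psi)$ and $(TS)\Psi$ are the two bracketings of the composable triple $T,S,\Psi$. Proposition \ref{associativity} then gives an equivalence of spans $T(S\Psi) \simeq (TS)\Psi$, in particular an equivalence of the underlying groupoids over $Z$; by Lemma \ref{vectorswelldefined} this forces $(TS)\Psi$ to be tame over $Z$ and yields $\utilde{(TS)\Psi} = \utilde{T(S\Psi)}$. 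As $\Psi$ was arbitrary, $TS$ is tame.

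With that in hand, the agreement on vectors would follow by applying the defining property of Theorem \ref{PROCESS1} three times: on the one hand $\utilde{T}\,\utilde{S}\,\utilde{\Psi} = \utilde{T}\,\utilde{S\Psi} = \utilde{T(S\Psi)}$, and on the other $\utilde{TS}\,\utilde{\Psi} = \utilde{(TS)\Psi}$, and the two right-hand sides coincide by the equivalence just described. Since the operators $\utilde{T}\,\utilde{S}$ and $\utilde{TS}$ thus agree on all $\utilde{\Psi}$, linearity and the spanning remark above finish the argument.

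The step I expect to be the main obstacle is the reinterpretation underlying the second paragraph: one must recognize the operation $S\Psi$ of applying a span to a groupoid over $X$ as a genuine instance of span composition, by treating $\Psi$ as a span emanating from the terminal groupoid, so that the already-proved associativity of Proposition \ref{associativity} can be invoked for the triple $(T,S,\Psi)$. Care is also needed to confirm that the equivalence of spans it produces descends to an equivalence of the relevant groupoids over $Z$, which is exactly what permits Lemma \ref{vectorswelldefined} to convert it into the equality of vectors $\utilde{(TS)\Psi} = \utilde{T(S\Psi)}$. Beyond this, the proof is pure bookkeeping with Theorem \ref{PROCESS1}.
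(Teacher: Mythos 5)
Your proof is correct and follows essentially the same route as the paper: both treat the tame groupoid $\Psi$ over $X$ as a span with one leg over the terminal groupoid $1$, invoke Proposition \ref{associativity} on the triple $(T,S,\Psi)$ to get $T(S\Psi) \simeq (TS)\Psi$, and then use invariance under equivalence (you via Lemma \ref{vectorswelldefined}, the paper via Theorem \ref{linops_equivspans1}) to conclude the operators agree on every $\utilde{\Psi}$. Your explicit verification that $TS$ is tame is a point the paper leaves implicit, but it does not change the substance of the argument.
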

 
\begin{proof}
Consider the composable pair of spans above along with a groupoid $\Psi$
over $X$:
\[
\xymatrix{
  & T\ar[dr]\ar[dl]  &   & S\ar[dr]\ar[dl] &  & \Psi\ar[dr]\ar[dl] &\\
Z &                  & Y &                 & X &     & 1
}
\] 
We can consider the groupoid over $X$ as a span by taking the right leg
to be the unique map to the terminal groupoid.  We can compose this triple
of spans  in two ways; either $T(S\Psi)$ or $(TS)\Psi$.  By the
Proposition \ref{associativity} stated above, these spans are equivalent.
By Theorem \ref{linops_equivspans1}, degroupoidification produces the same linear operators.
Thus, composition is preserved.  That is,
\[ \utilde{T}\utilde{S}\utilde{\Psi} = \utilde{TS}\utilde{\Psi}. \]
\end{proof}

\subsection{Inner Products and Adjoints}
\label{innerprodandadjoint}

Now we prove our results about the inner product of groupoids over
a fixed groupoid, and the adjoint of a span:

\begin{thm}\label{innerprod_theorem}
Given a groupoid $X$, there is a unique inner product $\ip{\cdot}{\cdot}$ 
on the vector space $L^2(X)$ such that
\[ \ip{\utilde{\Phi}}{\utilde{\Psi}} = |\ip{\Phi}{\Psi}| \]
whenever $\Phi$ and $\Psi$ are square-integrable groupoids over $X$.
With this inner product $L^2(X)$ is a real Hilbert space.
\end{thm}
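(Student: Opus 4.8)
The plan is to reduce the whole statement to a single closed formula expressing the cardinality of the weak pullback $\ip{\Phi}{\Psi}$ purely in terms of the vectors $\utilde{\Phi}$ and $\utilde{\Psi}$. First I would establish that, for any groupoids $v \maps \Phi \to X$ and $w \maps \Psi \to X$,
\[ |\ip{\Phi}{\Psi}| = \sum_{[x] \in \u{X}} |\Aut(x)| \, \utilde{\Phi}([x]) \, \utilde{\Psi}([x]) \]
as an identity in $[0,\infty]$. This is the $Y = 1$ specialization of the matrix-entry formula of Theorem \ref{matrix1} (viewing $\Phi$ as a span from $X$ to the terminal groupoid), but it is cleanest to verify directly, so as to avoid imposing tameness of $\Phi$ as a span. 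By Lemmas \ref{vectorswelldefined} and \ref{skeletal} I may assume $\Phi$, $\Psi$ and $X$ are skeletal, so that an object of $\ip{\Phi}{\Psi}$ is a triple $(a,b,\alpha)$ with $v(a) = w(b) = x$ and $\alpha \in \Aut(x)$. A morphism out of $(a,b,\alpha)$ is freely determined by an arbitrary pair of morphisms out of $a$ in $\Phi$ and out of $b$ in $\Psi$, so $|\Mor((a,b,\alpha),-)| = |\Mor(a,-)| \, |\Mor(b,-)|$. Applying Lemma \ref{ALTCARD} to $\ip{\Phi}{\Psi}$ (whose isomorphism classes contain finitely many objects, since all automorphism groups involved are finite), then grouping over $x$, summing the $|\Aut(x)|$ choices of $\alpha$, and applying Lemma \ref{ALTCARD} again to the fibres $v^{-1}(x)$ and $w^{-1}(x)$, yields the formula. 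The crucial consequence is that $|\ip{\Phi}{\Psi}|$ depends only on $\utilde{\Phi}$ and $\utilde{\Psi}$, and that $\Psi$ is square-integrable precisely when $\sum_{[x]} |\Aut(x)| \, \utilde{\Psi}([x])^2 < \infty$.

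Second, I would introduce the weighted space
\[ \mathcal{H} = \Bigl\{ \phi \maps \u{X} \to \R \ \Big| \ \sum_{[x] \in \u{X}} |\Aut(x)| \, \phi([x])^2 < \infty \Bigr\} \]
with the bilinear form $\ip{\phi}{\psi} = \sum_{[x]} |\Aut(x)| \, \phi([x]) \, \psi([x])$. This form converges absolutely on $\mathcal{H}$ by Cauchy--Schwarz, is visibly symmetric and bilinear, and is positive definite because the weights $|\Aut(x)|$ are positive; thus it is an inner product, $\mathcal{H}$ is a weighted $\ell^2$ space, and so $\mathcal{H}$ is complete. By the formula above it restricts to $\ip{\utilde{\Phi}}{\utilde{\Psi}} = |\ip{\Phi}{\Psi}|$ on square-integrable groupoids, giving existence. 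Uniqueness is immediate: the vectors $\utilde{\Phi}$ with $\Phi$ square-integrable span $L^2(X)$ by definition, so any bilinear form taking the prescribed values on such pairs agrees with this one everywhere by bilinearity.

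The remaining, and I expect hardest, step is to show that $L^2(X)$---the space of \emph{finite} linear combinations of square-integrable vectors---is already all of $\mathcal{H}$, so that it inherits completeness. The containment $L^2(X) \subseteq \mathcal{H}$ follows from the first step. For the reverse, given $\phi \in \mathcal{H}$ I would write $\phi = \phi_+ - \phi_-$ with $\phi_\pm \geq 0$ and $\phi_\pm \in \mathcal{H}$, and realize each nonnegative function as $\utilde{\Psi_\pm}$ for a groupoid over $X$: for each $[x]$, choose a disjoint union of one-object groupoids $1 /\!/ (\Z/m_i)$ mapping to $x$ whose cardinalities $\sum_i 1/m_i$ sum to the prescribed nonnegative real $\phi_\pm([x])$, and take the coproduct over $[x]$, using Lemma \ref{addvectors} to see $\utilde{\Psi_\pm} = \phi_\pm$. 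Since $\phi_\pm^2 \leq \phi^2$ pointwise, the first step shows each $\Psi_\pm$ is square-integrable, whence $\phi = \utilde{\Psi_+} - \utilde{\Psi_-} \in L^2(X)$. Thus $L^2(X) = \mathcal{H}$ is a real Hilbert space. The one genuinely nontrivial point to get right here is the realizability of an arbitrary nonnegative real as a groupoid cardinality---an Egyptian-fraction expansion suffices---together with the check that the resulting coproduct is locally finite and essentially small.
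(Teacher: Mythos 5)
Your proposal is correct, and it is organized quite differently from the paper's own proof. The paper handles uniqueness exactly as you do (the vectors $\utilde{\Psi}$ span $L^2(X)$ by definition), but for existence it defines the pairing only on generators, $\ip{\utilde{\Psi}}{\utilde{\Phi}} = |\ip{\Psi}{\Phi}|$, and then must verify that the bilinear extension is well defined---that $\sum_i \alpha_i \utilde{\Psi}_i = 0$ forces $\sum_{i,j}\alpha_i\beta_j\,|\ip{\Psi_i}{\Phi_j}| = 0$---by an argument it only says ``closely resembles'' the proof of Theorem \ref{PROCESS1}; completeness is then left entirely to the reader. You instead prove the closed formula $|\ip{\Phi}{\Psi}| = \sum_{[x]}|\Aut(x)|\,\utilde{\Phi}([x])\,\utilde{\Psi}([x])$, which exhibits the pairing as the restriction of the evident inner product on the weighted $\ell^2$ space $\mathcal{H}$; this makes well-definedness a non-issue, since your form is defined on all functions at once rather than on generators. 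The underlying computation is the same in both proofs---counting morphisms in a weak pullback via Lemma \ref{ALTCARD}, exactly as in Equations \ref{formula1} and \ref{formula2} of the paper---but your packaging buys two real things: existence with no quotienting argument, and an actual proof of completeness via the identification $L^2(X) = \mathcal{H}$, which is the step the paper omits. That identification needs the realizability of any nonnegative function on $\u{X}$ as $\utilde{\Psi}$ for some groupoid $\Psi$ over $X$, a fact the paper itself already invokes in the proof of Theorem \ref{PROCESS1}, so your Egyptian-fraction construction together with the pointwise bound $\phi_\pm^2 \leq \phi^2$ legitimately closes the argument. Two small points deserve explicit mention in a write-up: Lemma \ref{ALTCARD} is stated only for tame groupoids with finite isomorphism classes, so you should note that its proof (a rearrangement of nonnegative terms) remains valid as an identity in $[0,\infty]$, and that isomorphism classes in your skeletalized weak pullback are finite because all automorphism groups are finite; and your formula also silently establishes that square-integrability of $\Psi$ forces tameness of $\Psi$ over $X$ (so that $\utilde{\Psi}$ is defined at all), which is worth stating since the paper's Definition \ref{L2} does not assume it.
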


\begin{proof}
Uniqueness of the inner product follows from the formula, since every vector 
in $L^2(X)$ is a finite-linear combination of vectors
$\utilde{\Psi}$ for square-integrable groupoids $\Psi$ over $X$.
To show the inner product exists, suppose that $\Psi_i, \Phi_i$ are
square-integrable groupoids over $X$ and $\alpha_i, \beta_i \in \R$
for $1 \le i \le n$.  Then we need to check that 
\[ \sum_{i}\alpha_i\utilde{\Psi}_i 
= \sum_{j}\beta_j \utilde{\Phi}_j = 0 \]
implies
\[ \sum_{i,j} \alpha_i\beta_j \, 
 |\langle \Psi_i, \Phi_j \rangle | = 0.\]
The proof here closely resembles the proof of existence in
Theorem \ref{PROCESS1}.  We leave to the reader the task of 
checking that $L^2(X)$ is complete in the norm corresponding 
to this inner product.
\end{proof}

\begin{proposition}\label{innerprod_adjoint}
Given a span
\[
\xymatrix{
& S \ar[dl]_{q} \ar[dr]^{p} & \\
Y & & X
}
\]
and a pair $v \maps \Psi \to X$, $w \maps \Phi \to Y$ of 
groupoids over $X$ and $Y$, respectively, there is an equivalence of groupoids
\[ \langle \Phi,S\Psi \rangle \simeq \langle S^\dagger\Phi,\Psi \rangle. \]
\end{proposition}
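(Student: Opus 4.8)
The plan is to produce an explicit \emph{isomorphism} of groupoids between the two weak pullbacks; since an isomorphism is in particular an equivalence, this suffices, and no tameness, convergence, or cardinality input is needed because the statement lives purely at the level of groupoids. First I would unpack both sides. Writing the span as $p \maps S \to X$, $q \maps S \to Y$, the weak pullback $S\Psi$ has objects $(s,z,\alpha)$ with $s \in S$, $z \in \Psi$ and $\alpha \maps p(s) \to v(z)$ an isomorphism in $X$, and it is regarded as a groupoid over $Y$ via $s \mapsto q(s)$. Hence an object of $\ip{\Phi}{S\Psi}$ --- itself the weak pullback of $\Phi$ and $S\Psi$ over $Y$ --- is a tuple $(\phi, s, z, \alpha, \gamma)$ consisting of $\phi \in \Phi$, the data $(s,z,\alpha)$ above, and an isomorphism $\gamma \maps w(\phi) \to q(s)$ in $Y$. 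Dually, $S^\dagger$ has left leg $p$ and right leg $q$, so $S^\dagger\Phi$ has objects $(s,\phi,\delta)$ with $\delta \maps q(s) \to w(\phi)$, regarded as a groupoid over $X$ via $s \mapsto p(s)$; thus an object of $\ip{S^\dagger\Phi}{\Psi}$ is a tuple $(s,\phi,\delta,z,\epsilon)$ with $\epsilon \maps p(s) \to v(z)$ in $X$.

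Next I would read off the object-level correspondence: the two descriptions carry exactly the same data once we match $\alpha$ with $\epsilon$ and $\gamma$ with $\delta^{-1}$. I would therefore define a functor $F \maps \ip{\Phi}{S\Psi} \to \ip{S^\dagger\Phi}{\Psi}$ on objects by $(\phi,s,z,\alpha,\gamma) \mapsto (s,\phi,\gamma^{-1},z,\alpha)$, with the evident candidate inverse reversing this assignment. A morphism in $\ip{\Phi}{S\Psi}$ amounts to a triple $(a,f,g)$ with $a \maps \phi\to\phi'$ in $\Phi$, $f \maps s\to s'$ in $S$, and $g \maps z\to z'$ in $\Psi$, subject to the compatibility square for $\alpha$ in $X$ and the compatibility square for $\gamma$ in $Y$; I would send it to $(f,a,g)$ and verify that this is a morphism of $\ip{S^\dagger\Phi}{\Psi}$.

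The step I expect to be the crux is matching the two commuting-square conditions living in $Y$. The constraint coming from the pullback defining $\ip{\Phi}{S\Psi}$ is $q(f)\,\gamma = \gamma'\,w(a)$, while the constraint coming from $S^\dagger\Phi$ inside $\ip{S^\dagger\Phi}{\Psi}$ reads $w(a)\,\delta = \delta'\,q(f)$. Substituting $\delta = \gamma^{-1}$, $\delta' = \gamma'^{-1}$ and conjugating --- left-multiplying the first equation by $\gamma'^{-1}$ and right-multiplying by $\gamma^{-1}$ --- yields exactly $\gamma'^{-1}q(f) = w(a)\gamma^{-1}$, so the two conditions are equivalent and $F$ genuinely lands in the target groupoid. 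The square for $\alpha=\epsilon$ in $X$ is literally identical on both sides and requires no manipulation.

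Finally I would confirm functoriality: since $F$ only reorders components and replaces $\gamma$ by $\gamma^{-1}$, it sends identities to identities, and because inversion in $Y$ is compatible with composition in the sense used above, $F$ preserves composition; the same checks applied to the candidate inverse show that the two functors are mutually inverse. This exhibits $F$ as a strict isomorphism, hence in particular an equivalence, giving $\ip{\Phi}{S\Psi} \simeq \ip{S^\dagger\Phi}{\Psi}$ as claimed. The only real bookkeeping subtlety throughout is keeping the direction of each isomorphism straight, which the single inversion $\gamma \mapsto \gamma^{-1}$ accounts for.
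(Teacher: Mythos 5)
Your proof is correct, and it takes a genuinely different route from the paper's. The paper does not build the functor by hand: it regards $v \maps \Psi \to X$ and $w \maps \Phi \to Y$ as spans with one leg over the terminal groupoid $1$, observes that $\ip{\Phi}{S\Psi}$ is then the apex of the composite of the spans $S\Psi$ and $\Phi$, while $\ip{S^\dagger\Phi}{\Psi}$ is the apex of the composite of $S^\dagger\Phi$ and $\Psi$, and invokes associativity of composition of spans (Proposition \ref{associativity}), whose proof was itself an explicit re-bracketing equivalence $T(SR) \to (TS)R$. Unwound, that structural argument yields essentially your functor: your reordering of tuples is the associator, and your single inversion $\gamma \mapsto \gamma^{-1}$ is what implements the leg swap $S \leftrightarrow S^\dagger$ --- equivalently the symmetry $\ip{A}{B} \simeq \ip{B}{A}$ of Proposition \ref{innerprodandadjointprops} together with $(TS)^\dagger \simeq S^\dagger T^\dagger$ (Proposition \ref{adjoint_comp}), both of which the paper's two-line proof uses silently. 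Your approach buys self-containedness and a slightly stronger conclusion (a strict isomorphism of groupoids, not merely an equivalence assembled from cited lemmas), and it isolates the one non-formal point, namely that the two commuting-square conditions in $Y$ match after conjugating by $\gamma$; you are also right that no tameness hypotheses are needed. The paper's approach buys brevity and places the result in the bicategorical picture, where it is an instance of a general fact about composition in $\Span$ rather than a bespoke computation.
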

\begin{proof}
We can consider the groupoids over $X$ and $Y$ as spans with one leg
over the terminal groupoid $1$.  Then the result follows from the
equivalence given by associtativity in Lemma \ref{associativity} and Theorem
\ref{linops_equivspans1}.
Explicitly, $ \langle \Phi,S\Psi \rangle$ is the composite of spans $S\Psi$
and $\Phi$, while $\langle S^\dagger\Phi,\Psi \rangle$ is the composite
of spans $S^\dagger\Phi$ and $\Psi$.
\end{proof}

\begin{proposition}\label{adjoint_comp}
Given spans
\[
\xymatrix{
  & T \ar[dl]_{q_T} \ar[dr]^{p_T}  & & & S \ar[dl]_{q_S} \ar[dr]^{p_S} &\\
 Z & & Y & Y & & X 
}
\]
there is an equivalence of spans
\[(ST)^\dagger \simeq T^\dagger S^\dagger. \]
\end{proposition}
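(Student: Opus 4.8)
The plan is to prove the equivalence by unwinding the two constructions and observing that both sides are assembled from the very same weak pullback, so that no tameness or limiting arguments are needed. The two operations involved interact very simply with the data of a span: forming the adjoint merely interchanges the two legs, leaving the apex and the leg functors themselves untouched, while forming a composite amounts to taking a weak pullback over the shared groupoid $Y$. I would therefore write out the apex and the legs of $(ST)^\dagger$ and of $T^\dagger S^\dagger$ explicitly and check that they coincide up to a canonical isomorphism of groupoids, then invoke Definition \ref{EQUIVALENCE}.

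First I would describe the composite $ST$. Its apex is the weak pullback $P$ of the cospan $T \xrightarrow{p_T} Y \xleftarrow{q_S} S$; an object of $P$ is a triple $(s,t,\gamma)$ with $s \in S$, $t \in T$ and an isomorphism $\gamma \maps q_S(s) \to p_T(t)$ in $Y$, and a morphism is the evident compatible pair of morphisms in $S$ and $T$. Viewed as a span from $X$ to $Z$, its leg to $Z$ is $q_T \pi_T$ and its leg to $X$ is $p_S \pi_S$, where $\pi_S \maps P \to S$ and $\pi_T \maps P \to T$ are the projections. Taking the adjoint exchanges the two legs, so $(ST)^\dagger$ is the groupoid $P$ regarded as a span from $Z$ to $X$ whose leg to $X$ is $p_S \pi_S$ and whose leg to $Z$ is $q_T \pi_T$.

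Next I would form $T^\dagger S^\dagger$. Because $T^\dagger$ and $S^\dagger$ are obtained from $T$ and $S$ by swapping legs, the functors pointing into the shared groupoid $Y$ are still exactly $p_T \maps T \to Y$ and $q_S \maps S \to Y$. Hence the apex $P'$ of $T^\dagger S^\dagger$ is once again a weak pullback of the same cospan $T \xrightarrow{p_T} Y \xleftarrow{q_S} S$, and its outer legs are $p_S \pi_S$ (to $X$) and $q_T \pi_T$ (to $Z$). Comparing with the previous paragraph, $(ST)^\dagger$ and $T^\dagger S^\dagger$ have the same apex and the same pair of leg functors.

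Finally, to make this rigorous I would exhibit the comparison functor $P \to P'$. If the two weak pullbacks are formed by literally the same recipe this is the identity; if the bookkeeping of the weak-pullback definition instead presents an object of $P'$ as $(t,s,\gamma')$ with $\gamma' \maps p_T(t) \to q_S(s)$, then the functor sends $(s,t,\gamma) \mapsto (t,s,\gamma^{-1})$ and acts in the obvious way on morphisms. In either case it is an isomorphism of groupoids that strictly intertwines the leg functors, so taking the natural isomorphisms required in Definition \ref{EQUIVALENCE} to be identities shows that $(ST)^\dagger$ and $T^\dagger S^\dagger$ are equivalent spans. The only point needing genuine care—and hence the sole potential obstacle—is this matching of the two presentations of the weak pullback, namely the order of the entries in each triple and the direction of the connecting isomorphism; once these are aligned there is nothing deeper to verify. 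Equivalently, one may package the entire argument as the symmetry of weak pullbacks (the pullback of $f$ and $g$ is canonically equivalent to that of $g$ and $f$ via swapping and inverting) together with the observation that passing to the adjoint is nothing but a relabelling of the two legs.
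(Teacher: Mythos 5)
Your proof is correct and takes essentially the same approach as the paper, whose entire proof is the one-line remark that the claim is clear from the definition of composition: both $(ST)^\dagger$ and $T^\dagger S^\dagger$ are the weak pullback of the same cospan $T \xrightarrow{p_T} Y \xleftarrow{q_S} S$ with the outer legs interchanged. Your explicit swap-and-invert functor $(s,t,\gamma) \mapsto (t,s,\gamma^{-1})$, strictly intertwining the legs, is precisely the bookkeeping the paper leaves implicit.
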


\begin{proof}
This is clear by the definition of composition.
\end{proof}

\begin{proposition}\label{adjoint_add}
Given spans
\[
\xymatrix{
& S \ar[dl]_{q_S} \ar[dr]^{p_S} & & & T \ar[dl]_{q_T} \ar[dr]^{p_T} & \\
Y & & X & Y & & X
}
\]
there is an equivalence of spans
\[(S+T)^\dagger \simeq S^\dagger + T^\dagger. \]
\end{proposition}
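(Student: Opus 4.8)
The plan is to unwind the definitions of adjoint and of sum of spans and then observe that the two sides are in fact the \emph{same} span, so that the required equivalence is witnessed by the identity functor on the common apex. This mirrors the one-line proof of the analogous Proposition \ref{adjoint_comp}, and I expect it to be equally short once the legs are tracked carefully.

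First I would write $(S+T)$ out explicitly. By the definition of the sum of spans, its apex is the coproduct $S+T$, its right leg $[p_S,p_T] \maps S+T \to X$ is the unique functor (by the universal property of the coproduct) restricting to $p_S$ on $S$ and to $p_T$ on $T$, and its left leg $[q_S,q_T] \maps S+T \to Y$ is the unique functor restricting to $q_S$ and $q_T$. Taking the adjoint swaps the two legs while leaving the apex untouched, so $(S+T)^\dagger$ is the span from $Y$ to $X$ with apex $S+T$, left leg $[p_S,p_T]\maps S+T \to X$, and right leg $[q_S,q_T]\maps S+T \to Y$.

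Next I would compute $S^\dagger + T^\dagger$ directly. Here $S^\dagger$ has apex $S$, left leg $p_S\maps S\to X$ and right leg $q_S\maps S\to Y$, and likewise for $T^\dagger$. Summing these spans from $Y$ to $X$ again produces the coproduct $S+T$ as apex; its left leg to $X$ is the functor induced by the left legs $p_S,p_T$, namely $[p_S,p_T]$, and its right leg to $Y$ is the functor induced by the right legs $q_S,q_T$, namely $[q_S,q_T]$.

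Comparing the two descriptions, the apexes agree as the very same coproduct $S+T$, and the corresponding legs agree on the nose, because the universal map out of a coproduct depends only on the functors being glued and not on whether they have been relabelled as `source' or `target' by the adjoint. Hence $(S+T)^\dagger$ and $S^\dagger + T^\dagger$ coincide, and the identity functor on $S+T$ (together with identity natural isomorphisms) exhibits the equivalence of spans. The only point demanding care is the bookkeeping of which leg plays which role after applying $(\,\cdot\,)^\dagger$; there is no genuine obstacle beyond this, which is why the argument collapses to an essentially immediate verification.
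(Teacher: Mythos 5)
Your proof is correct and is essentially the paper's own argument: the paper's proof simply notes that addition of spans is given by coproduct of groupoids and that this construction is symmetric under swapping the legs, which is exactly what your explicit computation with $[p_S,p_T]$ and $[q_S,q_T]$ verifies. The two sides are indeed the same span on the nose, so the identity functor (with identity natural isomorphisms) gives the required equivalence.
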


\begin{proof}
This is clear since the addition of spans is given by 
coproduct of groupoids.  This construction is symmetric
with respect to swapping the legs of the span.
\end{proof}

\begin{proposition}
\label{innerprodandadjointprops}
Given a groupoid $\Lambda$ and square-integrable
groupoids $\Phi$, $\Psi$, and $\Psi'$ over $X$, we have the
following equivalences of groupoids:
\begin{enumerate}
\item
\[\ip{\Phi}{\Psi} \simeq \ip{\Psi}{\Phi}.\] 
\item
\[\ip{\Phi}{\Psi + \Psi'} \simeq \ip{\Phi}{\Psi} + \ip{\Phi}{\Psi'}.\] 
\item
\[\ip{\Phi}{\Lambda \times \Psi} \simeq \Lambda \times \ip{\Phi}{\Psi}.\]
\end{enumerate}
\end{proposition}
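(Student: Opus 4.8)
The plan is to prove the three equivalences separately, handling symmetry by an explicit isomorphism and the other two by reducing to the already-established distributivity of span application over coproducts and scalars (Lemma \ref{linearity}). Throughout, recall that an object of the weak pullback $\ip{\Phi}{\Psi}$ is a triple $(\phi,\psi,\alpha)$ with $\phi$ an object of $\Phi$, $\psi$ an object of $\Psi$, and $\alpha \maps w(\phi) \to v(\psi)$ an isomorphism in $X$, where $w \maps \Phi \to X$ and $v \maps \Psi \to X$ are the structure functors, while a morphism is a compatible pair of morphisms of $\Phi$ and $\Psi$.

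For part 1, I would define a functor $F \maps \ip{\Phi}{\Psi} \to \ip{\Psi}{\Phi}$ on objects by $(\phi,\psi,\alpha) \mapsto (\psi,\phi,\alpha^{-1})$, using that every morphism of the groupoid $X$ is invertible. On a morphism given by a pair $(f,g)$ I would send it to $(g,f)$; the commuting square required in $\ip{\Psi}{\Phi}$ is precisely the inverse of the square that held in $\ip{\Phi}{\Psi}$, so $F$ is well defined on morphisms. The same recipe in the other direction is a strict inverse, so $F$ is in fact an isomorphism of groupoids, which is a fortiori an equivalence.

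For parts 2 and 3, the key observation is that the inner product is a disguised span composite. Regarding a groupoid over $X$, say $v \maps \Psi \to X$, as a span from $1$ to $X$ (left leg $v$, right leg the unique functor to the terminal groupoid $1$), the adjoint $\Phi^\dagger$ becomes a span from $X$ to $1$ whose right leg is $w$, and the weak pullback defining the composite $\Phi^\dagger\Psi$ is then taken over $X$ against exactly the legs $w$ and $v$. Hence $\ip{\Phi}{\Psi} \simeq \Phi^\dagger\Psi$, and more usefully the operation $\ip{\Phi}{-}$ on groupoids over $X$ coincides with application of the fixed span $S = \Phi^\dagger$. Now part 2 is Lemma \ref{linearity}(1) applied to $S$, namely $S(\Psi + \Psi') \simeq S\Psi + S\Psi'$, and part 3 is Lemma \ref{linearity}(2), namely $S(\Lambda \times \Psi) \simeq \Lambda \times S\Psi$. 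Since Lemma \ref{linearity} assumes no tameness or square-integrability, these equivalences in fact hold for arbitrary groupoids over $X$.

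The main obstacle, such as it is, lies in pinning down the identification $\ip{\Phi}{\Psi} \simeq \Phi^\dagger\Psi$ carefully: one must match the leg conventions of the adjoint span with the two projections of the inner-product weak pullback and check that the triples $(\phi,\psi,\alpha)$ and their morphisms correspond on the nose. Once this bookkeeping is done, the two distributivity statements follow immediately from Lemma \ref{linearity}. If one prefers to avoid the span reformulation, parts 2 and 3 can instead be proved directly by the same tuple-sorting and tuple-reordering functors used in the proof of Lemma \ref{linearity}; I would nonetheless present the reduction to Lemma \ref{linearity} as the cleaner route.
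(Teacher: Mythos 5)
Your proof is correct, and for parts 2 and 3 it takes a genuinely different route from the paper's. Part 1 is essentially identical: the paper also constructs the evident equivalence $(a,b,\alpha)\mapsto(b,a,\alpha^{-1})$ using invertibility of morphisms in $X$. For parts 2 and 3, however, the paper argues directly from the definition of weak pullback: since the coproduct of groupoids is disjoint union on objects and morphisms, the weak pullback visibly splits over it, and the scalar case follows from associativity (up to isomorphism) of the cartesian product. You instead observe that $\ip{\Phi}{-}$ coincides with application of the fixed span $\Phi^\dagger$ from $X$ to $1$, and then invoke Lemma \ref{linearity}. This identification is sound---indeed the paper itself uses exactly this reading of the inner product as a span composite in its proof of Proposition \ref{innerprod_adjoint}---and Lemma \ref{linearity} carries no tameness hypotheses, so your remark that square-integrability plays no role in these equivalences is also correct (as it is for the paper's direct argument). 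What your route buys is economy and a structural explanation: distributivity of the inner product is exposed as a special case of distributivity of span application, rather than re-proved by hand via the same tuple-sorting that proves Lemma \ref{linearity}. What the paper's route buys is self-containedness: its one-line arguments need only the weak pullback definition, whereas your reduction hinges on matching the legs of the adjoint span with the two projections of the inner-product pullback---the bookkeeping you rightly flag as the only delicate point.
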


\begin{proof}
Each part will follow easily from the definition of weak
pullback. First we label the maps for the groupoids over $X$ as
$v\maps\Phi\to X$, $w\maps\Psi\to X$, and $w' \maps\Psi' \to X$.

\begin{enumerate}
\item $\ip{\Phi}{\Psi} \simeq \ip{\Psi}{\Phi}.$\\ 
By definition of
weak pullback, an object of $\ip{\Phi}{\Psi}$ is a triple
$(a,b,\alpha)$ such that $a\in\Phi, b\in\Psi,$ and $\alpha\maps
v(a)\to w(b)$.  Similarly, an object of $\ip{\Psi}{\Phi}$ is a triple
$(b,a,\beta)$ such that $b\in\Psi, a\in\Phi,$ and $\beta\maps w(b) \to
v(a)$.  Since $\alpha$ is invertible, there is an evident equivalence
of groupoids.

\item $\ip{\Phi}{\Psi + \Psi'} \simeq \ip{\Phi}{\Psi} + \ip{\Phi}{\Psi'}.$\\
Recall that in the category of groupoids, the coproduct is just the
disjoint union over objects and morphisms. With this it is easy to
see that the definition of weak pullback will `split' over union. 

\item $\ip{\Phi}{\Lambda \times \Psi} \simeq \Lambda \times \ip{\Phi}{\Psi}.$\\
This follows from the associativity (up to isomorphism) of the 
cartesian product.
\end{enumerate}

\end{proof}

\subsubsection*{Acknowledgements}

We thank James Dolan, Todd Trimble, and the denizens of the
$n$-Category Caf\'e, especially Urs Schreiber and Bruce Bartlett, for
many helpful conversations.  AH was supported by the National Science
Foundation under Grant No.\ 0653646.  CW was supported by an FQXi
grant.

\appendix
\section{Review of Groupoids}\label{appendix}

\begin{definition}
A {\bf groupoid} is a category in which all morphisms are invertible.
\end{definition}

\begin{definition}
We denote the set of objects in a groupoid $X$ by $\Ob(X)$ and the
set of morphisms by $\Mor(X)$.
\end{definition}

\begin{definition}
A {\bf functor} $F \maps X \to Y$ between categories is a pair of
functions $F \maps \Ob(X) \to \Ob(Y)$ and $F \maps \Mor(X) \to
\Mor(Y)$ such that $F(1_x) = 1_{F(x)}$ for $x\in \Ob(X)$ and $F(gh) =
F(g)F(h)$ for $g,h\in \Mor(X)$.
\end{definition}

\begin{definition}
A {\bf natural transformation} $\alpha \maps F \To G$ between 
functors $F,G \maps X \to Y$ consists of a morphism $\alpha_x \maps F(x)
\to G(x)$ in $\Mor(Y)$ for each $x\in \Ob(X)$ such that for each
morphism $h\maps x \to x'$ in $\Mor(X)$ the following naturality
square commutes:
\[
\xymatrix{
F(x) \ar[r]^{\alpha_x} \ar[d]_{F(h)} & G(x) \ar[d]^{G(h)} \\
F(x') \ar[r]_{\alpha_{x'}} & G(x')
}
\]
\end{definition}

\begin{definition}
A {\bf natural isomorphism} is a natural transformation $\alpha \maps
F \To G$ between functors $F,G \maps X \to Y$ such that for each $x
\in X$, the morphism $\alpha_x$ is invertible.
\end{definition}
\noindent
Note that a natural transformation between functors between
{\it groupoids} is necessarily a natural isomorphism.  

In what follows, and throughout the paper, we write $x \in X$ as
shorthand for $x \in \Ob(X)$.  Also, several places throughout this
paper we have used the notation $\alpha\cdot F$ or $F\cdot\alpha$ to
denote operations combining a functor $F$ and a natural transformation
$\alpha$.  These operations are called `whiskering':

\begin{definition}
\label{whiskering}
Given groupoids $X$, $Y$ and $Z$, functors $F\maps X\to Y$, $G\maps
Y\to Z$ and $H\maps Y\to Z$, and a natural transformation $\alpha\maps
G \To H$, there is a natural transformation $\alpha\cdot F\maps
GF \To HF$ called the {\bf right whiskering} of $\alpha$ by
$F$.  This assigns to any object $x \in X$ the morphism
$\alpha_{F(x)}\maps G(F(x)) \to H(F(x))$ in $Z$, which we denote as
$(\alpha\cdot F)_{x}$.  Similarly, given a groupoid $W$ and a functor 
$J \maps Z \to W$, there is a natural transformation $J\cdot \alpha\maps
JG \To JH$ called the {\bf left whiskering} of $\alpha$ by $J$.  This 
assigns to any object $y \in Y$ the morphism $J(\alpha_y)\maps JG(y) 
\to JH(y)$ in $W$, which we denote as $(J\cdot\alpha)_{y}$.
\end{definition}

\begin{definition}
\label{equivalence_of_groupoids}
A functor $F \maps X \to Y$ between groupoids is called an {\bf
equivalence} if there exists a functor $G \maps Y \to X$, called the
{\bf weak inverse} of $F$, and natural isomorphisms $\eta \maps GF \To
1_X$ and $\rho \maps FG \To 1_Y$.  In this case we say $X$ and $Y$ are
{\bf equivalent}.
\end{definition}

\begin{definition}
A functor $F \maps X \to Y$ between groupoids is called {\bf faithful}
if for each pair of objects $x,y \in X$ the function $F \maps
\hom(x,y) \to \hom(F(x),F(y))$ is injective.
\end{definition}

\begin{definition}
A functor $F \maps X \to Y$ between groupoids is called {\bf full} if
for each pair of objects $x,y \in X$, the function $F \maps
\hom(x,y) \to \hom(F(x),F(y))$ is surjective.
\end{definition}

\begin{definition}
A functor $F \maps X \to Y$ between groupoids is called {\bf
essentially surjective} if for each object $y \in Y$, there
exists an object $x \in X$ and a morphism $f \maps F(x) \to y$ in
$Y$.
\end{definition}

\noindent
A functor has all three of the above properties if and only if the functor
is an equivalence.  It is often convenient to prove two groupoids are
equivalent by exhibiting a functor which is full, faithful and
essentially surjective.

\begin{definition}\label{MAP}
A {\bf map} from the span of groupoids
\[
\xymatrix{
& S\ar[dl]_{q} \ar[dr]^{p}   \\
Y & & X 
}
\]
to the span of groupoids
\[
\xymatrix{
& S'\ar[dl]_{q'} \ar[dr]^{p'}   \\
Y & & X 
}
\]
is a functor $F \maps S \to S'$ together with natural transformations
$\alpha \maps p \To p' F$, $\beta \maps q \To q' F$.
\end{definition}

\begin{definition}\label{EQUIVALENCE}
An {\bf equivalence} of spans of groupoids
\[
\xymatrix{
& S\ar[dl]_{q} \ar[dr]^{p} &  & & S'\ar[dl]_{q'} \ar[dr]^{p'} &\\
Y & & X & Y & & X
}
\]
is a map of spans $(F,\alpha,\beta)$ from $S$ to $S'$ together with a
map of spans $(G,\alpha',\beta')$ from $S'$ to $S$ and
natural isomorphisms $\gamma \maps GF \Rightarrow 1$ and $\gamma'
\maps FG \Rightarrow 1$ such that the following equations hold:
\[   \begin{array}{lll}
 1_p = (p \cdot \gamma)(\alpha'\cdot F) \alpha & &
 1_q = (q \cdot \gamma)(\beta'\cdot F) \beta \\
 1_{p'} = (p' \cdot \gamma')(\alpha\cdot G) \alpha' & &
 1_{q'} = (q' \cdot \gamma')(\beta\cdot G) \beta' .
\end{array}
\]
\end{definition}

\begin{lemma}\label{EQUIVGRPD}
Given equivalent groupoids $X$ and $Y$, $|X| = |Y|$.
\end{lemma}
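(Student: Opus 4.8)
The plan is to show that an equivalence $F \maps X \to Y$ sets up a bijection between the isomorphism classes of the two groupoids that preserves the size of automorphism groups, so that the two sums defining $|X|$ and $|Y|$ agree term by term.

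First I would invoke the standard fact, recalled in this appendix, that a functor between groupoids is an equivalence if and only if it is full, faithful, and essentially surjective; so I may take $F \maps X \to Y$ to be such a functor. The first step is to produce a well-defined map $\u{F} \maps \u{X} \to \u{Y}$ sending $[x]$ to $[F(x)]$. This is well-defined because functors preserve isomorphisms, so $x \iso x'$ implies $F(x) \iso F(x')$. Essential surjectivity makes $\u{F}$ surjective: every $y \in Y$ admits a morphism $F(x) \to y$, hence $[y] = [F(x)]$. For injectivity, suppose $F(x) \iso F(x')$; fullness produces a morphism $x \to x'$, which is automatically invertible since $X$ is a groupoid, so $[x] = [x']$. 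Thus $\u{F}$ is a bijection.

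Next I would compare automorphism groups. Because $F$ is full and faithful, the function $F \maps \hom(x,x) \to \hom(F(x),F(x))$ is a bijection, and functoriality ($F(gh) = F(g)F(h)$ and $F(1_x) = 1_{F(x)}$) makes it a group isomorphism $\Aut(x) \iso \Aut(F(x))$. In particular $|\Aut(x)| = |\Aut(F(x))|$ for every object $x \in X$.

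Finally I would assemble these facts. Reindexing the sum defining $|Y|$ along the bijection $\u{F}$, and using $|\Aut(x)| = |\Aut(F(x))|$, gives
\[ |X| = \sum_{[x] \in \u{X}} \frac{1}{|\Aut(x)|} = \sum_{[x] \in \u{X}} \frac{1}{|\Aut(F(x))|} = \sum_{[y] \in \u{Y}} \frac{1}{|\Aut(y)|} = |Y|. \]
Since the two series have identical terms, they converge to the same value, or both diverge; either way $|X| = |Y|$. I do not expect any serious obstacle here: the content is entirely bookkeeping once the three defining properties of an equivalence are invoked. The one point deserving care is the well-definedness and bijectivity of $\u{F}$, where one must combine fullness with the fact that every morphism in a groupoid is invertible.
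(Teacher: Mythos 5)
Your proposal is correct and follows essentially the same route as the paper's own proof: both establish that an equivalence induces a bijection $\u{F} \maps \u{X} \to \u{Y}$ on isomorphism classes and preserves the cardinality of automorphism groups (via fullness and faithfulness), then compare the two defining sums term by term. Your write-up simply fills in more of the bookkeeping details (well-definedness, injectivity, and surjectivity of $\u{F}$) that the paper leaves implicit.
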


\begin{proof}
From a functor $F \maps X \to Y$ between groupoids, we can
obtain a function $\u{F} \maps \u{X} \to 
\u{Y}$.  If $F$ is an equivalence, $\u{F}$
is a bijection.  Since these are the indexing sets for the sum 
in the definition of groupoid cardinality, we just need to check 
that for a pair of elements $[x] \in \u{X}$ and 
$[y] \in \u{Y}$ such that $\u{F}([x]) = [y]$, we
have $|\Aut(x)| = |\Aut(y)|$.  This follows from $F$ being full 
and faithful, and that the cardinality of automorphism
groups is an invariant of an isomorphism class of objects in a
groupoid.  Thus,
\[ |X| = 
\sum_{x \in \u{X}}\frac{1}{|\Aut(x)|} = 
\sum_{y \in \u{Y}}\frac{1}{|\Aut(y)|} = 
|Y|. \]
\end{proof}

\begin{lemma}\label{ESSENTIALPULLBACK}
Given a diagram of groupoids
\[ \def\objectstyle{\scriptstyle}
  \def\labelstyle{\scriptstyle}
   \xy
   (-20,10)*+{S}="1";
   (0,-10)*+{B}="2";
   (20,10)*+{T}="3";
        {\ar_{p} "1";"2"};
        {\ar^{q} "3";"2"};
        {\ar^{F} "1";"3"};
        {\ar@{=>}_<<{\scriptstyle \alpha} (-2,0); (2,4)};
\endxy
\]
where $F$ is an equivalence of groupoids, the
restriction of $F$ to the full inverse image $p^{-1}(b)$
\[ F|_{p^{-1}(b)} \maps p^{-1}(b) \to q^{-1}(b) \]
is an equivalence of groupoids, for any object $b\in B$.
\end{lemma}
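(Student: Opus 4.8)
The plan is to verify that $F|_{p^{-1}(b)}$ is an equivalence by checking it is full, faithful, and essentially surjective, invoking the criterion recorded earlier in this appendix that a functor between groupoids is an equivalence exactly when it has these three properties. Before that, I would first confirm the restriction is even well defined as a functor $p^{-1}(b) \to q^{-1}(b)$. The natural transformation $\alpha \maps p \To qF$ is the tool here: it is automatically a natural isomorphism since $B$ is a groupoid, so for any object $s$ of $S$ we get an isomorphism $\alpha_s \maps p(s) \to q(F(s))$. Hence if $s \in p^{-1}(b)$, meaning $p(s) \iso b$, then $q(F(s)) \iso p(s) \iso b$, so $F(s)$ lies in $q^{-1}(b)$. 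On morphisms there is nothing to check, since $p^{-1}(b)$ and $q^{-1}(b)$ are \emph{full} subgroupoids of $S$ and $T$, so $F$ automatically carries a morphism of $p^{-1}(b)$ to a morphism of $q^{-1}(b)$.

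Fullness and faithfulness would then come essentially for free. For objects $s,s' \in p^{-1}(b)$, the definition of full inverse image gives $\hom_{p^{-1}(b)}(s,s') = \hom_S(s,s')$ and $\hom_{q^{-1}(b)}(F(s),F(s')) = \hom_T(F(s),F(s'))$. Thus the map on hom-sets induced by $F|_{p^{-1}(b)}$ is literally the map induced by $F$, which is a bijection because $F$, being an equivalence, is full and faithful.

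The essential surjectivity is the one step with genuine content, and it is where the natural isomorphism $\alpha$ does the real work; I expect this to be the main obstacle, in that one must check the fiber condition is respected. Given an object $t \in q^{-1}(b)$, so $q(t) \iso b$, I would use essential surjectivity of $F$ to produce an object $s \in S$ together with an isomorphism $F(s) \to t$ in $T$. The point is to verify that this $s$ actually lands in $p^{-1}(b)$: applying $q$ to the isomorphism $F(s) \to t$ yields $q(F(s)) \iso q(t) \iso b$, and then $\alpha_s$ gives $p(s) \iso q(F(s)) \iso b$, so indeed $s \in p^{-1}(b)$. Since the isomorphism $F(s) \to t$ lives inside the full subgroupoid $q^{-1}(b)$, this exhibits $t$ as isomorphic to an object in the image of $F|_{p^{-1}(b)}$, establishing essential surjectivity. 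Having full, faithful, and essentially surjective in hand, I would conclude that $F|_{p^{-1}(b)} \maps p^{-1}(b) \to q^{-1}(b)$ is an equivalence of groupoids, as claimed.
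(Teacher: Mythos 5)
Your proposal is correct and follows essentially the same route as the paper's own proof: both verify that the restriction lands in $q^{-1}(b)$ using the natural isomorphism $\alpha$, obtain fullness and faithfulness from the fact that full inverse images are full subgroupoids, and prove essential surjectivity by combining essential surjectivity of $F$ with $\alpha$ to show the preimage object lies in $p^{-1}(b)$. If anything, your argument is slightly more careful than the paper's, since you explicitly note that the isomorphism $F(s) \to t$ lives inside the full subgroupoid $q^{-1}(b)$, a point the paper leaves implicit.
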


\begin{proof}
It is sufficient to check that $F|_{p^{-1}(b)}$ is a full, faithful,
and essentially surjective functor from $p^{-1}(b)$ to $q^{-1}(b)$.
First we check that the image of $F|_{p^{-1}(b)}$ indeed lies in
$q^{-1}(b)$.  Given $b \in B$ and $x \in p^{-1}(b)$, there is
a morphism $\alpha_x \maps p(x) \to qF(x)$ in $B$.  Since $p(x) \in
[b]$, then $qF(x) \in [b]$.  It follows that $F(x) \in
q^{-1}(b)$.  Next we check that $F|_{p^{-1}(b)}$ is full and
faithful.  This follows from the fact that full inverse images are
full subgroupoids.  It is clear that a full and faithful functor
restricted to a full subgroupoid will again be full and faithful.  We
are left to check only that $F|_{p^{-1}(b)}$ is essentially
surjective.  Let $y \in q^{-1}(b)$.  Then, since $F$ is
essentially surjective, there exists $x \in S$ such that $F(x)
\in [y]$.  Since $qF(x) \in [b]$ and there is an isomorphism $\alpha_x
\maps p(x) \to qF(x)$, it follows that $x \in q^{-1}(b)$.  So
$F|_{p^{-1}(b)}$ is essentially surjective.  We have shown that
$F|_{p^{-1}(b)}$ is full, faithful, and essentially surjective, and,
thus, is an equivalence of groupoids.
\end{proof}

The data needed to construct a weak pullback of groupoids is a
`cospan':

\begin{definition}
Given groupoids $X$ and $Y$, a {\bf cospan} from $X$ to $Y$ is a diagram
\[\xymatrix{
Y\ar[dr]_g &  & X\ar[dl]^f\\
  & Z &  \\
}\]
where $Z$ is groupoid and $f\maps X\to Z$ and $g \maps Y\to Z$ are functors.
\end{definition}

\noindent
We next prove a lemma stating that the weak pullbacks of equivalent
cospans are equivalent.  Weak pullbacks, also called {\it iso-comma
objects}, are part of a much larger family of limits called {\it
flexible limits}.  To read more about flexible limits, see the work of
Street \cite{Street:1980} and Bird \cite{Bird:1989}.  A vastly more
general theorem than the one we intend to prove holds in this class of
limits.  Namely: for any pair of parallel functors $F,G$ from an
indexing category to $\Cat$ with a pseudonatural equivalence
$\eta\maps F \To G$, the pseudo-limits of $F$ and $G$ are equivalent.
But to make the paper self-contained, we strip this theorem down and
give a hands-on proof of the case we need.

To show that equivalent cospans of groupoids have equivalent weak
pullbacks, we need to say what it means for a pair of cospans to be
equivalent.  As stated above, this means that they are given by a pair
of parallel functors $F,G$ from the category consisting of a
three-element set of objects $\lbrace 1,2,3 \rbrace$ and two morphisms
$a\maps 1 \to 3$ and $b \maps 2 \to 3$.  Further there is a
pseudonatural equivalence $\eta \maps F \to G$.  In simpler terms,
this means that we have equivalences $\eta_i \maps F(i) \to G(i)$ for 
$i = 1,2,3$, and squares commuting up to natural isomorphism:
\[
 \def\objectstyle{\scriptstyle}
  \def\labelstyle{\scriptstyle}
   \xy
   (-30,10)*+{F(1)}="1";
   (-30,-10)*+{F(3)}="2";
   (-10,10)*+{G(1)}="3";
   (-10,-10)*+{G(3)}="4";
   (10,10)*+{F(1)}="5";
   (10,-10)*+{F(3)}="6";
   (30,10)*+{G(1)}="7";
   (30,-10)*+{G(3)}="8";   
        {\ar_{\eta_1} "1";"2"};
        {\ar^{F(a)} "1";"3"};
        {\ar^{\eta_3} "3";"4"};
        {\ar_{G(a)} "2";"4"};
        {\ar_{\eta_2} "5";"6"};
        {\ar^{F(b)} "5";"7"};
        {\ar^{\eta_3} "7";"8"};
        {\ar_{G(b)} "6";"8"};
        {\ar@{=>}^{v} (-23,-3)*{}; (-17,3)*{}}; 
        {\ar@{=>}^{w} (17,-3)*{}; (23,3)*{}}; 
\endxy
\]

For ease of notation we will consider the equivalent cospans:
\[
\xymatrix{
Y\ar[dr]_{g} && X\ar[dl]^{f} & \hat{Y}\ar[dr]_{\hat{g}} && \hat{X}\ar[dl]^{\hat{f}} \\
&Z& & &\hat{Z}&
}
\]
with equivalences $\hat{x}\maps X \to \hat{X}$, $\hat{y}\maps Y \to
\hat{Y}$, and $\hat{z}\maps Z \to \hat{Z}$ and natural isomorphisms
$v\maps \hat{z}f \Rightarrow \hat{f}\hat{x}$ and $w \maps \hat{z}g
\Rightarrow \hat{g}\hat{y}$.

\begin{lemma}\label{skeletal}
Given equivalent cospans of groupoids as described above, the weak
pullback of the cospan
\[
\xymatrix{
Y\ar[dr]_{g} & & X\ar[dl]^{f} \\
& Z & 
}
\]
is equivalent to the weak pullback of the cospan
\[
\xymatrix{
\hat{Y}\ar[dr]_{\hat{g}} & & \hat{X}\ar[dl]^{\hat{f}} \\
& \hat{Z} & 
}
\]
\end{lemma}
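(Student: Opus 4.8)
The plan is to construct an explicit equivalence $F \maps P \to \hat{P}$ between the two weak pullbacks and prove it is full, faithful, and essentially surjective, exactly the strategy used in Lemma \ref{ESSENTIALPULLBACK}. Recall that an object of the weak pullback $P$ of the first cospan is a triple $(s,t,\alpha)$ with $s \in X$, $t \in Y$, and an isomorphism $\alpha \maps f(s) \to g(t)$ in $Z$, with morphisms $(\phi,\psi)$ the commuting squares $g(\psi)\circ\alpha = \alpha'\circ f(\phi)$; and similarly for $\hat{P}$. I would define $F$ on objects by sending $(s,t,\alpha)$ to $(\hat{x}(s),\hat{y}(t),\hat{\alpha})$, where the transported structural isomorphism is obtained by conjugating $\hat{z}(\alpha)$ with the given natural isomorphisms:
\[ \hat{\alpha} = w_t \circ \hat{z}(\alpha) \circ v_s^{-1} \maps \hat{f}(\hat{x}(s)) \to \hat{g}(\hat{y}(t)). \]
On a morphism $(\phi,\psi)$ I would set $F(\phi,\psi)=(\hat{x}(\phi),\hat{y}(\psi))$; verifying that this is a morphism of $\hat{P}$ is a direct application of the naturality of $v$ and $w$ together with the square witnessing $(\phi,\psi)$ as a morphism of $P$.

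Faithfulness of $F$ is immediate from the faithfulness of $\hat{x}$ and $\hat{y}$. For fullness, given a morphism $(\hat{\phi},\hat{\psi})$ of $\hat{P}$ between images of objects of $P$, I would use fullness of $\hat{x}$ and $\hat{y}$ to lift $\hat{\phi},\hat{\psi}$ to morphisms $\phi \maps s\to s'$ in $X$ and $\psi \maps t\to t'$ in $Y$. The only real content is checking that the lifted pair satisfies the commuting square required for a morphism of $P$. This follows by applying $\hat{z}$ to the desired identity $g(\psi)\circ\alpha=\alpha'\circ f(\phi)$, rewriting the two sides via the naturality squares $\hat{g}(\hat{\psi})\circ w_t = w_{t'}\circ\hat{z}(g(\psi))$ and $\hat{f}(\hat{\phi})\circ v_s = v_{s'}\circ\hat{z}(f(\phi))$, and then invoking the commuting square of $\hat{P}$; after cancelling the isomorphisms $w_{t'}$ and $v_s$ one is left with $\hat{z}(g(\psi)\circ\alpha)=\hat{z}(\alpha'\circ f(\phi))$, and faithfulness of $\hat{z}$ lets us strip $\hat{z}$ to conclude the identity already holds in $Z$.

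Essential surjectivity is where the main work lies. Given an object $(\hat{s},\hat{t},\hat{\alpha})$ of $\hat{P}$, essential surjectivity of $\hat{x}$ and $\hat{y}$ supplies objects $s\in X$, $t\in Y$ together with isomorphisms $\mu \maps \hat{x}(s)\to\hat{s}$ and $\nu \maps \hat{y}(t)\to\hat{t}$. The subtle point is to manufacture the missing structural isomorphism $\alpha \maps f(s)\to g(t)$ so that $(\mu,\nu)$ becomes an isomorphism $F(s,t,\alpha)\to(\hat{s},\hat{t},\hat{\alpha})$ in $\hat{P}$. Writing out the square this demands forces
\[ \hat{z}(\alpha) = w_t^{-1} \circ \hat{g}(\nu)^{-1} \circ \hat{\alpha} \circ \hat{f}(\mu) \circ v_s, \]
which is a well-typed isomorphism $\hat{z}(f(s))\to\hat{z}(g(t))$ in $\hat{Z}$. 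Here I would use that $\hat{z}$ is \emph{full} to lift the right-hand side to a morphism $\alpha$ in $Z$ (automatically invertible, since $Z$ is a groupoid), yielding the required preimage object of $P$. The main obstacle is therefore not conceptual but one of bookkeeping: one must keep the domains and codomains of $v_s$, $w_t$, $\mu$, $\nu$ straight throughout, and this step genuinely relies on $\hat{z}$ being full rather than merely faithful, so that the transported structural isomorphism can be pulled back into $Z$. Having established that $F$ is full, faithful, and essentially surjective, I would conclude by the standard characterization of equivalences recalled in Appendix \ref{appendix} (see Definition \ref{equivalence_of_groupoids}) that $P$ and $\hat{P}$ are equivalent.
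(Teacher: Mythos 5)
Your proposal is correct and takes essentially the same approach as the paper's own proof: the same functor between weak pullbacks given by conjugating $\hat{z}(\alpha)$ with the natural isomorphisms $v,w$, with faithfulness and fullness reduced to those of $\hat{x},\hat{y}$ (plus naturality of $v,w$ and faithfulness of $\hat{z}$), and essential surjectivity obtained by lifting the transported structural isomorphism through fullness of $\hat{z}$. The only differences are cosmetic: your composites consistently respect the stated directions $v \maps \hat{z}f \To \hat{f}\hat{x}$, $w \maps \hat{z}g \To \hat{g}\hat{y}$ (the paper's displayed formulas tacitly swap $v,w$ with their inverses), and you leave functoriality of $F$ implicit, which the paper likewise dismisses as straightforward.
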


\begin{proof}
We construct a functor $F$ between the weak
pullbacks $XY$ and $\hat{X}\hat{Y}$ and show that this functor is an
equivalence of groupoids, i.e., that it is full, faithful and
essentially surjective.  We recall that an object in the weak pullback
$XY$ is a triple $(r,s,\alpha)$ with $r\in X$, $s\in Y$ and
$\alpha \maps f(r) \to g(s)$.  A morphism in
$\rho \maps (r,s,\alpha) \to (r',s',\alpha')$ in $XY$ is given by a
pair of morphisms $j \maps r\to r'$ in $X$ and $k\maps s\to s'$ in $Y$
such that $g(k)\alpha = \alpha' f(j)$.  We define
\[F \maps XY \to \hat{X}\hat{Y}\]
on objects by
\[(r,s,\alpha) \mapsto (\hat{x}(r),\hat{y}(s),w_s^{-1}\hat{z}(\alpha)v_r)\]
and on a morphism $\rho$ by sending $j$ to $\hat{x}(j)$ and $k$ to
$\hat{y}(k)$.  To check that this functor is well-defined we consider
the following diagram:
\[
\xymatrix{
\hat{f}\hat{x}(r)\ar[r]^{v_r}\ar[d]_{\hat{f}\hat{x}(j)} & \hat{z}f(r)\ar[r]^{\hat{z}(\alpha)}\ar[d]_{\hat{z}f(j)} & \hat{z}g(s)\ar[r]^{w_s^{-1}}\ar[d]^{\hat{z}g(k)} & \hat{g}\hat{y}(s)\ar[d]^{\hat{g}\hat{y}(k)}\\
\hat{f}\hat{x}(r')\ar[r]_{v_{r'}} & \hat{z}f(r')\ar[r]_{\hat{z}(\alpha')} & \hat{z}g(s')\ar[r]_{w_{s'}^{-1}} & \hat{g}\hat{y}(s')
}
\]
The inner square commutes by the assumption that $\rho$ is a morphism
in $XY$.  The outer squares commute by the naturality of $v$ and $w$.
Showing that $F$ respects identities and composition is
straightforward.

We first check that $F$ is faithful.  Let $\rho,\sigma \maps
(r,s,\alpha) \to (r',s',\alpha')$ be morphisms in $XY$ such that
$F(\rho) = F(\sigma)$.  Assume $\rho$ consists of
morphisms $j\maps r \to r'$, $k\maps s \to s'$ and $\sigma$ consists
of morphisms $l\maps r \to r'$ and $m \maps s \to s'$.  It follows
that $\hat{x}(j) = \hat{x}(l)$ and $\hat{y}(k) = \hat{y}(m)$.  Since
$\hat{x}$ and $\hat{y}$ are faithful we have that $j=l$ and $k=m$.
Thus, we have shown that $\rho = \sigma$ and $F$ is
faithful.

To show that $F$ is full, we assume $(r,s,\alpha)$ and
$(r',s',\alpha')$ are objects in $XY$ and $\rho \maps
(\hat{x}(r),\hat{y}(s),\hat{z}(\alpha)) \to
(\hat{x}(r'),\hat{y}(s'),\hat{z}(\alpha'))$ is a morphism in
$\hat{X}\hat{Y}$ consisting of morphisms $j \maps \hat{x}(r) \to
\hat{x}(r')$ and $k \maps \hat{y}(s) \to \hat{y}(s')$.  Since
$\hat{x}$ and $\hat{y}$ are full, there exist morphisms $\tilde{j}
\maps r \to r'$ and $\tilde{k}\maps s \to s'$ such that
$\hat{x}(\tilde{j}) = j$ and $\hat{y}(\tilde{k}) = k$.  We consider
the following diagram:
\[
\xymatrix{
\hat{z}(f(r))\ar[r]^{v_r^{-1}} \ar[d]_{\hat{z}(f(\tilde{j}))} & \hat{f}\hat{x}(r)\ar[r]^{\hat{z}(\alpha)}\ar[d]_{\hat{f}\hat{x}(\tilde{j})} & \hat{g}\hat{y}(s)\ar[r]^{w_s}\ar[d]^{\hat{g}\hat{y}(\tilde{k})} & \hat{z}(g(s)) \ar[d]^{\hat{z}(g(\tilde{k}))} \\
\hat{z}(f(r'))\ar[r]_{v_{r'}^{-1}} & \hat{f}\hat{x}(r') \ar[r]_{\hat{z}(\alpha')} & \hat{g}\hat{y}(s')\ar[r]_{w_s} & \hat{z}(g(s'))
}
\]
The center square commutes by the assumption that $\rho$ is a morphism
in $\hat{X}\hat{Y}$, and the outer squares commute by naturality of
$v$ and $w$.  Since $\hat{z}$ is full, there exists morphisms
$\bar{\alpha}\maps f(r) \to g(s)$ and $\bar{\alpha'} \maps f(r') \to
g(s')$ such that $\hat{z}(\bar{\alpha}) = w_s\hat{z}(\alpha)v_r^{-1}$
and $\hat{z}(\bar{\alpha'}) = w_{s'}\hat{z}(\alpha')v_{r'}^{-1}$.  Now
since $\hat{z}$ is faithful, we have that
\[
\xymatrix{
f(r)\ar[r]^{\bar{\alpha}} \ar[d]_{f(\tilde{j})} & g(s) \ar[d]^{g(\tilde{k})} \\
f(r') \ar[r]_{\bar{\alpha'}} & g(s')
}
\]
commutes.  Hence, $F$ is full.

To show $F$ is essentially surjective we let $(r,s,\alpha)$
be an object in $\hat{X}\hat{Y}$.  Since $\hat{x}$ and $\hat{y}$ are
essentially surjective, there exist $\tilde{r} \in X$ and
$\tilde{s}\in Y$ with isomorphisms $\beta \maps
\hat{x}(\tilde{r}) \to r$ and $\gamma \maps \hat{y}(\tilde{s}) \to s$.
We thus have the isomorphism:
\[\hat{z}(f(\tilde{r}))\stackrel{v_{\tilde{r}^{-1}}}\longrightarrow \hat{f}(\hat{x}(\tilde{r}))\stackrel{\hat{f}(\beta)}\longrightarrow \hat{f}(r) \stackrel{\alpha}\longrightarrow \hat{g}(s) \stackrel{\hat{g}(\gamma^{-1})}\longrightarrow \hat{g}(\hat{y}(\tilde{s}))\stackrel{w_{\tilde{s}}}
\longrightarrow \hat{z}(g(\tilde{s}))\]
Since $\hat{z}$ is full, there exists an isomorphism $\mu \maps
f(\tilde{r}) \to g(\tilde{s})$ such that $\hat{z}(\mu) =
w_s\hat{g}(\gamma^{-1})\alpha\hat{f}(\beta)v_r^{-1}$.  We have
constructed an object $(\tilde{r},\tilde{s},\mu)$ in $XY$ and we need
to find an isomorphism from $F((\tilde{r},\tilde{s},\mu) =
(\hat{x}(\tilde{r}), \hat{y}(\tilde{s}),w_s^{-1}\hat{z}(\mu)v_r)$ to
$(r,s,\alpha)$.  This morphism consists of $\beta \maps
\hat{x}(\tilde{r}) \to r$ and $\gamma \maps \hat{y}(\tilde{s}) \to s$.
That this is an isomorphism follows from $\beta,\gamma$ being
isomorphisms and the following calculation:
\begin{eqnarray*}
\hat{g}(\gamma)w_s^{-1}\hat{z}(\mu)v_r &=& \hat{g}(\gamma)w_{\tilde{s}}^{-1}w_{\tilde{s}}\hat{g}(\gamma^{-1})\alpha\hat{f}(\beta)v_{\tilde{r}}^{-1}v_{\tilde{r}}\\
&=& \alpha\hat{f}(\beta)
\end{eqnarray*}
We have now shown that $F$ is essentially surjective, and
thus an equivalence of groupoids.
\end{proof}

\end{document}